\documentclass{amsart}
\usepackage{amssymb}
\usepackage{amsmath}
\usepackage{latexsym}
\usepackage{graphicx}
\usepackage{amscd}
\usepackage{eufrak}
\usepackage{mathrsfs}
\usepackage[english]{babel}
\usepackage{verbatim}

\newtheorem{theorem}[equation]{Theorem}
\newtheorem{thm}[equation]{Theorem}
\newtheorem{theorem-definition}[equation]{Theorem-Definition}
\newtheorem{lemma-definition}[equation]{Lemma-Definition}
\newtheorem{definition-prop}[equation]{Proposition-Definition}

\newtheorem{prop}[equation]{Proposition}
\newtheorem{lemma}[equation]{Lemma}
\newtheorem{cor}[equation]{Corollary}
\newtheorem{definition}[equation]{Definition}

\theoremstyle{definition}
\newtheorem{exam}[equation]{Example}

\newcommand{\llbr}{[\negthinspace[}
\newcommand{\rrbr}{]\negthinspace]}

\newcommand{\llpar}{(\negthinspace(}
\newcommand{\rrpar}{)\negthinspace)}

\theoremstyle{definition}
\newtheorem{example}[equation]{Example}

\newtheorem{rema}[equation]{Remark}
\newtheorem{remark}[equation]{Remark}

\newcommand{\LL}{\ensuremath{\mathbb{L}}}

\newcommand{\Z}{\ensuremath{\mathbb{Z}}}
\newcommand{\Q}{\ensuremath{\mathbb{Q}}}

\newcommand{\R}{\ensuremath{\mathbb{R}}}
\newcommand{\C}{\ensuremath{\mathbb{C}}}

\newcommand{\A}{\ensuremath{\mathbb{A}}}

\newcommand{\mU}{\ensuremath{\mathfrak{U}}}
\newcommand{\mV}{\ensuremath{\mathfrak{V}}}
\newcommand{\mX}{\ensuremath{\mathfrak{X}}}

\newcommand{\cA}{\ensuremath{\mathscr{A}}}
\newcommand{\cB}{\ensuremath{\mathscr{B}}}

\newcommand{\cE}{\ensuremath{\mathscr{E}}}

\newcommand{\cP}{\ensuremath{\mathscr{P}}}
\newcommand{\cT}{\ensuremath{\mathscr{T}}}
\newcommand{\cU}{\ensuremath{\mathscr{U}}}
\newcommand{\cV}{\ensuremath{\mathscr{V}}}
\newcommand{\cW}{\ensuremath{\mathscr{W}}}
\newcommand{\cX}{\ensuremath{\mathscr{X}}}
\newcommand{\cY}{\ensuremath{\mathscr{Y}}}
\newcommand{\cZ}{\ensuremath{\mathscr{Z}}}

\renewcommand{\R}{\ensuremath{\mathbb{R}}}
\renewcommand{\C}{\ensuremath{\mathbb{C}}}

\renewcommand{\A}{\ensuremath{\mathbb{A}}}

\renewcommand{\cA}{\ensuremath{\mathscr{A}}}

\renewcommand{\cU}{\ensuremath{\mathscr{U}}}

\renewcommand{\cZ}{\ensuremath{\mathscr{Z}}}
\renewcommand{\cY}{\ensuremath{\mathscr{Y}}}

\newcommand{\Spec}{\ensuremath{\mathrm{Spec}\,}}

\newcommand{\Lie}{\mathrm{Lie}}

\newcommand{\ord}{\mathrm{ord}}
\newcommand{\red}{\mathrm{red}}

\newcommand{\Var}{\mathrm{Var}}
\newcommand{\Gal}{\mathrm{Gal}}
\newcommand{\Hom}{\mathrm{Hom}}
\newcommand{\coker}{\mathrm{coker}}

\newcommand{\wt}{\mathrm{wt}}

\newcommand{\lct}{\mathrm{lct}}
\newcommand{\Sk}{\mathrm{Sk}}

\newcommand{\an}{\mathrm{an}}
\newcommand{\rig}{\mathrm{rig}}

\newcommand{\spe}{\mathrm{sp}}

\newcommand{\Sm}{\mathrm{Sm}}

\newcommand{\gro}{\mathcal{M}_k^{\widehat{\mu}}}

\newcommand{\loga}[1]{\ensuremath{\mathscr{#1}^{\dagger}}}

\numberwithin{equation}{subsection}

\newcommand{\sss}{\vspace{5pt} \subsubsection*{ }\refstepcounter{equation}{{\bfseries(\theequation)}\ }}

\hyphenpenalty=6000 \tolerance=10000

\begin{document}
\title{Motivic zeta functions of degenerating Calabi-Yau varieties}

\author[Lars Halvard Halle]{Lars Halvard Halle}
\address{University of Copenhagen\\
Department of Mathematical Sciences\\
Universitetsparken 5\\
2100 Copenhagen\\
Denmark}
\email{larshhal@math.ku.dk}

\author[Johannes Nicaise]{Johannes Nicaise}
\address{Imperial College\\
Department of Mathematics\\ South Kensington Campus \\
London SW7 2AZ, UK} \email{j.nicaise@imperial.ac.uk}

\begin{abstract}
We study motivic zeta functions of degenerating families of Calabi-Yau varieties. Our main result says that they satisfy an analog of Igusa's monodromy conjecture
if the family has a so-called Galois equivariant Kulikov model; we provide several classes of examples where this condition is verified. We also establish a close relation between the zeta function and the skeleton that appeared in Kontsevich and Soibelman's non-archimedean interpretation of the SYZ conjecture in mirror symmetry.
\end{abstract}

\maketitle

\section{Introduction}

Let $X$ be a geometrically connected, smooth and proper variety over $ K = \mathbb{C}\llpar t\rrpar $. We say that $X$ is \emph{Calabi-Yau} if the canonical line bundle $\omega_X$ is trivial. For every volume form $\omega$ on $X$, one can associate to the pair $ (X,\omega) $ an invariant $Z_{X,\omega}(T)$ called the motivic zeta function. This is a formal power series in $T$ with coefficients in a certain Grothendieck ring of varieties, which encodes a wealth of information about the degeneration of $X$ at $t=0$.
 It arose as a natural analog of Denef and Loeser's motivic zeta function for hypersurface singularities; the precise relation is explained in \cite{HaNi-CY}.

 The main open question regarding Denef and Loeser's motivic zeta functions for hypersurface singularities is the so-called \emph{monodromy conjecture}, which predicts that each pole of the zeta function should correspond to a local monodromy eigenvalue in a precise way. This intriguing conjecture has motivated a substantial amount of interesting work, and has been proved in some significant cases, but is still wide open in general.
  It is natural to wonder whether there exists a similar relation between motivic zeta functions and monodromy eigenvalues also in the context of degenerating Calabi-Yau varieties. We say that $X$ satisfies the \emph{Monodromy Property} if each pole of $Z_{X,\omega}(T)$ gives rise to a monodromy eigenvalue (see Definition \ref{def:MP} for a precise statement). We proved in \cite{HaNi} that this is indeed the case when $X$ is an abelian variety; in fact, we showed that  the zeta function of an abelian variety has only one pole, which can be explicitly related to a certain arithmetic invariant of the abelian variety (Chai's base change conductor).

   In the present article, we investigate to what extent these results generalize to arbitrary Calabi-Yau varieties. The problem becomes much more intricate because the techniques for abelian varieties (N\'eron models) are no longer available, and the zeta functions tend to have more poles than in the abelian case. In fact, very few examples have been studied beyond the abelian case, and this paper is the first systematic effort to understand the properties of zeta functions of Calabi-Yau varieties that are not abelian (a few results have been announced without proofs in our survey paper \cite{HaNi-CY}).
 We also refine our results in the abelian case to take the monodromy action on the zeta function into account, which was ignored in \cite{HaNi}. This requires some surprisingly subtle new results on Galois actions on N\'eron models.

 It turns out that the main properties of the unique pole of the zeta function of an abelian variety remain valid for the {\em largest} pole of the zeta function of a general Calabi-Yau variety, although new techniques need to be used for the proofs. In particular, we use Hodge theory to prove in Theorem \ref{thm:hodge} that the largest pole always gives rise to a monodromy eigenvalue, as predicted by the monodromy conjecture.
 We also explain the close relation that exists between this largest pole and the properties of the {\em essential skeleton} of $X$, an object that was introduced by Kontsevich and Soibelman in their non-archimedean interpretation of the SYZ fibration in mirror symmetry \cite{KS}. Specifically, we prove in Theorem \ref{thm:largest} that the order of this pole is equal to one plus the dimension of the essential skeleton, and the pole itself is equal to one minus the minimal value of the {\em weight function} associated with $\omega$, the function whose locus of minimal values is, by definition, the essential skeleton.

 Some of the key properties of N\'eron models that were used in \cite{HaNi} were the triviality of the relative canonical line bundle, their Galois-equivariance
 (due to their canonical nature) and Grothendieck's Semi-Stable Reduction Theorem.
 As a (partial) replacement for N\'eron models, we consider in Section \ref{sec:kulikov} another type of Galois equivariant minimal models. More precisely, we assume that our Calabi-Yau variety $X$ admits, after a finite extension of the base field $K$, a proper and regular model $\mathscr{Y} $ over the valuation ring such that $\mathscr{Y}_k$ is a normal crossings divisor, the logarithmic relative canonical bundle is trivial and the natural Galois action in the generic fiber extends to $\mathscr{Y} $. Inspired by Kulikov's seminal work on semi-stable models of degenerating K3 surfaces, we say that $\mathscr{Y}$ is an \emph{equivariant Kulikov model} of $X$.

Theorem \ref{thm:main} asserts that if $X$ admits an equivariant Kulikov model $\mathscr{Y} $, then $Z_{X,\omega}(T)$ has a unique pole. Combined with Theorem \ref{thm:hodge}, this implies that $X$ has the Monodromy Property. Let us briefly indicate the main ideas of our approach. First we use techniques due to Gabber (see \cite{gabber}) in order to modify $\mathscr{Y} $ into a model on which the Galois action is \emph{very tame}, by endowing $\cY$ with a suitable log structure. The significance of having a very tame action is that the quotient (in the category of log schemes) forms a log smooth $R$-model of $X$. The motivic zeta function can then be computed by means of the explicit formula obtained by Bultot and the second author \cite{BuNi}. In practice, some refinement of the above mentioned approach is needed: to obtain maximal flexibility, it is natural to allow Kulikov models to be algebraic spaces (as in Kulikov's work). This makes it necessary to reformulate some of Gabber's results in the framework of algebraic spaces. We moreover upgrade a few fundamental constructions in motivic integration to algebraic spaces. These results are presented separately, as an appendix, in Section \ref{sec:appendix}, and should be of interest also outside the applications in this paper. In particular, we answer a question by Stewart and Vologodsky.

It should be pointed out that it is not always true that a Calabi-Yau variety $X$ admits an equivariant Kulikov model, even in the case of K3 surfaces. Indeed, it is an immediate consequence of Theorem \ref{thm:main} that as soon as the motivic zeta function has more than one pole, such models for $X$ can not exist after any base extension. However, contrary to what one finds in the abelian case, motivic zeta functions of Calabi-Yau varieties can very well have more than one pole. For instance, in  \S\ref{ex:nokulikov}, we give a basic example of a quartic K3 surface with potential good reduction where the motivic zeta function has two poles (but still satisfies the Monodromy Property).
 In other words, the algebraic complexity of the series $Z_{X,\omega}(T)$ imposes an obstruction for the existence of minimal models of $X$ carrying `extra' symmetry. It is an interesting problem in itself to understand when a given Calabi-Yau variety $X$ admits an equivariant Kulikov model. The reader will find a discussion on related matters in the context of good reduction of K3 surfaces in \cite{liedtke-matsumoto}, but, to our best knowledge, the literature on this question is sparse. In Section \ref{sec:examples}, we discuss a few examples of Calabi-Yau varieties with potential good reduction where an equivariant Kulikov model can indeed be found.

 To conclude the introduction, we give a brief overview of the structure of the paper.
 In Section \ref{sec:motzeta} we fix our notations and present some preliminary material on motivic zeta functions. In Section \ref{sec:largest}
 we study the largest pole of the zeta function, in particular, the relation with the essential skeleton and with Hodge theory, and we prove that it always satisfies the Monodromy Property. Section \ref{sec:abelian} contains the refinements of our results for abelian varieties that are necessary to take the monodromy action into account; we also prove that the essential skeleton coincides with Berkovich's skeleton for abelian varieties using the interpretation of the weight function by M.~Temkin.
 The heart of the paper is Section \ref{sec:kulikov}, where we define equivariant Kulikov models and prove that the zeta function has a unique pole whenever such a model exists, and thus satisfies the Monodromy Property.
  In Section \ref{sec:examples}, we construct various interesting examples where equivariant Kulikov models indeed exist. Finally, Section \ref{sec:appendix} is an appendix containing the technical results on motivic integration on algebraic spaces that are needed elsewhere in the paper.

\subsection*{Acknowledgements} We are grateful to David Rydh for useful discussions and for suggesting the approach in Section \ref{subsec-equidef}.
 We are also endebted to Chenyang Xu and Klaus K{\"u}nnemann for answering our questions on minimal models for $K3$ surfaces and Mumford models of abelian varieties, respectively. JN is supported by the ERC Starting Grant MOTZETA (project 306610) of the European Research Council.

\section{Motivic zeta functions}\label{sec:motzeta}

\subsection{Preliminaries}
 \sss We set $k=\C$,  $R=k\llbr t\rrbr$ and $K=k\llpar t\rrpar$. We denote by $\mathrm{ord}_t:K^{\times}\to \Z$ the $t$-adic valuation on $K$, and we define an absolute value
 $|\cdot|_K$ on $K$ by setting $|a|_K=\exp(-\mathrm{ord}_ta)$ for every $a\in K^{\times}$. This turns $K$ into a non-archimedean complete valued field.
  We denote by $(\cdot)^{\an}$ the analytification functor from the category of $K$-schemes of finite type to Berkovich's category of $K$-analytic spaces.
 We set $S=\Spec R$ and we will denote by $S^{\dagger}$ the log scheme that we obtain by endowing $S$ with its standard log structure (the divisorial log structure induced by the closed point). Unless explicitly stated otherwise, all the log structures in this paper are \'etale.

\sss For every positive integer $d$, we set $R(d)=k\llbr \sqrt[d]{t}\rrbr$ and $K(d)=k\llpar \sqrt[d]{t}\rrpar$. The union of the fields $K(d)$ is an algebraic closure of $K$ that we denote by $K^a$.
 We denote by $\mu_d$ the group of $d$-th roots of unity in $k$ and by
$$\widehat{\mu}=\lim_{\longleftarrow}\mu_d$$ the profinite group of roots of unity in $k$.
 The field $K(d)$ is a Galois extension of $K$ and its Galois group is canonically isomorphic to $\mu_d$.
 We let $\mu_d$ act on $K(d)$ from the right via the inverse of the Galois action over $K$:
 $$K(d)\times \mu_d\to K(d):(\psi(\sqrt[d]{t}),\zeta)\mapsto \psi(\zeta^{-1}\sqrt[d]{t}).$$
  We denote by $\sigma$ the canonical topological generator of the inertia group $\Gal(K^a/K)\cong \widehat{\mu}$, that is, $\sigma=(\exp(2\pi i/d))_{d>0}$. We call the element $\sigma$ the {\em monodromy operator}.

\sss Let $X$ be a proper $K$-scheme.
 A model for $X$ over $R$ is a flat $R$-scheme $\cX$ endowed with an isomorphism of $K$-schemes $\cX_K\to X$.
 If $X$ is smooth over $K$, we say that $\cX$ is an snc-model for $X$ if it is regular and proper over $R$, and the special fiber $\cX_k$ is a strict normal crossings divisor on $\cX$. Such a model always exists, by Hironaka's resolution of singularities. We say that $X$ has semi-stable reduction if it has an snc-model with reduced special fiber; such a model is called a semi-stable model of $X$. For every proper model $\cX$ over $R$, there exist a positive integer $d$ and a semi-stable model $\cX'$ of $X\times_K K(d)$ that dominates $\cX\times_R R(d)$, by the semi-stable reduction theorem \cite[Ch4\S3]{KKMS}. If $X$ is projective, then we can take $\cX'$ to be projective over $R(d)$, as well.

\sss Let $X$ be a proper $K$-scheme. Then, for every $i\geq 0$, there exists a canonical $\Q$-vector space $H^i(X\times_K K^a,\Q)$ endowed with a quasi-unipotent action of $\sigma$
such that for every prime $\ell$, the tensor product with $\Q_\ell$ is canonically isomorphic to the $\ell$-adic cohomology space $H^i(X\times_K K^a,\Q_\ell)$ endowed with the Galois action of $\sigma$. This follows from Berkovich's theory of \'etale cohomology with $\Z$-coefficients for analytic spaces over $K$ \cite[7.1.1]{berk-Z}, which relies on Kato and Nakayama's theory of Betti cohomology for logarithmic complex analytic spaces \cite{KaNa}.
 Berkovich's construction provides the following comparison result with complex analytic nearby cohomology. Let $R_0$ be the ring of germs of holomorphic functions on $\C$ at the origin, and denote its fraction field by $K_0$.
  We choose a uniformizer $t$ in $R_0$. This choice determines an isomorphism between $R$ and the completion of $R_0$.
  If $X$ is defined over $K_0$ and $\cX$ is a proper $R_0$-model for $X$, then $H^i(X\times_{K_0} K^a,\Q_\ell)$ is canonically isomorphic to
 $$\mathbb{H}^i(\cX_k(\C),R\psi_{\cX}(\Q))$$
 where $R\psi_{\cX}(\Q)$ is the complex analytic nearby cycles complex associated with $\cX$.
 Under this isomorphism, the action of $\sigma$ corresponds to the monodromy action on $\mathbb{H}^i(\cX_k(\C),R\psi_{\cX}(\Q))$.

 \sss All algebraic spaces are assumed to be separated and Noetherian. In Section \ref{sec:kulikov} we will need to work with log algebraic spaces; we will freely use the definitions and results for log schemes if they carry over to log algebraic spaces by standard descent arguments. For instance, we can speak about regular  log algebraic spaces and smooth morphisms of log algebraic spaces because these notions are local with respect to the \'etale topology.
   If additional care is required in the context of algebraic spaces, this will be clearly explained in the text.

\subsection{Galois-equivariant motivic integrals}\label{ss:motint}
\sss Unless explicitly stated otherwise, we assume that groups act on schemes from the left. For every positive integer $d$, we say that an action of $\mu_d$ on a $k$-scheme of finite type $X$ is {\em good} if
 $X$ has a finite partition into $\mu_d$-stable affine subschemes.
    We say that an action of $\widehat{\mu}$ on $X$ is good if it factors through a good action of $\mu_d$ on $X$, for some $d>0$.
 The Grothendieck group $K^{\widehat{\mu}}_0(\Var_k)$ of $k$-varieties with $\widehat{\mu}$-action is the abelian group defined by the following presentation:
 \begin{itemize}
\item {\em Generators:} Isomorphism classes $[X]$ of $k$-schemes of finite type $X$ endowed with a good action of $\widehat{\mu}$; here the isomorphism class is taken with respect to $\widehat{\mu}$-equivariant isomorphisms.
\item{\em Relations:}
\begin{enumerate}
\item If $X$ is a $k$-scheme of finite type with good $\widehat{\mu}$-action and $Y$ is a closed subscheme of $X$ that is stable under the $\widehat{\mu}$-action, then
$$[X]=[Y]+[X\setminus Y].$$
\item If $X$ is a $k$-scheme of finite type with good $\widehat{\mu}$-action and $A\to X$ is an affine bundle of rank $r$ endowed with an affine lift of the $\widehat{\mu}$-action on $X$, then
$$[A]=[\A^r_{k}\times_{k} X]$$ where $\widehat{\mu}$ acts trivially on $\A^r_{k}$.
\end{enumerate}
\end{itemize}
 Here, ``affine lift'' means that we have an action of $\widehat{\mu}$ on $A$ and a $\widehat{\mu}$-linear action on the underlying vector bundle $V$ of translations of $A$ such that the morphism $A\to X$ and the action of $V$ on $A$ are $\widehat{\mu}$-equivariant.
  We define a ring structure on $K^{\widehat{\mu}}_0(\Var_k)$ by means of the multiplication rule $[X]\cdot [X']=[X\times_k X']$ where $\widehat{\mu}$ acts diagonally on $X\times_k X'$. We write $\LL$ for the class of the affine line $\A^1_{k}$ with trivial $\widehat{\mu}$-action and we set $\mathcal{M}^{\widehat{\mu}}_k=K^{\widehat{\mu}}_0(\Var_k)[\LL^{-1}]$.

\begin{remark} Our definition of a good $\mu_d$-action on $X$ is weaker than the one that is commonly used, namely, that $X$ can be covered by $\mu_d$-stable affine open subschemes. Our definition has the advantage that it can be generalized to algebraic spaces, and it gives rise to the same equivariant Grothendieck ring as the usual definition.
\end{remark}

 \sss Let $X$ be a smooth and proper $K$-scheme. For every $d>0$, we set $X(d)=X\times_K K(d)$. The group $\mu_d$ acts on $X(d)$ from the left.
 An {\em equivariant weak N\'eron model} for $X(d)$ is a separated smooth $R(d)$-scheme $\cY$, endowed with a good $\mu_d$-action and a $\mu_d$-equivariant isomorphism of
 $K(d)$-schemes $\cY_{K(d)}\to X(d)$, such that the natural map $\cY(R(d))\to X(K(d))$ is a bijection. Such an equivariant weak N\'eron model always exists: starting from
 a proper $R$-model $\cX$ of $X$, we can apply the smoothening algorithm in the proof of \cite[3.4.2]{BLR} to $\cX\times_R R(d)$. Inspecting the different steps of the algorithm, one sees that it produces a morphism of $R(d)$-schemes $\cY'\to \cX\times_R R(d)$ that is a finite composition of $\mu_d$-equivariant blow-ups with centers in the special fiber such that the $R(d)$-smooth locus $\cY$ of $\cY'$ is an equivariant weak N\'eron model of $X(d)$. Note that the $\mu_d$-action on $\cY$ is good because the morphism
 $\cY\to \cX\times_R R(d)$ is quasi-projective.

 \sss Now assume that the canonical line bundle of $X$ is trivial, and let $\omega$ be a volume form on $X$. We write $\omega(d)$ for the pullback of $\omega$ to $X(d)$.
 For every connected component $C$ of $\cY_k$, we denote by $\ord_{C}\omega(d)$ the unique integer $a$ such that $t^{-a/d}\omega$ is a generator
 for $\omega_{\cY/R(d)}$ locally at the generic point of $C$. For every integer $i$, we denote by $C(i)$ the union of the connected components $C$ of $\cY_k$
  such that $\ord_{C}\omega(d)=i$. This union is stable under the action of $\mu_d$ on $\cY_k$, because $\omega$ is defined over $K$.

 \begin{prop}
 The element
  \begin{equation}\label{eq:motint}
  \sum_{i\in \Z}[C(i)]\LL^{-i}
  \end{equation}
  of $\mathcal{M}_k^{\widehat{\mu}}$ does not depend on the choice of the equivariant weak N\'eron model $\cY$ of $X(d)$.
 \end{prop}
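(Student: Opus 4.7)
The plan is the classical strategy in motivic integration, adapted to the $\widehat{\mu}$-equivariant setting: reduce to a common domination by a third equivariant weak N\'eron model, and then apply an equivariant change-of-variables formula.

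For the first step, given two equivariant weak N\'eron models $\cY_1, \cY_2$ of $X(d)$, I would take $\cZ \subset \cY_1 \times_{R(d)} \cY_2$ to be the scheme-theoretic closure of the graph of the generic-fiber identification $\cY_{1,K(d)} \xrightarrow{\sim} X(d) \xleftarrow{\sim} \cY_{2,K(d)}$. Since these identifications are $\mu_d$-equivariant, the diagonal $\mu_d$-action on the product preserves $\cZ$, which is therefore a proper equivariant $R(d)$-model of $X(d)$ with equivariant projections to each $\cY_i$. Applying the equivariant smoothening algorithm of \cite[3.4.2]{BLR} exactly as in the existence paragraph preceding the statement produces a common equivariant weak N\'eron domination $\cY_3$ together with $\mu_d$-equivariant $R(d)$-morphisms $\pi_i: \cY_3 \to \cY_i$. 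Hence it suffices to verify the invariance of (\ref{eq:motint}) under passage along a single $\mu_d$-equivariant $R(d)$-morphism $h: \cY' \to \cY$ of equivariant weak N\'eron models that restricts to the identity on generic fibers.

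For such $h$, decompose $\cY_k = \sqcup_\beta C_\beta$ and $\cY'_k = \sqcup_\alpha C'_\alpha$ into connected components; smoothness of both sides forces each $C'_\alpha$ into a unique $C_{\beta(\alpha)}$. The non-negative integer $\nu_\alpha := \ord_{C'_\alpha}\omega(d) - \ord_{C_{\beta(\alpha)}}\omega(d)$ equals the order along $C'_\alpha$ of the Jacobian of $h$. Independence then follows from the contribution-wise identity
\[
\sum_{\alpha \,:\, \beta(\alpha) = \beta} [C'_\alpha]\,\LL^{-\ord_{C'_\alpha}\omega(d)} \;=\; [C_\beta]\,\LL^{-\ord_{C_\beta}\omega(d)}
\]
in $\mathcal{M}_k^{\widehat{\mu}}$, summed over $\beta$. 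This identity is the equivariant motivic change-of-variables formula: the weak N\'eron property implies that $h(R(d)): \cY'(R(d)) \to \cY(R(d))$ is bijective, so at the level of sufficiently high jet truncations, and after an \'etale-local stratification of $C_\beta$, the portion of $\sqcup_\alpha C'_\alpha$ lying over each stratum is a $\mu_d$-equivariant affine bundle of rank $\nu_\alpha$. Relation (2) in the equivariant Grothendieck ring then absorbs this factor as $\LL^{\nu_\alpha}$, which combined with $\ord_{C'_\alpha}\omega(d) = \ord_{C_{\beta(\alpha)}}\omega(d) + \nu_\alpha$ yields the formula.

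The main obstacle I anticipate is the equivariant verification of the affine-bundle structure appearing in the change-of-variables step: one needs the $\mu_d$-action on the relevant fibers of $h_k$ to be genuinely \emph{affine} in the sense required by relation (2), not merely algebraic. This requires the $\mu_d$-equivariance of $h$ combined with an \'etale-local linearization of the smooth equivariant morphism $h$ that is compatible with the $\mu_d$-action, and it is the technical heart of the equivariant motivic change-of-variables formula.
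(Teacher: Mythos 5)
Your proposal takes essentially the same approach as the paper: dominate two equivariant weak N\'eron models by applying the equivariant smoothening algorithm to the schematic closure of the diagonal of $X(d)$ inside their product, then invoke an equivariant change-of-variables formula. The paper dispatches the ``main obstacle'' you flag --- the $\mu_d$-equivariant affine-bundle structure at the jet level --- by simply citing the equivariant change-of-variables formula of Hartmann~\cite[6.4]{hartmann} rather than reproving it.
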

 \begin{proof}
 Any two equivariant weak N\'eron models of $X(d)$ can be dominated by a third (apply the smoothening algorithm of \cite[3.4.2]{BLR}
 to the schematic image of the diagonal embedding of $X(d)$ in the product of the two models). Thus the result follows from the change of variables formula for equivariant
 motivic integrals: see \cite[6.4]{hartmann}.
 \end{proof}

 \begin{definition}
 We denote the element \eqref{eq:motint} by
 $$\int_{X(d)}|\omega(d)| \quad \in \mathcal{M}_k^{\widehat{\mu}}$$
 and call it the motivic integral of $\omega(d)$ on $X(d)$.
 \end{definition}

\subsection{Motivic zeta functions}

\begin{definition}
 Let $X$ be a geometrically connected, smooth and proper $K$-scheme with trivial canonical line bundle, and let $\omega$ be a volume form on $X$.
 We define the motivic zeta function of the pair $(X,\omega)$ as
 $$Z_{X,\omega}(T)=\sum_{d>0}\left(\int_{X(d)}|\omega(d)|\right)T^d\quad \in \mathcal{M}_k^{\widehat{\mu}}\llbr T\rrbr.$$
\end{definition}

\sss \label{sss:snc} This motivic zeta function can be explicitly computed in the following way. Let $\cX$ be an snc-model of $X$. We write
 $\cX_k=\sum_{i\in I}N_i E_i.$
  The volume form $\omega$ defines a rational section of the logarithmic relative canonical line bundle $\omega_{\cX/R}(\cX_{k,\red}-\cX_k)$ of $\cX$.
   We denote the associated Cartier divisor by $\mathrm{div}_{\cX}(\omega)$. It is supported on $\cX_k$; we write it as
   $\mathrm{div}_{\cX}(\omega)=\sum_{i\in I}\nu_i E_i.$
    For every non-empty subset $J$ of $I$, we set $$E_J=\bigcap_{j\in J}E_j,\quad E_J^o=E_J\setminus \left(\bigcup_{i\notin J}E_i\right).$$
 We set $N_J=\gcd\{N_j\,|\,j\in J\}$ and we denote by $\cX(N_J)$ the normalization of $\cX\times_R R(N_J)$. Then the
 scheme $\widetilde{E}^o_J=\cX(N_J)\times_{\cX} E_J^o$ is a Galois cover of $E_J^o$ that is described explicitly in \cite[2.3]{Ni-tameram}.
  The group $\mu_{N_J}$ acts on $\widetilde{E}^o_J$ {\em via} its action on $R(N_J)$.

\begin{theorem}\label{thm-snc}
With the above notations, we have
$$Z_{X,\omega}(T)=\sum_{\emptyset \neq J\subset I}[\widetilde{E}^o_J](\LL-1)^{|J|-1}\prod_{j\in J}\frac{\LL^{-\nu_j}T^{N_j}}{1-\LL^{-\nu_j}T^{N_j}}\quad \in \mathcal{M}_k^{\widehat{\mu}}\llbr T\rrbr.$$
\end{theorem}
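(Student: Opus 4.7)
The plan is to identify, for every $d\geq 1$, the coefficient of $T^d$ on both sides in $\mathcal{M}_k^{\widehat{\mu}}$. Expanding the geometric series on the right-hand side gives
$$\sum_{\emptyset \neq J\subset I}[\widetilde{E}_J^o](\LL-1)^{|J|-1}\sum_{\substack{\alpha\in\Z_{>0}^J\\ \sum_j\alpha_jN_j=d}}\LL^{-\sum_j\alpha_j\nu_j},$$
so it is enough, for each $d$, to construct an equivariant weak N\'eron model $\cY_d$ of $X(d)$ whose connected components in the special fibre are parametrised by the pairs $(J,\alpha)$ appearing in this double sum, with each component carrying $\widehat{\mu}$-equivariant class $[\widetilde{E}_J^o](\LL-1)^{|J|-1}$ in $\mathcal{M}_k^{\widehat{\mu}}$ and $\omega(d)$-order equal to $\sum_j\alpha_j\nu_j$.

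To produce $\cY_d$, I would take the $R(d)$-smooth locus of an explicit toric resolution of the normalisation of $\cX\times_R R(d)$. Working \'etale-locally near a point of $E_J^o$, pick coordinates $x_j$ ($j\in J$) with $E_j=V(x_j)$ together with coordinates transverse to $E_J^o$, so that $t=u\prod_jx_j^{N_j}$ for some unit $u$. Pulling back to $R(d)$ gives $s^d=u\prod_jx_j^{N_j}$ with $s=\sqrt[d]{t}$. The associated toric cone admits a $\mu_d$-equivariant smooth subdivision whose maximal simplices are precisely parametrised by the tuples $\alpha\in\Z_{>0}^J$ with $\sum_j\alpha_jN_j=d$: the $\alpha$-chart arises via the substitution $x_j=s^{\alpha_j}x'_j$, which converts the equation into the $R(d)$-smooth hypersurface $\prod_j(x'_j)^{N_j}=u^{-1}$. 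Gluing these local charts along the construction of \cite[2.3]{Ni-tameram} and extracting the $R(d)$-smooth locus yields $\cY_d$.

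The special fibre of the $\alpha$-chart above a patch of $E_J^o$ is a torsor under $T:=\ker\bigl(\G_m^J\xrightarrow{(N_j)_j}\G_m\bigr)$; its connected components correspond to the $N_J$-th roots of $u|_{E_J^o}$ and globally assemble into the sheets of the Galois cover $\widetilde{E}_J^o\to E_J^o$ of \cite[2.3]{Ni-tameram}. Consequently the $(J,\alpha)$-component of $(\cY_d)_k$ is, as a $k$-variety, a trivial $\G_m^{|J|-1}$-bundle over $\widetilde{E}_J^o$, yielding the underlying class $[\widetilde{E}_J^o](\LL-1)^{|J|-1}$. A parallel local calculation, using the expression of $\omega$ as a rational section of $\omega_{\cX/R}(\cX_{k,\red}-\cX_k)$, shows that the substitution $x_j=s^{\alpha_j}x'_j$ contributes a factor $s^{\alpha_j\nu_j}$ in each direction, so $\ord_C\omega(d)=\sum_j\alpha_j\nu_j$.

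The main obstacle will be the delicate bookkeeping of the $\mu_d$-equivariant structure on each component. The substitution $x_j=s^{\alpha_j}x'_j$ a priori twists the $\mu_d$-action on $x'_j$ by $\zeta\mapsto\zeta^{-\alpha_j}$, inducing a translation—rather than the trivial—action on the fibre torus. One must verify that once the sheet-permutation has been separated out via the cover $\widetilde{E}_J^o$ (on which $\mu_d\twoheadrightarrow\mu_{N_J}$ acts by the canonical deck transformations coming from $R(N_J)\subset R(d)$), the residual translation becomes trivial in $\mathcal{M}_k^{\widehat{\mu}}$, either by exhibiting a reparametrisation of the torus factor that absorbs the twist into the $\widetilde{E}_J^o$-factor, or by invoking the class identities for translation actions on tori in the equivariant Grothendieck ring. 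Granted this, the independence of the motivic integral from the choice of equivariant weak N\'eron model (\S\ref{ss:motint}) assembles the local contributions into the asserted global formula, and the geometric-series identity $\sum_{\alpha\geq 1}(\LL^{-\nu_j}T^{N_j})^\alpha=\LL^{-\nu_j}T^{N_j}/(1-\LL^{-\nu_j}T^{N_j})$ reconstitutes the right-hand side as a product.
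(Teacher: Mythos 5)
The paper's own proof is a two-line citation: the non-equivariant version of the formula is taken from \cite[7.7]{NiSe}, and the equivariant refinement is \cite[6.2.1]{BuNi}. You take a genuinely different route by reconstructing a direct, self-contained argument: comparing coefficients of $T^d$, building explicit equivariant weak N\'eron models of $X(d)$ from charts indexed by contact-order tuples $\alpha$, and reading off the class and the $\omega(d)$-order on each component. This is in effect a reprise of the reasoning underlying the cited results rather than a new mechanism, but it has the merit of isolating exactly where the non-trivial content lies.

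Your outline is sound, and you correctly flag the key difficulty: separating the $\mu_d$-action on a component into a sheet permutation on $\widetilde{E}^o_J$ plus a residual translation on the torus factor, and showing the translation is invisible in $\mathcal{M}_k^{\widehat{\mu}}$. This is precisely the substance of \cite[6.2.1]{BuNi}, and you leave it at ``one must verify''. For what it is worth, your second suggested route does work: the residual translation lies in $T[d]$, where $T=\ker\bigl(\G_m^J\to\G_m\bigr)$; splitting $T\cong\G_m^{|J|-1}$ turns it into multiplication by a fixed root of unity on each factor, i.e.\ a linear $\widehat{\mu}$-action on a punctured line bundle, and the affine-bundle relation in $K_0^{\widehat{\mu}}(\Var_k)$ together with the scissor relations (as used in the proof of Proposition~\ref{prop:chevalley}) then trivializes the twist. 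But carrying this out globally --- compatibly with a $\mu_d$-equivariant trivialization of the $T$-torsor over $\widetilde{E}^o_J$ and consistently across charts --- is exactly the bookkeeping that \cite{BuNi} does, and without it your argument is not yet a proof.

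One inaccuracy of phrasing: the tuples $\alpha\in\Z_{>0}^J$ with $\sum_j\alpha_j N_j=d$ do not parametrize the maximal cones of a smooth subdivision of the toric cone. What they parametrize is a stratification of the $R(d)$-points: any $R(d)$-section of $\cX\times_R R(d)$ reducing into $E^o_J$ has well-defined contact orders $\alpha_j\geq 1$ with $\sum_j\alpha_jN_j=d$, and your substitution $x_j=s^{\alpha_j}x'_j$ produces a smooth chart covering precisely the sections with those orders. The disjoint union of these charts (over all $J$ and all admissible $\alpha$) is a weak N\'eron model because every $R(d)$-point lands in exactly one chart; this is the ``N\'eron smoothening'' viewpoint, and it is a cleaner justification for your construction than the toric-fan language, which is misleading as stated.
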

\begin{proof}
If we forget the $\widehat{\mu}$-action, this follows from \cite[7.7]{NiSe} (where a different normalization of the motivic measure was used, resulting in the extra factor $\LL^{-\mathrm{dim}(X)}$). The formula with $\widehat{\mu}$-action follows from \cite[6.2.1]{BuNi}.
\end{proof}

\sss We have considered a motivic generating series similar to $Z_{X,\omega}(T)$ (for a specific choice of $\omega$ and forgetting the $\widehat{\mu}$-action) in \cite[\S2.4]{HaNi} and \cite[\S6]{HaNi-CY}.
 It is explained in \cite{HaNi} how $Z_{X,\omega}(T)$ can be viewed as an analog of Denef and Loeser's motivic zeta function of a hypersurface singularity \cite{DL-barc}, based on the results in \cite{NiSe}.
 The most important problem about Denef and Loeser's motivic zeta function is the so-called {\em monodromy conjecture}, which predicts a precise relation between poles of the zeta function and local monodromy eigenvalues of the hypersurface \cite[4.17]{HaNi-CY}. It is natural to ask if the analogous property holds for $Z_{X,\omega}(T)$. The following formulation is a refinement of \cite[2.6]{HaNi} and \cite[6.9]{HaNi-CY} (adding the $\widehat{\mu}$-action).

 \begin{definition}\label{def:MP}
    We say that $X$ satisfies the Monodromy Property if there exists a finite set $S$ of rational numbers such that
 $Z_{X,\omega}(T)$ belongs to the subring
 $$\gro \left[T,\frac{1}{1-\LL^a T^b}\right]_{(a,b)\in \Z\times \Z_{>0},\,a/b\in S}$$
 of $\gro\llbr T\rrbr$ and such that, for every $s\in S$, the number $\exp(2\pi i s)$ is an eigenvalue of $\sigma$ on
 $$H^i(X\times_K K^a,\Q)$$ for some $i\geq 0$.
  \end{definition}

\sss This property does not depend on the volume form $\omega$: it follows immediately from the definition of the motivic zeta function that
\begin{equation}\label{eq:rescale}
Z_{X,c\omega}(T)=Z_{X,\omega}(\LL^{-\mathrm{ord}_tc}T)
\end{equation} for every element $c$ of $K^{\times}$.
 The main result of \cite{HaNi} states that $X$ satisfies the Monodromy Property if $X$ is an abelian variety and we forget the $\widehat{\mu}$-action on $Z_{X,\omega}(T)$, but otherwise, little is known.
  We will prove in Section \ref{sec:kulikov} that $X$ satisfies the Monodromy Property if it has a so-called {\em equivariant Kulikov model}.
  This condition is not always satisfied (see Example \ref{ex:nokulikov}) but we do not know any example where $X$ does not satisfy the Monodromy Property.

\sss \label{sss:acampo} In order to check the Monodromy Property in concrete examples, it is often useful to use a variant of A'Campo's formula for the monodromy zeta function that was proven in \cite[2.6.2]{Ni-tameram}. Let $X$ be a smooth and proper $K$-scheme. Let $\cX$ be an snc-model for $X$, with $\cX_k=\sum_{i\in I}N_i E_i$, and define $E_i^o$ as in \eqref{sss:snc}. Then
$$\prod_{n\geq 0} \det(t\cdot \mathrm{Id}-\sigma\,|\,H^n(X\times_K K^a,\Q))^{(-1)^{n+1}}=\prod_{i\in I}(t^{N_i}-1)^{-\chi(E_i^o)}$$
where $\chi$ is the topological Euler characteristic.  The left hand side of this expression is called the {\em monodromy zeta function} of $X$.

 \section{The largest pole}\label{sec:largest}
\subsection{The weight function and the essential skeleton}
\sss \label{sss:ess} Let $X$ be a geometrically connected, smooth and proper $K$-scheme with trivial canonical line bundle, and let $\omega$ be a volume form on $X$.
 The {\em essential skeleton} $\Sk(X)$ of $X$ was constructed in \cite{KS} to serve as the base of their non-archimedean SYZ fibration in the theory of mirror symmetry.
  The construction was further developed in \cite{MuNi}, \cite{NiXu} and \cite{NiXu2}. We will briefly summarize some of its main properties.
  To the volume form $\omega$, one can associate a so-called {\em weight function} \cite[\S4]{MuNi}
$$\wt_{\omega}:X^{\an}\to \R\cup \{+\infty\}.$$
 The essential skeleton $\Sk(X)$ is the subspace of $X^{\an}$ consisting of the points where $\wt_{\omega}$ takes its minimal value; we will call this minimal value the minimal  weight of $\omega$ on $X$ and denote it by $\min(\omega)$. The essential skeleton does not depend on the choice of $\omega$ because $\wt_{a\omega}=\wt_{\omega}+\mathrm{ord}_t(a)$ for every $a\in K^\times$.
  It can be computed in the following way. Let $\cX$ be an snc-model of $X$ over $R$.
  We write $\cX_k=\sum_{i\in I}N_i E_i$ and $\mathrm{div}_{\cX}(\omega)=\sum_{i\in I}\nu_i E_i$ as in \eqref{sss:snc}.
 Then the dual intersection complex of $\cX_k$ can be embedded into $X^{\an}$ in a canonical way; it is called the {\em Berkovich skeleton} of the model $\cX$ and denoted by $\Sk(\cX)$ (see for instance \cite[\S3]{MuNi}). The weight function $\wt_\omega$ is affine on every face of $\Sk(\cX)$, and its value at the vertex $v_i$ corresponding to an irreducible component $E_i$ of $\cX_k$ is given by $(\nu_i/N_i)+1$ (the shift by $1$ is caused by the fact that we worked with $\omega_{\cX/R}(\cX_{k,\red})$, rather than
 $\omega_{\cX/R}(\cX_{k,\red}-\cX_k)$, in \cite{MuNi}). The essential skeleton $\Sk(X)$ of $X$ is the subcomplex of $\Sk(\cX)$ spanned by the vertices $v_i$ for which
 $\nu_i/N_i$ is minimal \cite[4.7.5]{MuNi}; the minimal weight of $\omega$ on $X$ is given by the formula $$\min(\omega)=\min\{ \frac{\nu_i}{N_i}+1\,|\,i\in I\}.$$
It is proven in \cite[4.2.4]{NiXu} that $\Sk(X)$ is a strong deformation retract of $X^{\an}$.

\sss The essential skeleton of $X$ can also be computed from a minimal model of $X$ in the sense of the Minimal Model Program. It is proven in Corollary 4 of \cite{KNX} that
 $X$ has a minimal qdlt-model $\cX_{\min}$ over $R$.
  One can still associate a Berkovich skeleton $\Sk(\cX_{\min})$ to such a qdlt-model, by \cite[\S23]{KNX}. It is shown in
Theorem 24 of \cite{KNX} that $\Sk(\cX_{\min})$ coincides with the essential skeleton $\Sk(X)$. If $X$ is defined over an algebraic $k$-curve, rather than the field of Laurent series, one can even find a minimal dlt-model; in practice, it is often possible to reduce to this case by means of the approximation technique in \cite[\S4.2]{NiXu}.
 The essential skeleton behaves well under finite base change, as explained by the following proposition.

 \begin{prop}\label{prop:bc}
Let $d$ be a positive integer and set $X(d)=X\times_K K(d)$. Denote by  $\omega(d)$ the pullback of $\omega$ to $X(d)$ and by $\pi$ the canonical base change morphism
 $\pi:X(d)^{\an}\to X^{\an}$. Then $$\wt_{\omega(d)}=(\wt_{\omega}\circ \pi)+1-d.$$ In particular,
  the essential skeleton $\Sk(X(d))$ is the inverse image of $\Sk(X)$ under the
 canonical morphism $X(d)^{\an}\to X^{\an}$. Thus $\Sk(X(d))$ is stable under the Galois action of $\mu_d(k)$, and $\Sk(X)$ is canonically homeomorphic to the quotient
 $\Sk(X(d))/\mu_d(k)$.
  If $X$ has semi-stable reduction, then the map $\Sk(X(d))\to \Sk(X)$ is a homeomorphism.
 \end{prop}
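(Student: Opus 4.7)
The plan is to establish the weight function identity first at divisorial Berkovich points (which are dense in $X(d)^{\an}$) by an explicit local calculation on an snc-model of $X$ and its base change to $R(d)$, and then extend by continuity using the piecewise $\Q$-affinity of the weight function on Berkovich skeletons; the consequences for the essential skeleton will then follow formally.

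Fix an snc-model $\cX$ of $X$ over $R$ with $\cX_k=\sum_{i}N_iE_i$, and consider the base change $\cX\times_R R(d)$, which is generally non-regular along the components $E_i$ for which $\gcd(d,N_i)>1$. Its normalization admits a toric description near the generic point of each $E_i$: setting $g=\gcd(d,N_i)$, $d=gd'$ and $N_i=gN'_i$, the normalization is smooth with $g$ branches, each parameterized \'etale-locally by a coordinate $\tau$ satisfying $\pi=\tau^{d'}$ and $t^{1/d}=\eta\tau^{N'_i}$, where $\pi$ is a local equation of $E_i$ and $\eta$ is an appropriate root of unity. After log-resolving in codimension $\geq2$, this produces an snc-model $\cX'$ of $X(d)$ over $R(d)$ whose components dominating $E_i$ are the branch divisors $E'$, each of multiplicity $N'_i$ in $\cX'_k$. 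The divisorial point of $X(d)^{\an}$ attached to $E'$ maps under $\pi$ to the divisorial point of $X^{\an}$ attached to $E_i$.

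The central step is a local toric comparison: the generators $(d\pi\wedge dy)/dt$ of $\omega_{\cX/R}$ and $(d\tau\wedge dy)/d(t^{1/d})$ of $\omega_{\cX'/R(d)}$ differ by a monomial in $\tau$ whose exponent is determined by $d',N'_i,d,N_i$; this monomial encodes the conductor of the normalization $\cX'\to\cX\times_R R(d)$. Translating this transformation into orders in the logarithmic relative canonical bundles $\omega_{\cX/R}(\cX_{k,\red}-\cX_k)$ and $\omega_{\cX'/R(d)}(\cX'_{k,\red}-\cX'_k)$, and inserting the result into the model-theoretic formula $\wt(v)=\nu/N+1$, produces the identity $\wt_{\omega(d)}=\wt_{\omega}\circ\pi+1-d$ at divisorial points; since divisorial points are dense in $X(d)^{\an}$ and both sides are continuous (indeed piecewise affine on any Berkovich skeleton), the identity extends to all of $X(d)^{\an}$.

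For the remaining statements: the essential skeleton is by definition the locus where the weight function attains its minimum, so the identity forces $\Sk(X(d))=\pi^{-1}(\Sk(X))$. The intrinsic nature of $\Sk(X)$ ensures that it is preserved by any $K$-automorphism of $X$; in particular the $\mu_d(k)$-Galois action on $X(d)^{\an}$ preserves the preimage $\pi^{-1}(\Sk(X))=\Sk(X(d))$. Together with the identification of $X^{\an}$ as the topological quotient $X(d)^{\an}/\mu_d(k)$ of Berkovich spaces under the finite Galois action induced by $R\hookrightarrow R(d)$, this gives the quotient homeomorphism $\Sk(X)\cong \Sk(X(d))/\mu_d(k)$. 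Finally, in the semi-stable case every $N_i$ equals $1$, so $g=1$ and the normalization introduces no branching; the dual complexes of $\cX_k$ and $\cX'_k$ coincide combinatorially, so $\pi$ restricts to a continuous bijection between the compact essential skeletons, which is therefore a homeomorphism. The main obstacle is the toric bookkeeping in the central step, namely keeping track of the conductor contribution of the normalization and of the shift between the logarithmic twists on $\cX$ and on $\cX'$, both of which depend nontrivially on the exponents $g,d',N'_i$.
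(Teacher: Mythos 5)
Your reduction to a weight-function calculation and the formal deduction that $\Sk(X(d))=\pi^{-1}(\Sk(X))$ (hence Galois stability and the quotient description) proceed along the same lines as the paper; the paper refers the weight computation to the cited reference while you sketch the underlying toric/log-differential calculation explicitly. You should, however, actually carry that calculation out before asserting the constant $1-d$: the identity comes from comparing $\omega$ in $\omega_{\loga{X}/S^{\dagger}}$ with its pullback to $\omega_{\loga{Y}/S(d)^{\dagger}}$ while simultaneously rescaling the base valuation by a factor of $d$, and if you do not track that rescaling the apparent discrepancy depends on the slopes $\nu_i/N_i$ rather than being a constant.

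The serious gap is in your treatment of the final (semi-stable) assertion. You argue that since all $N_i=1$ the normalization of $\cX\times_R R(d)$ introduces ``no branching,'' so the dual complexes of $\cX_k$ and $\cX'_k$ ``coincide combinatorially,'' and you conclude that $\pi$ restricts to a bijection on skeletons. Two objections. First, the statement about dual complexes is not accurate: along codimension $\geq 2$ strata the normalization has cyclic quotient singularities of type $z_1\cdots z_r=\pi^d$, whose log-resolution subdivides the dual complex, so the complexes are not equal, only PL-homeomorphic; moreover, you only check absence of branching at generic points of codimension one, while irreducibility over the deeper strata (which is what would actually control the combinatorics) is asserted without argument. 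Second, and more fundamentally, even an abstract homeomorphism of the two skeletons does not force the natural map $\pi$ to be a bijection between them -- a priori $\Sk(X(d))=\pi^{-1}(\Sk(X))$ could be a nontrivial finite cover of $\Sk(X)$ with homeomorphic total space (e.g.\ a $d$-fold cover of a circle). What must be shown, and what the paper in fact proves, is that for every $x\in\Sk(\cX)$ the fibre $\pi^{-1}(x)=\mathscr{M}(\mathscr{H}(x)\otimes_K K(d))$ is a single point, equivalently that $K$ is algebraically closed in $\mathscr{H}(x)$. The paper's argument reduces by henselianity and density to divisorial $x$, fixes a regular system of parameters with $t=z_1\cdots z_r$, computes the residue field and value group of $\mathscr{H}(x)$ explicitly, and shows that $t$ has no $m$-th root because $(1,\dots,1)-m\gamma a$ is never divisible by $m$. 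This local valuation-theoretic computation is the substance of the proof and your dual-complex observation does not replace it.
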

 \begin{proof}
 The expression for the weight function $\wt_{\omega(d)}$ is an easy consequence of the compatibility of relative log differentials with base change; see \cite[4.1.9]{NiXu} and its proof.
  Thus it is enough to prove that, if $X$ has a semi-stable model $\cX$, then
  the corestriction of
  $X(d)^{\an}\to X^{\an}$ over $\Sk(\cX)$ is bijective. This implies final assertion in the statement, because $\Sk(X)$ is contained in $\Sk(\cX)$.
  It suffices to show that $K$ is algebraically closed in the completed residue field $\mathscr{H}(x)$ of $X^{\an}$ at any point $x$ of $\Sk(\cX)$, because the
  fiber of $X(d)^{\an}\to X$ over $x$ is canonically isomorphic to the spectrum of the Banach algebra $\mathscr{H}(x)\otimes_K K(d)$.
    Assume the contrary; then there exist an element $s$ in $\mathscr{H}(x)$ and an integer $m\geq 2$ such that $s^m=t$. The element $s$ already belongs to the residue field of the local ring $\mathcal{O}_{X^{\an},x}$, because this residue field is quasi-complete \cite[2.3.3 and 2.4.1]{berk-etale}. Since $\mathcal{O}_{X^{\an},x}$ is henselian \cite[2.1.5]{berk-etale}, it now follows that $t$ has an $m$-th root on an open neighbourhood of $x$ in $X^{\an}$. Thus we may assume that $x$ is divisorial in the sense of \cite[\S2.4]{MuNi}, because divisorial points are dense in $X^{\an}$ \cite[2.4.9]{MuNi}.

  Let $y$ be the image of $x$ under the specialization map $\spe_{\cX}:X^{\an}\to \cX_k$ and let $(z_1,\ldots,z_r)$ be a regular system of local parameters in $\mathcal{O}_{\cX,y}$ such that $t=z_1\cdots z_r$. Since $x$ lies in the skeleton $\Sk(\cX)$, it is monomial with respect to the model $\cX$, so that there exists a tuple of integers $a=(a_1,\ldots,a_r)$ such that $z^a=z_1^{a_1}\cdots z_r^{a_r}$ is a uniformizer in the valuation ring $\mathscr{H}(x)^o$ of $\mathscr{H}(x)$.
  Denote by $\kappa(y)$ the residue field of $\cX$ at $y$. The choice of a section of the projection $\widehat{\mathcal{O}}_{\cX,y}\to \kappa(y)$ determines an isomorphism $\widehat{\mathcal{O}}_{\cX,y}\cong \kappa(y)\llbr z_1,\ldots,z_r \rrbr$, and $\mathscr{H}(x)$ is the completion of the fraction field of $\widehat{\mathcal{O}}_{\cX,y}$ with respect to the divisorial valuation $v_x$ corresponding to the point $x$. The residue field $\widetilde{\mathscr{H}}(x)$ of the valued field $\mathscr{H}(x)$
  is given by $\kappa(y)(z^{b_1},\ldots,z^{b_{r-1}})$ where $\{a,b_1,\ldots,b_{r-1}\}$ is a basis of $\Z^r$. If we denote by $\gamma$ the normalized valuation of $s$ in the discretely valued field $\mathscr{H}(x)$, then $s/z^{\gamma a}$ is a unit in $\mathscr{H}(x)^o$ and the $m$-th power of its reduction in $\widetilde{\mathscr{H}}(x)$ coincides with the reduction of $t/z^{m\gamma a}$.
  This is impossible, because the element $(1,\ldots,1)-m\gamma a$ of $\Z^r$ is not divisible by $m$.
  \if false
  This valuation is determined by the weights $w_i=-\ln|z_i(x)|\in (0,1]$ for $i\in \{1,\ldots,r\}$. We denote by $w_{\min}$ the minimum of the weights $w_i$.

  Now it is easy to see that $t$ has no roots of order $m\geq 2$ in $\mathscr{H}(x)$, so that $K$ is algebraically closed in $\mathscr{H}(x)$.  It is enough to prove that, if $f$ and $g$ are elements of $\widehat{\mathcal{O}}_{\cX,y}$ and $g$ is non-zero, then
  the valuation of $(f/g)^m-t$ in $\mathscr{H}(x)$ is uniformly bounded above, so that cannot approximate $t$ by $m$-th powers in $\mathscr{H}(x)$.
  We consider the morphism of $\kappa(y)$-algebras $\varphi:\widehat{\mathcal{O}}_{\cX,y}\to \kappa(y)\llbr s \rrbr$ defined by $z_i\mapsto \lambda_i s$ for $i=0,\ldots,r-1$ and
  $z_r\mapsto \lambda_r s^{\varepsilon}$, where $\varepsilon \in \{1,2\}$ is chosen such that $r-1+\varepsilon$ is not divisible by $m$, and the $\lambda_i$ are general elements in $\kappa(y)$ such that $v_x(g)\geq \ord_s(\varphi(g))w_{\min}$.
    Then $\ord_s(\varphi(t))=r-1+\varepsilon$ is not divisible by $m$, so that $$\ord_s(\varphi((f/g)^m-t))\leq r-1+\varepsilon.$$ Since $v_x(h)\leq \ord_s(\varphi(h))$ for every element $h$  of $\widehat{\mathcal{O}}_{\cX,y}$, we conclude that $$v_x((f/g)^m-t)\leq r+1+m/w_{\min}.$$
    \fi
 \end{proof}

\sss We denote by $\delta(X)$ the dimension of the essential skeleton $\Sk(X)$, and we call this number the {\em degeneracy index} of $X$. By definition, it is contained in the
range $\{0,\ldots,\dim(X)\}$. It follows from Proposition \ref{prop:bc} that $\delta$ is invariant under finite base change: we have $\delta(X)=\delta(X(d))$ for every $d>0$.
 The degeneracy index can be viewed as a measure for the degeneration of $X$ at $t=0$ (up to finite base change). For instance, $\delta(X)=0$ if $X$ has potential good reduction.
We will discuss some more examples in Section \ref{sec:examples}. If $\cX$ is an snc-model of $X$ over $R$ with $\cX_k=\sum_{i\in I}N_i E_i$ and $\mathrm{div}_{\cX}(\omega)=\sum_{i\in I}\nu_i E_i$, then the description of $\Sk(X)$ in \eqref{sss:ess} implies at once that $\delta(X)+1$ is the maximal cardinality of
 a subset $J$ of $I$ such that $\cap_{j\in J}E_j$ is non-empty and $\nu_j/N_j=\min(\omega)-1$ for every $j\in J$.

\subsection{The largest pole of the motivic zeta function}

\sss In this section, we will relate the essential skeleton of $X$ to the largest pole of the motivic zeta function $Z_{X,\omega}(T)$. The notion of pole requires some care because the ring $\gro$ is not a domain. We adopt the following definition. Let $q$ be a rational number and let $m$ be a nonnegative integer. We say that $Z_{X,\omega}(T)$
has a pole of order at most $m$ at $q$ if we can find a set $\mathcal{S}$ consisting of multisets in
$\Z \times \Z_>0$ such that:
\begin{enumerate}
\item  each multiset $S \in \mathcal{S}$ contains at most $m$ elements $(a, b)$ with $a/b = q$, and

\item $Z_{X,\omega}(T)$ belongs to the sub-$\gro[T]$-module of $\gro\llbr T\rrbr$ generated by
$$\left\{ \prod_{(a,b)\in S}\frac{1}{1-\LL^a T^b}\,|\,S\in \mathcal{S}\right\}.$$
\end{enumerate}
We say that $Z_{X,\omega}(T)$ has a pole of order $m$ at $q$ if it has a pole of order at most $m$,
but not of order at most $m-1$ (the latter condition is void for $m=0$). We say that $Z_{X,\omega}(T)$ has a pole at $q$ if it has a pole of positive order at $q$.
 This terminology is explained by the fact that, when considering motivic generating series of this type, one usually makes a formal substitution $T=\LL^{-s}$ and considers it as a series in the variable $s$.

\sss It is obvious from the expression in Theorem \ref{thm-snc} that every pole $q$ of $Z_{X,\omega}(T)$ satisfies $q\leq 1-\min(\omega)$ and that the order of a pole of $Z_{X,\omega}(T)$ is at most $\mathrm{dim}(X)+1$ (because $E_J$ is empty if $|J|>\mathrm{dim}(X)+1$).  It also follows immediately from Theorem \ref{thm-snc} that $Z_{X,\omega}(T)$ has a pole of order at most $\delta(X)+1$ at $1-\min(\omega)$. We will now show that this bound is sharp.

\begin{thm}\label{thm:largest}
Let $X$ be a geometrically connected, smooth and proper $K$-scheme with trivial canonical line bundle, and let $\omega$ be a volume form on $X$.
 Then $Z_{X,\omega}(T)$ has a pole of order $\delta(X)+1$ at $1-\min(\omega)$, and this is the largest pole of $Z_{X,\omega}(T)$.
\end{thm}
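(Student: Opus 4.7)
The upper bound on the pole order at $q := 1-\min(\omega)$ follows directly from Theorem \ref{thm-snc}. Setting $I_0 := \{i \in I : \nu_i/N_i = \min(\omega)-1\}$, the denominator pairs $(a,b) = (-\nu_j, N_j)$ in the $J$-th summand of the formula satisfy $a/b = q$ precisely when $j \in J \cap I_0$. Any nonvanishing summand requires $E_J \neq \emptyset$, forcing $E_{J \cap I_0} \supseteq E_J$ to be nonempty and hence $|J \cap I_0| \leq \delta(X)+1$ by the combinatorial description recalled at the end of Section \ref{ss:motint}. The same inequality $-\nu_j/N_j \leq -\min\{\nu_i/N_i\} = q$ shows that $q$ is the largest candidate pole.

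For the lower bound I would pass through the Hodge-Deligne realization $E : \gro \to \Z[u,v,(uv)^{-1}]$ (forgetting the $\widehat{\mu}$-action). The key point is that a motivic pole of order $m$ at $q$ translates into a pole of the rational function $E(Z_{X,\omega}(T))$ of order at most $m$ at $T_0 := (uv)^{-q}$: each factor $\frac{1}{1-\LL^a T^b}$ with $a/b = q$ becomes $\frac{1}{1-(uv)^a T^b}$, which has a simple pole at $T_0$, while factors with $a/b \neq q$ specialize to units at $T_0$. Thus it is enough to show $E(Z_{X,\omega}(T))$ has pole order at least $\delta(X)+1$ at $T_0$. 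Grouping the summands of Theorem \ref{thm-snc} by $J_0 := J \cap I_0$ and writing $J = J_0 \sqcup L$ with $L \subseteq I \setminus I_0$, the realized factor $E\bigl(\frac{u_j}{1-u_j}\bigr)$ has Laurent expansion $\frac{-T_0}{N_j(T-T_0)} + O(1)$ at $T_0$ for $j \in J_0$ and evaluates there to $\frac{(uv)^{-a_j}}{1-(uv)^{-a_j}}$ for $j \in L$, with $a_j := \nu_j + qN_j > 0$. Only subsets $J_0 \in \mathcal{J}^{\max} := \{J_0 \subseteq I_0 : |J_0| = \delta(X)+1,\ E_{J_0} \neq \emptyset\}$ contribute to the top pole order, and the coefficient of $(T-T_0)^{-(\delta(X)+1)}$ equals
\[
C \;=\; (-T_0)^{\delta(X)+1} \sum_{J_0 \in \mathcal{J}^{\max}} \frac{1}{\prod_{j \in J_0} N_j} \sum_{L} E\bigl([\widetilde{E}^o_{J_0 \cup L}]\bigr)((uv)-1)^{\delta(X)+|L|} \prod_{j \in L}\frac{(uv)^{-a_j}}{1-(uv)^{-a_j}},
\]
with the inner sum running over $L \subseteq I \setminus I_0$ satisfying $E_{J_0 \cup L} \neq \emptyset$.

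The main obstacle is to show $C \neq 0$. I would extract the leading $(uv)$-degree: using $\dim E_{J_0 \cup L} = \dim X - \delta(X) - |L|$, a direct dimension count shows the $L$-summand has leading $(uv)$-degree $\dim X - \sum_{j \in L} a_j$, so only $L = \emptyset$ survives in top degree, contributing $n_{J_0}(uv)^{\dim X}$ where $n_{J_0} > 0$ is the number of top-dimensional irreducible components of $\widetilde{E}^o_{J_0}$. Summing over $J_0 \in \mathcal{J}^{\max}$ (nonempty by the definition of $\delta(X)$) yields a strictly positive leading $(uv)^{\dim X}$-coefficient, and since $(-T_0)^{\delta(X)+1}$ is a nonzero monomial in $uv$, we conclude $C \neq 0$. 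The remaining delicacy is that $\widetilde{E}^o_{J_0} \neq \emptyset$ for $J_0 \in \mathcal{J}^{\max}$, which holds because the snc condition forces $\dim E_{J_0 \cup \{i\}} < \dim E_{J_0}$ for any $i \notin J_0$, so $E_{J_0}$ cannot be covered by the $E_i$'s with $i \notin J_0$, and $E^o_{J_0}$ is consequently open dense in $E_{J_0}$.
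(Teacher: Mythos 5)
Your proof is correct and, modulo the choice of specialization, takes essentially the same route as the paper. Both arguments establish the upper bound directly from the explicit formula in Theorem \ref{thm-snc} and then reduce the lower bound to a residue computation after specializing the coefficients to an integral domain; the crucial input in both cases is the positivity of the top-degree coefficient of the realization of a nonempty variety (namely the number of irreducible components of maximal dimension). The one point of divergence is the specialization: you pass through the Hodge--Deligne polynomial $E : \gro\,\to\,\Z[u,v,(uv)^{-1}]$, whereas the paper uses the (single-variable) Poincar\'e polynomial $P$. These are interchangeable here; the one-variable version makes the ``positive leading coefficient'' phrasing slightly cleaner, while your two-variable version requires reading ``$(uv)$-degree'' as total $(u,v)$-degree so that cross-terms $u^pv^q$ with $p\neq q$ never compete with the $(uv)^{\dim X}$ coefficient. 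You also make the residue more explicit than the paper does, by splitting $J = J_0 \sqcup L$ and isolating the unique top-degree contribution at $L = \emptyset$; the paper is content to note that every summand, when Laurent-expanded, has a strictly positive leading coefficient and hence that the sum cannot vanish. One tiny cleanup: both arguments need to work over a field extension containing $u^{\pm 1/b}$ (resp.\ $u^{\pm 1/b}, v^{\pm 1/b}$), with $b$ the denominator of $1-\min(\omega)$, so that $T_0$ and the residues $1/N_j$ make sense --- the paper says this explicitly; you should too, and the reference to ``the end of Section \ref{ss:motint}'' for the combinatorial characterization of $\delta(X)$ should point instead to the discussion of the essential skeleton following Proposition \ref{prop:bc}.
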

\begin{proof}
We only need to prove that the order of the pole at $1-\min(\omega)$ is not smaller than $\delta(X)+1$. This is done by a direct residue calculation on the expression for $Z_{X,\omega}(T)$ in Theorem \ref{thm-snc}. In order to avoid complications related to the presence of zero-divisors in $\gro$, we first specialize the coefficients of the zeta function to an integral domain. Denote by $\mathcal{M}_k=K_0(\Var_k)[\LL^{-1}]$ the localized Grothendieck ring of $k$-varieties without group action. The {\em Poincar\'e specialization}
$$P:\mathcal{M}_k \to \Z[u,u^{-1}]$$ is the unique ring morphism that sends $[Y]$ to the Poincar\'e polynomial
$$P(Y,u)=\sum_{i\geq 0}(-1)^i \mathrm{dim}H^i(Y(\C),\Q)u^i$$
for every smooth and proper $k$-scheme $Y$. The existence of such a morphism $P$ can be deduced from Hodge theory or weak factorization; see \cite[\S8]{Ni-tracevar}. For every $k$-scheme $Y$ of finite type, we will write $P(Y,u)$ for the image of $[Y]$ under $P$.
The image of $\LL=[\mathbb{P}^1_k]-1$ is equal to $u^2$.

 We write $1-\min(\omega)$ as $a/b$ for some integers $a,b$ with $b>0$.
Forgetting the $\widehat{\mu}$-action on $Z_{X,\omega}(T)$ and specializing the formula in Theorem \ref{thm-snc} by means of $P$, we obtain a series
$$Z'(T)=\sum_{\emptyset \neq J\subset I}P(\widetilde{E}^o_J,u)(u^2-1)^{|J|-1}\prod_{j\in J}\frac{u^{-2\nu_j}T^{N_j}}{1-u^{-2\nu_j}T^{N_j}}.$$
 It suffices to show that this series, viewed as a formal power series in $T$ over the field $\Q(u^{1/b})$, has a pole of order $\delta(X)+1$ at $T=u^{-2a/b}$.
  Evaluating $(1-u^{2a/b}T)^{\delta(X)+1}Z'(T)$ at $T=u^{-2a/b}$, we find
  a finite sum of expressions of the form
  $$P(Y,u)(u^2-1)^{\beta}\prod_{j=1}^r \frac{u^{-\alpha_j/b}}{1-u^{-\alpha_j/b}}$$
   where $Y$ is a $k$-scheme of finite type, $\beta$ is a nonnegative integer and $r$ and the $\alpha_j$ are positive integers. Here we used that $\mu_i/N_i \geq \min (\omega)-1= -a/b$ for every $i\in I$. Developing these terms as Laurent series in $u^{-1/b}$, we get a finite sum of Laurent series whose leading coefficients are all positive
   because, for every $k$-scheme of finite type $Y$, the polynomial $P(Y,u)$ has degree $2\mathrm{dim}(Y)$ and the coefficient of $u^{2\mathrm{dim}(Y)}$ is positive (it is the number of irreducible components of maximal dimension of $Y$ \cite[8.1]{Ni-tracevar}). Thus we find that
   the value of $(1-u^{2a/b}T)^{\delta(X)+1}Z'(T)$ at $T=u^{-2a/b}$ is non-zero. This concludes the proof.
\end{proof}

\begin{remark}
Theorem \ref{thm:largest} has a natural counterpart for Denef and Loeser's motivic zeta function of a hypersurface singularity.
Let $X$ be a connected smooth $k$-scheme, let $f:X\to \A^1_k$ be a dominant morphism and let $x$ be a closed point on $X$ such that $f(x)=0$.
 Let $h:Y\to X$ be a log resolution for $(X,\mathrm{div}(f))$ and write $\mathrm{div}(f\circ h)=\sum_{i\in I}N_i E_i$ and $K_{Y/X}=\sum_{i\in I}(\nu_i-1)E_i$.
  Denote by $\lct_x(f)$ the log canonical threshold of $f$ at $x$, that is
  $$\lct_x(f)=\min\{\frac{\nu_i}{N_i}\,|\,i\in I,\,E_i\cap h^{-1}(x)\}.$$
  Let $Z_{f,x}(T)\in \gro\llbr T\rrbr$ be the motivic zeta function of $f$ at $x$ (the fiber over $x$ of the zeta function $Z(T)$ in \cite[3.2.1]{DL-barc}). Then it follows from Denef and Loeser's formula \cite[3.3.1]{DL-barc} and a similar residue calculation as in the proof of Theorem \ref{thm:largest} that $-\lct_x(f)$ is the largest pole of
  $Z_{f,x}(T)$ and that its order is equal to the maximal cardinality of a subset $J$ of $I$ such that $(\cap_{j\in J}E_j)\cap h^{-1}(x)$ is non-empty and
  $\nu_j/N_j=\lct_x(f)$ for every $j\in J$. See \cite[3.5]{NiXu2}.
\end{remark}

\subsection{Relation with Hodge theory}
\sss
The aim of this section is to relate the invariants $\min(\omega)$ and $\delta(X)$ to the limit mixed Hodge structure associated with the $K$-scheme $X$.
  In particular, we will show that $\exp(-2\pi i \min(\omega))$ is an eigenvalue of the monodromy operator $\sigma$ on $H^n(X\times_K K^a,\Q)$, where $n$ is the dimension of $X$.
     Thus the pole $1-\min(\omega)$ of $Z_{X,\omega}(T)$ satisfies the Monodromy Property in Definition \ref{def:MP}. We announced a less general version of Theorem \ref{thm:hodge} in \cite[6.7]{HaNi-CY} without proof; a different, independent proof of Theorem \ref{thm:hodge} also appeared recently in \cite[Thm.A]{EFM} in the case where $X$ is defined over the field of germs of meromorphic functions at the origin of the complex plane.

\sss
 The limit mixed Hodge structure on the cohomology spaces $H^i(X\times_K K^a,\Q)$ was constructed by Stewart and Vologodsky in \cite[\S2.2]{StVo}, using logarithmic geometry (in particular the results on logarithmic de Rham cohomology in \cite{IKN}).
  It generalizes the classical construction of Steenbrink in the case where $X$ is defined over a complex punctured disk \cite{steenbrink}. We denote by $F^{\bullet}$ the Hodge filtration on $H^i(X\times_K K^a,\Q)$. Stewart and Vologodsky made the assumption that $X$ is projective, but this is only used to prove that $F^{\bullet}$ and the monodromy weight filtration $W_{\bullet}$ define a mixed Hodge structure, and not in the construction of $F^{\bullet}$ or the proof of the degeneration of the Hodge spectral sequence \cite[7.1]{IKN}.

  \begin{theorem}\label{thm:hodge}
  Let $X$ be a smooth, proper, geometrically connected $K$-scheme of dimension $n$ with trivial canonical line bundle, and let $\omega$ be a volume form on $X$.
   Then the semisimple part of the monodromy operator $\sigma$ acts on the one-dimensional complex vector space $$F^n H^n(X\times_K K^a,\C)$$ by multiplication with
   $\exp(-2\pi i \min(\omega))$. If $X$ is projective, then the action of $\sigma$ on $H^n(X\times_K K^a,\C)$ has a Jordan block with eigenvalue $\exp(-2\pi i \min(\omega))$ of rank at least $\delta(X)+1$.
    \end{theorem}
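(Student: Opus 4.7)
The plan is to reduce to the semi-stable case by a finite base change. Choose a positive integer $d$ (divisible by all the multiplicities in some snc-model of $X$) such that $X(d)$ admits a projective $\mu_d$-equivariant semi-stable $R(d)$-model $\cY$; existence follows from the semi-stable reduction theorem \cite[Ch4\S3]{KKMS} combined with $\mu_d$-equivariant toroidal modifications. Write $\cY_k=\sum_{j} E_j$ (all multiplicities equal to $1$) and $\mathrm{div}_{\cY}(\omega(d))=\sum_j \nu_j^{(d)} E_j$, viewed in the logarithmic canonical bundle $\omega_{\cY/R(d)}(\log \cY_k)=\omega_{\cY/R(d)}$. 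Set $m:=\min_j \nu_j^{(d)}$. Proposition \ref{prop:bc} ensures that $m/d \equiv \min(\omega)-1 \pmod{\mathbb{Z}}$ and that the subset of components $E_j$ achieving this minimum is $\mu_d$-stable, with dual complex equal to the essential skeleton $\Sk(X(d))$, which has dimension $\delta(X)$.

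For the eigenvalue assertion I would invoke the Stewart-Vologodsky construction of the limit mixed Hodge structure via log geometry. By the degeneration of the log Hodge-to-de Rham spectral sequence \cite[7.1]{IKN}, the top Hodge piece $F^n H^n_{\mathrm{lim}}$ is canonically isomorphic to the one-dimensional space $H^0(\cY_k,\, \omega_{\cY/R(d)}(\log \cY_k)|_{\cY_k})$. With $s=\sqrt[d]{t}$, the rescaled form $s^{-m}\omega(d)$ is a non-vanishing global section of $\omega_{\cY/R(d)}(\log \cY_k)$, and its restriction to $\cY_k$ generates $F^n H^n_{\mathrm{lim}}$. Since $\omega(d)=\pi^*\omega$ is $\mu_d$-invariant and $\sigma$ acts on $s$ by $s \mapsto \exp(2\pi i/d)\,s$, the operator $\sigma$ multiplies this generator by $\exp(-2\pi i m/d)=\exp(-2\pi i \min(\omega))$; on the one-dimensional invariant subspace $F^n H^n_{\mathrm{lim}}$ this action automatically coincides with its semi-simple part.

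For the Jordan-block assertion, assume $X$ is projective and let $N=\log(\sigma_u)$. The goal is to prove $N^{\delta(X)}\alpha \neq 0$ for $\alpha := s^{-m}\omega(d)|_{\cY_k}$, which will produce a Jordan block of size at least $\delta(X)+1$ on the $\sigma_s$-eigenspace at $\exp(-2\pi i \min(\omega))$. I would apply the Rapoport-Zink-Steenbrink weight spectral sequence, whose $E_1$-term expresses $\mathrm{Gr}^W_{n+r} H^n_{\mathrm{lim}}$ as a direct sum of shifted cohomology groups of the intersection strata $E_J$ of $\cY_k$, and whose monodromy operator is computed combinatorially from the face inclusions in the dual complex. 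Since $\alpha$ is non-vanishing on precisely the components $E_j$ corresponding to vertices of $\Sk(X(d))$ (those with $\nu_j^{(d)}=m$), tracking it through the spectral sequence locates it in $W_{n+\delta(X)}\setminus W_{n+\delta(X)-1}$ and identifies its image in $\mathrm{Gr}^W_{n+\delta(X)} F^n H^n_{\mathrm{lim}}$ with a non-zero fundamental class on the $\delta(X)$-dimensional top-depth stratum of the essential skeleton. Hard Lefschetz for the monodromy weight filtration on the limit mixed Hodge structure (Cattani-Kaplan-Schmid, which requires projectivity) then identifies $N^{\delta(X)}\colon \mathrm{Gr}^W_{n+\delta(X)} \xrightarrow{\sim} \mathrm{Gr}^W_{n-\delta(X)}$ as an isomorphism, forcing $N^{\delta(X)}\alpha \neq 0$. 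The main subtle point is the identification of the weight of $\alpha$: while the link between top strata of $\cY_k$ and vertices of $\Sk(X(d))$ is standard, matching the Hodge bigrading of $\alpha$ with the combinatorial depth in $\Sk(X(d))$ through the residue maps of the log de Rham complex will require care, and the projectivity hypothesis enters only at Hard Lefschetz, explaining its absence from the eigenvalue statement.
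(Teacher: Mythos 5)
Your treatment of the eigenvalue assertion is essentially the paper's: pass to a $\mu_d$-equivariant log smooth model over $R(d)$, use the degeneration of the log Hodge spectral sequence \cite[7.1]{IKN} to identify $F^nH^n$ with the $k$-fibre of global sections of the log canonical bundle, exhibit the reduction of the rescaled form as a generator, and read off the character from the $\mu_d$-action. The one difference is how you produce the equivariant log model: you invoke $\mu_d$-equivariant semi-stable reduction via toroidal modifications, which is not a triviality (the whole of Section \ref{sec:kulikov} is devoted to the difficulty of finding equivariant models); the paper instead takes the fs fiber product $\loga{X}\times^{\mathrm{fs}}_{S^{\dagger}}S(d)^{\dagger}$ of a non-equivariant snc model, which is automatically $\mu_d$-equivariant and sidesteps the issue entirely.

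The genuine gap is in the Jordan-block part. You propose to apply the Rapoport--Zink/Steenbrink weight spectral sequence, its $E_2$-degeneration, and the identification of the abutting filtration $W$ with the monodromy weight filtration, directly in the Stewart--Vologodsky log-geometric setting over $R=k\llbr t\rrbr$. But these classical facts are developed for semi-stable (projective) families over a complex disk or an algebraic curve; they are not in the literature in the generality you need. The paper handles this by first using the semi-stable reduction theorem and then the observation (Illusie, \cite[A.4]{nakayama}) that the limit MHS and $\delta(X)$ depend only on $\cX\times_R R/(t^2)$, so that a spreading-out/Greenberg-approximation argument reduces to a projective semi-stable family over an algebraic $k$-curve, where Steenbrink's theory applies. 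Only after that reduction does the paper do the key computation you flag as requiring ``care'': that $\phi\wedge dt/t$ lies in $W_{\delta(X)}\Omega^{n+1}_{\cX/k}(\log\cX_s)\otimes\mathcal{O}_{\cX_s}$ but not in $W_{\delta(X)-1}$, which via the exact functor $\mathrm{gr}_F^n$ applied to the weight spectral sequence yields $\mathrm{gr}^W_{n+\delta(X)}F^nH^n\neq 0$. Without the reduction step your argument has a hole. A minor further point: attributing the Hard Lefschetz step to Cattani--Kaplan--Schmid is imprecise — once one knows Steenbrink's $W$ coincides with the monodromy weight filtration of $N$, the isomorphism $N^r\colon\mathrm{Gr}^W_{n+r}\xrightarrow{\sim}\mathrm{Gr}^W_{n-r}$ is built into the definition of that filtration; what projectivity is needed for is the comparison of the two filtrations, which is the ingredient of Steenbrink's limit MHS theorem that Stewart and Vologodsky use it for.
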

    \begin{proof}
    Let $\cX$ be an snc-model of $X$ over $R$. We denote by $S^{\dagger}$, resp.~$S(d)^{\dagger}$, the spectrum of $R$, resp.~$R(d)$, endowed with its standard log structure, for every $d>0$.
        We denote by $\loga{X}$ the
    log scheme obtained by endowing $\cX$ with the divisorial log structure induced by $\cX_k$. Then $\loga{X}$ is smooth over $S^{\dagger}$, and the logarithmic
    relative canonical line bundle $\omega_{\loga{X}/S^{\dagger}}$ is canonically isomorphic to $\omega_{\cX/R}(\cX_{k,\red}-\cX_k)$.

     Let $d$ be a sufficiently divisible positive integer and denote by $\loga{Y}$ the fiber product
     $$\loga{X}\times^{\mathrm{fs}}_{S^{\dagger}}S(d)^{\dagger}$$ in the category of fine and saturated (fs) log schemes. We denote by $\cY$ its underlying scheme; this is the normalization of $\cX\times_R R(d)$. Then the following properties hold:
     \begin{enumerate}
     \item the line bundle $\omega_{\loga{Y}/S(d)^{\dagger}}$ is the pullback of $\omega_{\loga{X}/S^{\dagger}}$ to $\cY$;
     \item the vector space $F^n H^n(X\times_K K^a,\C)$ is canonically isomorphic to $$H^0(\cY,\omega_{\loga{Y}/S(d)^{\dagger}})\otimes_{R(d)}k$$ by the degeneration of the Hodge spectral sequence;
     \item under this isomorphism, the action of the semisimple part of $\sigma$ on $F^n H^n(X\times_K K^a,\C)$ is induced by the Galois action of $\sigma_d=\exp(2\pi i/d)\in \mu_d(k)$ on $\cY$.
     \end{enumerate}
Now set $\alpha=\min(\omega)-1$.  Changing $d$ by a multiple, we may assume that $d\alpha$ is an integer.
            By the definition of $\min(\omega)$, the element $t^{-\alpha}\omega$ extends to a global section of $\omega_{\loga{Y}/S(d)^{\dagger}}$ that generates
            $\omega_{\loga{Y}/S(d)^{\dagger}}$ at the generic point of every component of $\cY_k$ that dominates a component $E_i$ of $\cX_k$ satisfying $\nu_i/N_i=\min(\omega)$ (notation as in \eqref{sss:snc}). Thus the reduction of $t^{-\alpha}\omega$ modulo $t^{1/d}$ generates $F^n H^n(X\times_K K^a,\C)$. Since $\omega$ is defined over $K$, $\sigma_d\in \mu_d(k)$ acts on $F^n H^n(X\times_K K^a,\C)$ (from the left) by multiplication with $\exp(-2\pi i \alpha)=\exp(-2\pi i \min(\omega))$. This proves the first part of the statement.

            Now assume that $X$ is projective; then we can choose $\cX$ to be projective over $R$.  To prove the statement on Jordan blocks,  it suffices to show that \begin{equation}\label{eq:weight}
            \mathrm{gr}^W_{n+\delta(X)} F^n H^n(X\times_K K^a,\C)\neq 0,
            \end{equation}
            because the weight filtration of the limit mixed Hodge structure on $H^n(X\times_K K^a,\Q)$ is the monodromy weight filtration associated with $\sigma$.
 We first reduce to the case where $X$ is defined over an algebraic curve, so that we can apply further results from Steenbrink's theory.
  The property \eqref{eq:weight} is invariant under finite extension of the base field $K$, so that we may assume that the special fiber $\cX_k$ is reduced, by the semistable reduction theorem.
   By construction, the limit mixed Hodge structure on $H^n(X\times_K K^a,\Q)$ only depends on the restriction of $\loga{X}$ over the standard log point $(\Spec k)^{\dagger}$.
   The same is true for the degeneracy index $\delta(X)$, by the proof of \cite[4.2.3]{NiXu}. In
    particular, they only depend on $\cX\times_R R/(t^2)$, by Illusie's result in \cite[A.4]{nakayama}. Thus, by a standard application of spreading out and Greenberg approximation, we may assume that $\cX$ is defined over an algebraic $k$-curve -- see for instance \cite[5.1.2]{MuNi}.

    So we change notation and denote by $\cX$ a regular projective flat scheme over a smooth $k$-curve $C$, by $s$ a closed point on $C$ and by $t$ a uniformizer in $\mathcal{O}_{C,s}$ such that $X\cong \cX\times_C \Spec K$ and  $\cX_s$ is a reduced divisor with strict normal crossings. We write $f:\cX\to C$ for the structural morphism. Then the mixed Hodge structure on $H^n(X\times_K K^a,\Q)$ coincides with Steenbrink's limit mixed Hodge structure on the complex analytic nearby cohomology $\mathbb{H}^n(\cX_s(\C),R\psi_{f}(\Q))$.
     The complex component of Steenbrink's limit mixed Hodge structure is defined in \cite[4.17]{steenbrink} by means of a cohomological mixed Hodge complex  $(A^{\bullet}_{\C},F^{\bullet},W_{\bullet})$ of sheaves of complex vector spaces on $\cX_s$. Here $A^{\bullet}_{\C}$ is the simple complex associated with a double complex $A^{\bullet \bullet}_{\C}$ that satisfies
     $$A^{pq}_{\C}=\Omega^{p+q+1}_{\cX/k}(\log \cX_s)/W_q\Omega^{p+q+1}_{\cX/k}(\log \cX_s)$$
for $p,q\geq 0$,     where $W_\bullet$ is the usual weight filtration on  $\Omega^{p+q+1}_{\cX/k}(\log \cX_s)$.
      Shrinking $C$ around $s$, we may assume that $f_*\Omega^n_{\cX/C}(\log \cX_s)$ is free of rank one; let $\phi$ be a generator.
      Then, by the description of the essential skeleton $\Sk(X)$ in \eqref{sss:ess}, we know that $\phi\wedge dt/t$ defines a global section of
      $$W_{\delta(X)}\Omega^{n+1}_{\cX/k}(\log \cX_s)\otimes_{\mathcal{O}_{\cX}}\mathcal{O}_{\cX_s}$$
      that does not lie in
      $$W_{\delta(X)-1}\Omega^{n+1}_{\cX/k}(\log \cX_s)\otimes_{\mathcal{O}_{\cX}}\mathcal{O}_{\cX_s}.$$
      This implies that $$H^0(\cX_s,\mathrm{gr}_i^WF^nA^{\bullet}_{\C})=H^0(\cX_s,\mathrm{gr}_i^W\Omega^{n+1}_{\cX/k}(\log \cX_s))$$
      is non-zero if and only if $i=\delta(X)$. Thus by applying the exact functor $F^n=\mathrm{gr}_F^n$ to the weight spectral sequence in \cite[4.20]{steenbrink},
       we find that
       $$\mathrm{gr}^W_{n+\delta(X)}F^n \mathbb{H}^n(\cX_s(\C),R\psi_{f}(\C)) \cong H^0(\cX_s,\mathrm{gr}_{\delta(X)}^WF^nA^{\bullet}_{\C})\neq 0.$$
       This concludes the proof.
    \end{proof}

\subsection{Poles of maximal order}
\sss It follows immediately from Theorem \ref{thm-snc} that each pole of $Z_{X,\omega}(T)$ has order at most $\dim(X)+1$. For Denef and Loeser's motivic zeta functions of hypersurface singularities, it was shown by Xu and the second author in \cite{NiXu2} that the only possible pole of order $\dim(X)+1$ is the largest pole, i.e., minus the log canonical threshold of $f$ at $x$. This property had been conjectured by Veys in \cite{LaVe}. The results in \cite{NiXu2} imply that the analogous property holds for Calabi-Yau varieties.

\begin{thm}\label{thm:maxorder}
Let $X$ be a geometrically connected, smooth and proper $K$-scheme with trivial canonical line bundle, and let $\omega$ be a volume form on $X$.
 Then $Z_{X,\omega}(T)$ has a pole of order $\dim(X)+1$ if and only if $X$ has maximal degeneracy index; that is, $\delta(X)=\dim(X)$.
 Moreover, if $s$ is a pole of
order $\dim(X)+1$ of $Z_{X,\omega}(T)$, then $s=1-\min(\omega)$.
\end{thm}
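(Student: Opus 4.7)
Set $n = \dim(X)$. The ``if'' direction is immediate from Theorem \ref{thm:largest}: if $\delta(X) = n$, then the pole of order $\delta(X)+1 = n+1$ at $1-\min(\omega)$ provided by that theorem already has maximal order. For the converse together with the final assertion, the plan is to combine the residue argument of Theorem \ref{thm:largest} with the pole-structure result of \cite{NiXu2} for Denef and Loeser's motivic zeta function of a hypersurface singularity.

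Fix an snc-model $\cX$ of $X$ with $\cX_k = \sum_{i \in I}N_i E_i$ and $\mathrm{div}_{\cX}(\omega) = \sum_{i \in I}\nu_i E_i$ as in \eqref{sss:snc}. Because $\dim\cX = n+1$ and $\cX_k$ is snc, a subset $J \subset I$ with $E_J \neq \emptyset$ satisfies $|J| \leq n+1$, and $|J| = n+1$ forces $E_J$ to be a finite set of closed points. Inspection of Theorem \ref{thm-snc} shows that only subsets $J$ with $|J| = n+1$ and $\nu_j/N_j$ constant on $J$ can contribute to a pole of order $n+1$.

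Suppose $Z_{X,\omega}(T)$ has a pole of order $n+1$ at $s = a/b$ with $b > 0$. First I would apply the Poincar\'e specialization $P\colon\gro\to\Z[u,u^{-1}]$ of Theorem \ref{thm:largest}, multiply the specialized series by $(1-u^{2a/b}T)^{n+1}$, and evaluate at $T = u^{-2a/b}$. Only summands indexed by $J$ with $|J| = n+1$, $\widetilde{E}^o_J \neq \emptyset$, and $\nu_j/N_j = -s$ for every $j \in J$ survive, and each contributes a Laurent polynomial in $u^{1/b}$ whose leading coefficient is the positive integer counting top-dimensional components of $\widetilde{E}^o_J$. Hence no cancellation is possible and at least one such $J$ exists; pick a closed point $p \in E_J$. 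Next I would localize at $p$ and reduce to the hypersurface setting. In \'etale local coordinates $z_1, \ldots, z_{n+1}$ around $p$ one has $t = u\cdot\prod_{j \in J}z_j^{N_j}$ for a unit $u$, so $\cX$ is itself a log resolution of the germ $(\cX_{(p)}, V(t))$; moreover, the $J$-summands of Theorem \ref{thm-snc} match the corresponding summands of Denef and Loeser's formula for the local zeta function $Z_{t,p}(T)$ up to a global monomial substitution $T \mapsto \LL^{c}T$ arising from the local expression of $\omega$, in the spirit of the scaling identity \eqref{eq:rescale}. Hence a pole of order $n+1$ of $Z_{X,\omega}(T)$ at $s$ yields a pole of order $n+1$ of $Z_{t,p}(T)$ at the shifted parameter, which by the main result of \cite{NiXu2} must coincide with the largest pole $-\lct_p(t)$ of $Z_{t,p}(T)$. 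Unwinding the substitution gives $-s = \min_{j\in J}\nu_j/N_j$; combining with Theorem \ref{thm:largest}, which bounds every pole by $1-\min(\omega)$ from above, forces $s = 1 - \min(\omega)$ and $\nu_j/N_j = \min(\omega) - 1$ for every $j \in J$. Since $|J| = n+1$, the description of $\Sk(X)$ in \eqref{sss:ess} then implies $\delta(X) = n$.

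The main obstacle is the localization step: matching the $J$-summands of $Z_{X,\omega}(T)$ with those of the local Denef-Loeser zeta function $Z_{t,p}(T)$ precisely, identifying the shift of parameter induced by $\omega$ with a single substitution of the form \eqref{eq:rescale}, and thereby importing the pole-structure theorem of \cite{NiXu2} without modification. The heart of that theorem --- that a zero-dimensional log canonical stratum on which every incident boundary component has the same log discrepancy ratio forces that ratio to equal the log canonical threshold --- is proved in \cite{NiXu2} by Minimal Model Program techniques, most notably connectedness of log canonical centers. Once that translation is made rigorous, the rest of the argument is a formal residue calculation combined with Theorem \ref{thm:largest}.
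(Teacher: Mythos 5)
Your overall strategy---extract from Theorem~\ref{thm-snc} a subset $J$ of cardinality $n+1$ with $E_J\neq\emptyset$ on which $\nu_j/N_j$ is constant and equal to $-s$, then invoke the MMP machinery underlying \cite{NiXu2} to force this constant to equal $\min(\omega)-1$---is the same engine that powers the paper's proof, and you have correctly identified (connectedness of log canonical centers) the content being imported. However, two specific steps do not go through as written.

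First, the Poincar\'e specialization step is both unnecessary and, as stated, circular. To conclude that a pole of order $n+1$ at $s$ forces the existence of such a $J$, one only needs the easy direction: if no such $J$ existed, every summand of Theorem~\ref{thm-snc} would carry at most $n$ denominator factors with $a/b=s$, so $Z_{X,\omega}(T)$ would have a pole of order at most $n$ at $s$---a contradiction, with no non-cancellation argument required. Your specialization argument instead tries to deduce non-vanishing of $(1-u^{2a/b}T)^{n+1}Z'(T)$ at $T=u^{-2a/b}$ from the hypothesis on $Z_{X,\omega}$; but a ring morphism can always \emph{lower} the pole order, so this would require first knowing that the specialized series $Z'(T)$ itself has a pole of order $n+1$ at $s$, which is exactly what the positivity of the putative surviving terms is supposed to establish, not a given. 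Second, and more importantly, the reduction to a local Denef--Loeser zeta function $Z_{t,p}(T)$ at a closed point $p$ of a zero-dimensional stratum, together with the identification of the $\omega$-shift with a single substitution of the form \eqref{eq:rescale}, is precisely the step you flag as ``the main obstacle'' and it is not actually carried out. The two series are structurally different: the exponents in $Z_{t,p}(T)$ come from discrepancies of a log resolution with respect to the ambient scheme, whereas the $\nu_i$ in $Z_{X,\omega}(T)$ are orders of a global volume form as a rational section of the \emph{logarithmic relative} canonical sheaf, and locally around $p$ the ratio between these is not a single monomial in $t$. The paper bypasses the reduction entirely by quoting Theorem~5.4 of \cite{NiXu2}, which is already formulated for weight functions on Berkovich skeletons of degenerations: each connected component of $E_J$ with $|J|=n+1$ and $\nu_j/N_j\equiv-s$ yields a maximal face of $\Sk(\cX)$ on which $\wt_\omega$ is constant, and that theorem directly forces this constant to be $\min(\omega)$, giving $s=1-\min(\omega)$ and, by the description of the essential skeleton in \eqref{sss:ess}, $\delta(X)=n$.
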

\begin{proof}
 In view of theorem \ref{thm:largest}, it is enough to prove the second part of the statement: if $s$ is a pole of
order $\dim(X)+1$ of $Z_{X,\omega}(T)$, then $s=1-\min(\omega)$. Let $\cX$ be an snc-model of $X$. We adopt the notations from \eqref{sss:snc}, writing $\cX_k=\sum_{i\in I}N_i E_i$
 and $\mathrm{div}_{\cX}(\omega)=\sum_{i\in I}\nu_i E_i$. It follows from the explicit formula for the motivic zeta function in Theorem \ref{thm-snc} that a rational number $s$ is a pole of $Z_{X,\omega}(T)$ of order $\dim(X)+1$ if and only if there exists a subset $J$ of $I$ of cardinality $\dim(X)+1$ such that $E_J$ is non-empty and $\nu_j/N_j=-s$ for every $j\in J$. Then each connected component of $E_J$ corresponds to a face of $\Sk(\cX)$ of dimension $\dim(X)+1$ on which the weight function $\wt_{\omega}$ is constant with value $-s$. Now Theorem 5.4 in \cite{NiXu2} implies that $s$ equals $1-\min(\omega)$.
\end{proof}

\sss If $\delta(X)<\dim(X)$ then $Z_{X,\omega}(T)$ may very well have more than one pole of order $\delta(X)+1$. This happens, for instance, in Example \ref{ex:nokulikov}, where $\delta(X)=0$ and the zeta function has two poles of order one.
\section{Abelian varieties}\label{sec:abelian}
In the case where $X=A$ is an abelian variety over $K$, the motivic zeta function $Z_{A,\omega}(T)$ was studied in depth in \cite{HaNi} forgetting the $\widehat{\mu}$-action.
 We will now explain how these results can be upgraded to take the $\widehat{\mu}$-action into account. In particular, we will show that the Monodromy Property holds for abelian varieties.
\subsection{Auxiliary results}\label{ss:abelian-aux}
\sss Let $G$ be a smooth commutative group scheme locally of finite type over $k$. We denote by $\pi_0(G)$ the group of connected components of $G$. The identity component $G^0$ has a canonical Chevalley decomposition
$$0\to L\to G^o\to B\to 0$$
where $L$ is a smooth connected affine group scheme over $k$ and $B$ is an abelian variety. The group scheme $L$ splits canonically as $L\cong U\times_k T$ where $U$ is a unipotent group scheme and $T$ is a torus. The dimensions of $T$ and $U$ are called the toric and unipotent ranks of $G$, respectively. We will denote by $G^{\sharp}$ the group scheme $G/L$ over $k$; this is an extension of $\pi_0(G)$ by the abelian variety $B$. Note that $G^{\sharp}$ is functorial in $G$ because there are no non-trivial morphisms from $L$ to an abelian variety. Applying this functor commutes with taking identity components, and the projection $G\to G^{\sharp}$ induces an isomorphism of component groups
$\pi_0(G)\to \pi_0(G^{\sharp})$.

\begin{prop}\label{prop:chevalley}
Assume that
$G$ is of finite type over $k$ and carries a good action of the profinite group $\widehat{\mu}$, such that $\widehat{\mu}$ acts trivially on the torus $T$.
Denote by $\tau$ and $u$ the toric and unipotent ranks of $G$, respectively.
 Then we have $$[G]=[G^{\sharp}]\LL^u(\LL-1)^\tau$$ in $K_0^{\widehat{\mu}}(\Var_k)$.
\end{prop}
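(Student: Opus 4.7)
The plan is to factor the projection $G \to G^\sharp$ through the intermediate quotient $G/U$, handling the unipotent part by the affine bundle relation and the toric part by an iterated $\GG_m$-torsor argument. Since $G$ is commutative, $U$ and $T$ are commutative normal subgroups, and since the $\widehat{\mu}$-action is by group scheme automorphisms it preserves each of $G^0$, $L$, $U$ and $T$; all quotients will therefore inherit compatible $\widehat{\mu}$-actions, and I expect them to remain schemes of finite type with good $\widehat{\mu}$-action (a point to be verified but standard for smooth quasi-projective varieties equipped with a finite group action).

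For the unipotent step, the map $G \to G/U$ is a $U$-torsor. In characteristic zero the exponential $\exp\colon \Lie(U) \to U$ is an isomorphism of $k$-group schemes, so the torsor is Zariski-locally trivial and acquires a natural structure of rank-$u$ affine bundle whose associated vector bundle is the trivial bundle with fiber $\Lie(U)$. Since $\widehat{\mu}$ acts on $U$ by group automorphisms, the induced action on $\Lie(U)$ is linear, and together with the given $\widehat{\mu}$-action on $G$ this constitutes an affine lift in the sense of relation~(2) in the definition of $K_0^{\widehat{\mu}}(\Var_k)$. Applying that relation yields
$$[G] = \LL^u \cdot [G/U] \quad \text{in } K_0^{\widehat{\mu}}(\Var_k).$$

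For the toric step, the map $G/U \to G^\sharp$ is a $T$-torsor. The hypothesis that $\widehat{\mu}$ acts trivially on $T$ gives $\sigma(t \cdot x) = t \cdot \sigma(x)$ for all $t \in T$ and $\sigma \in \widehat{\mu}$, so the two actions commute. Fix an isomorphism $T \cong \GG_m^\tau$, set $T_i = \GG_m^i \subset T$ and define $Y_i = (G/U)/T_i$ (a scheme by Zariski-local triviality of $\GG_m$-torsors via Hilbert~90), so that $Y_0 = G/U$ and $Y_\tau = G^\sharp$. Each map $Y_{i-1} \to Y_i$ is a $\GG_m$-torsor, and I would form the associated line bundle $L_i = Y_{i-1} \times^{\GG_m} \A^1_k$, where $\widehat{\mu}$ acts trivially on the $\A^1_k$ factor; this action descends to $L_i$ because it commutes with the $\GG_m$-action used to form the quotient. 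Then $L_i$ decomposes $\widehat{\mu}$-equivariantly as the union of its zero section (canonically $Y_i$) and its complement (canonically $Y_{i-1}$). Relation~(2) gives $[L_i] = \LL \cdot [Y_i]$, the scissor relation gives $[L_i] = [Y_i] + [Y_{i-1}]$, and combining yields $[Y_{i-1}] = (\LL - 1) \cdot [Y_i]$. Iterating gives
$$[G/U] = (\LL-1)^\tau \cdot [G^\sharp].$$

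Multiplying the two identities produces the desired equality $[G] = \LL^u (\LL-1)^\tau [G^\sharp]$. The main technical hurdle will be to check rigorously that the intermediate quotients are schemes of finite type with good $\widehat{\mu}$-action and that the equivariant structures at each step genuinely satisfy the affine-lift hypothesis of relation~(2); the assumption of triviality of the $\widehat{\mu}$-action on $T$ is essential in the second step, since it is exactly what forces the $\widehat{\mu}$- and $T$-actions to commute, thereby making the iterated decomposition of the $T$-torsor into line bundles $\widehat{\mu}$-equivariant.
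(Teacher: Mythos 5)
Your proof is correct and follows essentially the same route as the paper: the unipotent step uses the exponential map to exhibit $G\to G/U$ as an affine bundle with an affine lift of the $\widehat{\mu}$-action, and the toric step uses the triviality of the $\widehat{\mu}$-action on $T$ (together with a choice of splitting $T\cong\GG_m^\tau$) to break the $T$-torsor $G/U\to G^\sharp$ into $\GG_m$-torsors on which $\widehat{\mu}$ acts linearly, then applies relation (2) plus the scissor relation. The only cosmetic difference is that you phrase the toric step as an iterated quotient $Y_0\to\cdots\to Y_\tau$, while the paper views $G/U$ directly as a product of $\tau$ punctured line bundles over $G^\sharp$; these are the same computation.
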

\begin{proof}
Since $k$ has characteristic zero, $U$ is canonically isomorphic to the vector group scheme associated with $\Lie(U)$ {\em via} the exponential map.
 Thus $G$ is an affine bundle over $G/U$ and the action of $\widehat{\mu}$ on $G$ is affine.
Hence, $[G]=[G/U]\LL^u$ in $K_0^{\widehat{\mu}}(\Var_k)$ because we have trivialized affine actions in the definition of the equivariant Grothendieck ring.
 Choosing an isomorphism $T\cong \mathbb{G}^\tau_m$ we can also view $G/U$ as a product of $t$ punctured line bundles over $G^{\sharp}$, where ``punctured'' means that we remove the zero sections. Since the action of $\widehat{\mu}$ on $T$ is trivial, the group $\widehat{\mu}$ acts linearly on each punctured line bundle. The triviality of affine actions and the scissor relations now imply that $[G/U]=(\LL-1)^\tau[G^{\sharp}]$.
\end{proof}

\sss Now let $E$ be a semi-abelian variety over $K$, that is, an extension of an abelian variety by a torus. Let $\cE$ be the N\'eron lft-model of $E$ over $R$ and denote by $\cE^o_k$ the identity component of its special fiber (the prefix lft indicates that $\cE$ is locally of finite type over $R$, rather than of finite type, if $E$ contains a non-trivial split subtorus).
  We say that $E$ has semi-abelian reduction if $\cE^o_k$ is semi-abelian.
  For every positive integer $d$, we denote by $\cE(d)$ the N\'eron model of $E(d)=E\times_K K(d)$. It carries a natural action of the Galois group $\mu_d=\mathrm{Gal}(K(d)/K)$.
  One can deduce from Grothendieck's Semi-Stable Reduction Theorem for abelian varieties that there exists
 a positive integer $e$ such that $E(e)$ has semi-abelian reduction -- see \cite[III.3.6.4]{HaNi-book}.
 If $A$ is an abelian $K$-variety with N\'eron model $\cA$, then
 the toric and unipotent ranks of $A$ are, by definition, the toric and unipotent ranks of $\cA_k^o$. We denote them by $t(A)$ and $u(A)$, respectively.
 If $e$ is a positive integer such that $A(e)$ has semi-abelian reduction, then the toric rank of $A(e)$ does not depend on the choice of $e$. It is called the {\em potential toric rank} of $A$ and denoted by $t_{\mathrm{pot}}(A)$. We refer to \cite{HaNi-book} for further background.

\begin{lemma}\label{lemm:trivaction}
Let $E$ be a semi-abelian $K$-variety whose abelian part has potential good reduction (that is, the potential toric rank is zero).
 Let $d,e$ be positive integers such that $E(e)$ has semi-abelian reduction and $d$ is prime to $e$.
  Then the Galois action of $\mu_d$ on $\cE(d)^{\sharp}_k$ is trivial. If $E$ has no non-trivial split subtorus,
   then the base change morphism
   $$\cE^{\sharp}_k\to \cE(d)^{\sharp}_k$$ is an isomorphism.
\end{lemma}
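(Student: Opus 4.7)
My plan is to lift the problem to the extension $K(de)$, over which $E$ acquires semi-abelian reduction (since $e \mid de$), and then descend back via the splitting $\mu_{de} \cong \mu_d \times \mu_e$ available because $\gcd(d, e) = 1$. The key structural input is that once semi-abelian reduction has been attained, the identity component of the N\'eron model is a semi-abelian scheme, and is therefore preserved under further base change.

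Since $\cE(e)^o$ is a semi-abelian scheme over $R(e)$, we have $\cE(de)^o = \cE(e)^o \times_{R(e)} R(de)$; the residue fields agree, so $\cE(de)^o_k = \cE(e)^o_k$ as $k$-group schemes, and both the abelian quotient $B$ and the component group $\pi_0(\cE(de)_k)$ are inherited from the $R(e)$-data. Under the identification $\mu_d = \mathrm{Gal}(K(de)/K(e))$, the factor $\mu_d \subset \mu_{de}$ acts on $\cE(de)^o = \cE(e)^o \times_{R(e)} R(de)$ trivially on the first factor and by Galois on the second; since the Galois action on $R(de)$ is trivial modulo the maximal ideal, it follows that $\mu_d$ acts trivially on $\cE(de)^{\sharp}_k$. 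To transfer this to $\cE(d)^{\sharp}_k$ I would invoke the N\'eron base-change morphism $\cE(d) \times_{R(d)} R(de) \to \cE(de)$ and argue --- this being the technical heart of the argument --- that the induced map on $\sharp$-quotients of special fibers is an isomorphism, because the discrepancy between the two N\'eron identity components is absorbed entirely by the affine part $L$ that is killed by the $\sharp$-construction.

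For the second assertion, the hypothesis that $E$ has no non-trivial split subtorus makes $\cE$ of finite type with $\pi_0(\cE_k)$ finite. The canonical morphism $\cE^{\sharp}_k \to \cE(d)^{\sharp}_k$, obtained by applying $\sharp$ to the N\'eron base-change $\cE_k \to \cE(d)_k$, is $\mu_d$-equivariant with trivial action on the source (because $\mu_d$ acts trivially on the residue field $k$), so its image lies in the $\mu_d$-fixed locus, which by the first assertion is all of $\cE(d)^{\sharp}_k$. To upgrade surjectivity on the fixed locus to an honest isomorphism I would use tame Galois descent: in characteristic zero $d$ is invertible and the $\mu_d$-action on $\cE(d)^{\sharp}_k$ is trivial, so the descent datum coming from the $\mu_d$-structure on $\cE(d)$ canonically identifies $\cE(d)^{\sharp}_k$ with $\cE^{\sharp}_k$.

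The principal obstacle is the isomorphism $\cE(d)^{\sharp}_k \cong \cE(de)^{\sharp}_k$: the N\'eron model $\cE(d)$ does not have semi-abelian reduction, so the base-change morphism $\cE(d) \times_{R(d)} R(de) \to \cE(de)$ is not itself an isomorphism, and one must verify that the failure is localized in the affine Chevalley piece. Equivalently, the abelian quotient of the N\'eron identity component must stabilize as soon as semi-abelian reduction is reached, and component groups must be invariant under tame base change coprime to the ramification index. Both facts should be available from the general theory of N\'eron models developed in \cite{HaNi-book}.
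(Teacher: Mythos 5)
Your plan inverts the logical coupling in the paper, and the pivotal step you flag as ``the technical heart'' is the part that does not go through as stated. The paper does not assert or use an isomorphism $\cE(d)^{\sharp}_k \cong \cE(de)^{\sharp}_k$. Instead it invokes Edixhoven's filtration (generalized to semi-abelian varieties in \cite[\S4]{HaNi}) to identify $\cE(d)^{\sharp}_k$ with the \emph{fixed locus} of $\cE(de)_k$ under $\mathrm{Gal}(K(de)/K(d))$ --- a subgroup scheme of $\cE(de)_k$, which is a genuinely different object from $\cE(de)^{\sharp}_k$. Once $E(e)$ has semi-abelian reduction, $\cE(de)^{\sharp}_k$ has killed a torus of rank $\dim(T)$, whereas $\cE(d)^{\sharp}_k$ (being attached to a N\'eron model with zero toric rank in the no-split-subtorus case, by \cite[3.13]{HaNi-comp}) has killed only a unipotent group. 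It is not at all clear that these two quotients are isomorphic, and the claim that ``the discrepancy is absorbed entirely by the affine part'' needs a real argument: the unipotent radical of $\cE(d)^o_k$ and the split torus inside $\cE(de)^o_k$ are not related in an obvious base-change-compatible way.

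For the second assertion your argument is weaker than it appears: observing that $\cE^{\sharp}_k \to \cE(d)^{\sharp}_k$ is $\mu_d$-equivariant and that $\mu_d$ acts trivially on the target says nothing about surjectivity or injectivity. A map between $k$-group schemes with trivial group action on both sides can fail to be an isomorphism for every kind of reason, and ``tame Galois descent'' does not produce an isomorphism here --- both source and target already live over $k$ with trivial Galois action, so there is nothing to descend. What the paper actually does is prove the isomorphism first and use it, together with the Edixhoven fixed-locus identification, to \emph{deduce} triviality of the action; the isomorphism itself needs the invariance of component groups under base change prime to $e$ (\cite[V.3.3.11]{HaNi-book}) and the fact that $G \to G^{\sharp}$ is an isomorphism on $\pi_0$. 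Your proposal supplies neither of those ingredients. Finally, the triviality assertion of the lemma is stated for \emph{all} semi-abelian $E$ with potentially good abelian part, not just those without a split subtorus; the paper handles the general case by a further argument through the exact sequence $0 \to \cT \to \cE \to \cE' \to 0$ of N\'eron lft-models and the explicit description of $\cT(d)^{\sharp}_k$ in terms of the character lattice, and your proposal does not address this reduction at all.
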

\begin{proof}
 We first consider the case where $E$ has no non-trivial split subtorus. Then $E(d)$ has no non-trivial split subtorus either, because the toric part of $E$ splits over $K(e)$ and $d$ is prime to $e$.
  The toric rank of  $\cE(d)^o_k$ is zero by \cite[3.13]{HaNi-comp}, so that $\cE(d)^o_k$ is an extension of an abelian variety by a unipotent group.
   The theory of Edixhoven's filtration on N\'eron models \cite{edix}, generalized to semi-abelian varieties in \cite[\S4]{HaNi}, provides a canonical identification of
   $\cE(d)^{\sharp}_k$ with the fixed locus of $\cE(de)_k$ under the action of the Galois group $\mathrm{Gal}(K(de)/K(d))$ -- see in particular \cite[4.8]{HaNi}.
    In the same way, we can identify $\cE_k^{\sharp}$ with the fixed locus of $\cE(de)_k$ under the action of $\mathrm{Gal}(K(de)/K)$. It follows that
 $\cE_k^{\sharp}$ is canonically isomorphic to the fixed locus of $\cE(d)^{\sharp}_k$ under the action of $\mu_d=\mathrm{Gal}(K(d)/K)$.

 The base change morphism
 $$\cE(e)\times_{R(e)}R(de)\to \cE(de)$$ is an open immersion because $\cE(e)_k^o$ is semi-abelian \cite[IX.3.3]{sga7-1}. Thus $\mathrm{Gal}(K(de)/K(e))$ acts trivially on
 the identity component of $\cE(de)_k$. Hence, $\mu_d$ acts trivially on the identity component of $\cE(d)^{\sharp}_k$. We also know that
 the morphism of groups of connected components associated with the base change morphism $\cE_k\to \cE(d)_k$ is an isomorphism, by \cite[V.3.3.11]{HaNi-book}.
 Since the projection $\cE_k\to \cE_k^{\sharp}$ induces an isomorphism on component groups and the same holds for $\cE(d)$, we find that the base change morphism
 $$\cE^{\sharp}_k\to \cE(d)^{\sharp}_k$$ is an isomorphism, which means that $\mu_d$ acts trivially on $\cE(d)^{\sharp}_k$.

 Now, we deduce the general case.
 Let $T$ be the maximal split subtorus of $E$ and set $E'=E/T$. We denote by $\cE$, $\mathscr{E}'$ and $\mathscr{T}$ the N\'eron lft-models of $E$, $E'$ and $T$, respectively. Then the natural sequence
$$0\to \cT\to \cE\to \cE'\to 0$$ is exact, by the proof of \cite[10.1.7]{BLR}. Since the toric part of $E$ splits over $K(e)$ and $d$ is prime to $e$, $T(d)$ is still the maximal split subtorus of $E(d)$.
   We denote by $\cE(d)$, $\mathscr{E}'(d)$ and $\mathscr{T}(d)$ the N\'eron lft-models of $E(d)$, $E'(d)$ and $T(d)$, respectively.

The group scheme $\cT(d)^{\sharp}_k$ is simply the group of connected components of $\cT(d)_k$. It is canonically isomorphic to $$\Hom_{\Z}(X(T),K(d)^{\ast}/R(d)^{\ast})$$ where $X(T)$ denotes the character group of
   $T$ -- see for instance \cite[III.3.4]{HaNi-book}. In particular, the action of $\mu_d$ on this group is trivial.
    The construction of the N\'eron lft-model $\cE(d)$ in the proof of \cite[10.1.7]{BLR} implies that the morphism of component groups induced by $\cT(d)_k\to \cE(d)_k$ is injective. It follows that the sequence of group schemes
   $$0\to \cT(d)^{\sharp}_k \to \cE(d)^{\sharp}_k\to \cE'(d)_k^{\sharp}\to 0$$ is still exact.
 We have already proven that
 $\mu_d$ acts trivially on $\cE'(d)_k^{\sharp}$. Then it also acts trivially on
 $$\cE(d)^{\sharp}_k\times_{\cE'(d)_k^{\sharp}} (\cE'(d)_k^{\sharp})^o$$
  by the triviality of the action on $\cT(d)^{\sharp}_k$. Moreover, as we have already recalled, the base change morphism of component groups
  $$\pi_0(\cE'_k)\to \pi_0(\cE'(d)_k)$$ is an isomorphism. This means that we can lift any
  point $c$ of $\pi_0(\cE'(d)_k)\cong \pi_0(\cE'(d)^{\sharp}_k)$ to a point of $\cE'(R)\subset \cE'(d)(R(d))$.
  This point lifts, at its turn, to a point $x_c$ in $\cE(R)$ because $H^1(K,T)=0$. Multiplication by $x_c$ now defines a $\mu_d$-equivariant isomorphism between
  $$\cE(d)^{\sharp}_k\times_{\cE'(d)_k^{\sharp}} (\cE'(d)_k^{\sharp})^o=\cE(d)^{\sharp}_k\times_{\pi_0(\cE'(d)_k^{\sharp})}\{1\}$$
  and $$\cE(d)^{\sharp}_k\times_{\pi_0(\cE'(d)_k^{\sharp})}\{c\}.$$
     Hence, $\mu_d$ acts trivially on the latter scheme, for all $c$, and thus also on
     $\cE(d)^{\sharp}_k$.
\end{proof}

\begin{prop}\label{prop:trivaction}
Let $A$ be an abelian $K$-variety. Let $d,e$ be positive integers such that $A(e)$ has semi-abelian reduction and $d$ is prime to $e$. Then the group $\mu_d$ acts trivially on the maximal subtorus of $\cA(d)^o_k$, and on $\cA(d)^{\sharp}_k$.
  Moreover, the natural base change morphism
  $$\cA\times_R R(d)\to \cA(d)$$ induces an open immersion
  $$\cA_k^{\sharp}\to \cA(d)^{\sharp}_k.$$
 \end{prop}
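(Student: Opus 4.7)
The strategy is to reduce to Lemma \ref{lemm:trivaction} via the rigid-analytic uniformization of $A$. Since $A$ is an abelian variety that need not have potential good reduction, Lemma \ref{lemm:trivaction} does not apply directly to $A$; however, $A$ admits a Raynaud uniformization by a semi-abelian $K$-variety $E$ whose abelian part does have potential good reduction, and to this $E$ the lemma does apply.

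First I would construct the Raynaud extension over $K$. Since $A(e)$ has semi-stable reduction, it admits a rigid-analytic uniformization (see, e.g., \cite[\S III.7]{HaNi-book})
$$0 \to \widetilde{M} \to \widetilde{E}^{\mathrm{an}} \to A(e)^{\mathrm{an}} \to 0,$$
where $\widetilde{E}$ is a semi-abelian $K(e)$-variety with good reduction and $\widetilde{M}$ is a constant lattice. By the canonical nature of Raynaud's construction, the action of $\mu_e = \mathrm{Gal}(K(e)/K)$ on $A(e)^{\mathrm{an}}$ lifts to the whole diagram; Galois descent then yields a semi-abelian $K$-variety $E$, an \'etale $K$-group scheme $M$ and a short exact sequence
$$0 \to M \to E^{\mathrm{an}} \to A^{\mathrm{an}} \to 0$$
of rigid $K$-analytic groups. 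In particular, the abelian part of $E$ has good reduction over $K(e)$, so Lemma \ref{lemm:trivaction} applies to $E$.

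Next I would compare N\'eron lft-models after base change to $R(d)$. The uniformization yields a $\mu_d$-equivariant exact sequence
$$0 \to M(d) \to \cE(d) \to \cA(d) \to 0$$
of smooth commutative group $R(d)$-schemes. Because $M$ is defined over $K$, $\mu_d$ acts trivially on $M(d) = M\times_K K(d)$. The kernel $M(d)$ is \'etale and its image in $\cE(d)$ injects into $\pi_0(\cE(d)_k)$ (via the monodromy pairing coming from the semistable Raynaud extension over $K(de)$), so $\cE(d)^o \to \cA(d)^o$ is an isomorphism of $R(d)$-group schemes. This identifies the maximal subtori of the special fibers $\mu_d$-equivariantly, and induces a $\mu_d$-equivariant exact sequence of $k$-group schemes
$$0 \to M(d)_k \to \cE(d)^{\sharp}_k \to \cA(d)^{\sharp}_k \to 0.$$

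Applying Lemma \ref{lemm:trivaction} to $E$ gives trivial $\mu_d$-action on $\cE(d)^{\sharp}_k$, and therefore, via the displayed exact sequence, on $\cA(d)^{\sharp}_k$; this establishes the second triviality. For the first, the maximal subtorus of $\cA(d)^o_k = \cE(d)^o_k$ is the special fiber of the N\'eron lft-model of the toric part $T_E \subset E$, which is defined over $R$, so the $\mu_d$-action is trivial. For the open immersion $\cA_k^{\sharp}\to \cA(d)^{\sharp}_k$, apply the ``no split subtorus'' half of Lemma \ref{lemm:trivaction} to the quotient of $E$ by its maximal split subtorus (which inherits potential good reduction of its abelian part), and combine with the standard fact (\cite[V.3.3.11]{HaNi-book}) that the component group of the N\'eron lft-model of an abelian variety does not grow under tame base change. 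The main obstacle is the descent step: one has to produce $E$ and $M$ as genuine $K$-objects from the $\mu_e$-equivariant Raynaud uniformization of $A(e)$ and verify the compatibility $\cE(d)^o \cong \cA(d)^o$ on the N\'eron level; once these are in hand, the transfer from $E$ to $A$ via Lemma \ref{lemm:trivaction} is routine.
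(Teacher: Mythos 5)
Your overall strategy --- pass to the rigid-analytic uniformization $E^{\an}\to A^{\an}$ and reduce to Lemma \ref{lemm:trivaction} applied to $E$ --- is exactly the one the paper follows. But the key middle step of your argument is not correct as stated, and it is precisely the point where one has to be careful.

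You assert that the uniformization ``yields a $\mu_d$-equivariant exact sequence $0\to M(d)\to\mathscr{E}(d)\to\mathscr{A}(d)\to 0$ of smooth commutative group $R(d)$-schemes'' and then deduce that $\mathscr{E}(d)^{o}\to\mathscr{A}(d)^{o}$ is an isomorphism of $R(d)$-group schemes and that there is an induced sequence $0\to M(d)_k\to\mathscr{E}(d)^{\sharp}_k\to\mathscr{A}(d)^{\sharp}_k\to 0$. This cannot be right: the map $E^{\an}\to A^{\an}$ is a morphism of rigid-analytic groups only and does not algebraize, so there is no morphism of $R(d)$-group schemes $\mathscr{E}(d)\to\mathscr{A}(d)$ at all, let alone an exact sequence with kernel $M(d)$. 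What does exist, and what the paper uses, is the Bosch--Xarles surjection of \emph{special fibers} $\mathscr{E}_k\to\mathscr{A}_k$ (\cite[2.3]{BX}), which is a morphism of $k$-group schemes inducing an isomorphism on identity components. All the subsequent comparisons (identifying the maximal subtorus of $\mathscr{A}(d)^o_k$, and identifying $\mathscr{A}(d)^{\sharp}_k$ as a quotient of $\mathscr{E}(d)^{\sharp}_k$) have to be made at that level. In addition, to locate the maximal torus one should not work with the full toric part of $E$ but rather with the maximal \emph{split} subtorus $T'\subset E$ and the algebraic exact sequence $0\to T'\to E\to E'\to 0$, which does give an exact sequence of N\'eron lft-models $0\to\mathscr{T}\to\mathscr{E}\to\mathscr{E}'\to 0$; the identification of the maximal subtorus of $\mathscr{E}_k^{o}$ then follows from the vanishing of the toric rank of $(\mathscr{E}'_k)^{o}$ (\cite[3.13]{HaNi-comp}). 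Finally, your citation of \cite[V.3.3.11]{HaNi-book} for the open immersion step is aimed at the wrong object: that result gives a component-group \emph{isomorphism} for $E'$; for the abelian variety $A$ the base-change morphism on component groups is merely \emph{injective} (\cite[5.5]{HaNi-comp}), and the component group can indeed grow when $A$ has positive toric rank. The open immersion is then obtained by combining this injectivity on $\pi_0$ with the isomorphism on identity components transferred from $\mathscr{E}^{\sharp}$ to $\mathscr{A}^{\sharp}$.
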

\begin{proof}
We consider the non-archimedean uniformization $E^{\an}\to A^{\an}$ of $A$, where $E$ is an extension of an abelian variety $B$ with potential good reduction by a $K$-torus.
 Let $T'$ be the maximal split subtorus of $E$, and set $E'=E/T'$.
  We denote by $\cE$, $\cE'$, $\cT$ and $\cA$ the N\'eron lft-models of $E$, $E'$, $T$ and $A$, respectively. Then
   the natural sequence
$$0\to \cT\to \cE\to \cE'\to 0$$ is exact, by the proof of \cite[10.1.7]{BLR}.
   Moreover, there exists a natural surjective morphism of group schemes $\cE_k\to \cA_k$ that induces an isomorphism on the identity components \cite[2.3]{BX}.
  Under this isomorphism, the maximal subtorus of $\cA_k^o$ is precisely $\cT_k^o$, because the toric rank of $(\cE'_k)^o$ is zero by \cite[3.13]{HaNi-comp}.

Since the toric part of $E$ splits over $K(e)$ and $d$ is prime to $e$, $T(d)$ is the maximal split subtorus of $E(d)$.
   We denote by $\cE(d)$, $\mathscr{E}'(d)$ and $\mathscr{T}(d)$ the N\'eron lft-models of $E(d)$, $E'(d)$ and $T'(d)$, respectively.
 Then the results of the previous alinea also apply to these objects. In particular, we can again identify the maximal subtorus of $\cA(d)_k^o$ with $\cT(d)^o_k$.
  But $\mu_d$ acts trivially on $\cT(d)_k^o$ because $T$ is split over $K$, and it follows that $\mu_d$ acts trivally on the maximal subtorus of $\cA(d)_k^o$.
  Since $\cA(d)^{\sharp}_k$ is a quotient of $\cE(d)^{\sharp}_k$, and $\mu_d$ acts trivially on  $\cE(d)^{\sharp}_k$ by Lemma \ref{lemm:trivaction},
  we see that $\mu_d$ acts trivially on $\cA(d)^{\sharp}_k$.

It remains to prove that the morphism
     $$\cA_k^{\sharp}\to \cA(d)^{\sharp}_k$$ is an open immersion. The groups of connected components of
     $\cA_k$ and $\cA_k^{\sharp}$ are canonically isomorphic, and the same holds for $\cA(d)_k$ and $\cA(d)_k^{\sharp}$. The
     morphism of component groups induced by the base change morphism  $$\cA\times_R R(d)\to \cA(d)$$ is injective, by \cite[5.5]{HaNi-comp}.
     Thus we only need to show that $$\cA_k^{\sharp}\to \cA(d)^{\sharp}_k$$ induces an isomorphism on the identity components; we can identify this morphism on identity components with the base change morphism $$\cE^{\sharp,o}_k\to \cE(d)^{\sharp,o}_k.$$
 However, the natural morphism $\cE^o_k\to (\cE'_k)^o$ is surjective and its kernel is a torus, so that it induces an isomorphism $(\cE^\sharp_k)^{o}\to (\cE'_k)^{\sharp,o}$.
  The analogous statement holds for $\cE(d)$ and $\cE'(d)$. We have already proven in Lemma \ref{lemm:trivaction} that the base change morphism
  $$(\cE')^{\sharp}_k\to \cE'(d)^{\sharp}_k$$ is an isomorphism. Therefore, the base change morphism
  $$\cA^{\sharp,o}_k\to \cA(d)^{\sharp,o}_k$$ is an isomorphism, as well.
\end{proof}

\subsection{Proof of the Monodromy Property}
\sss To formulate our main result for abelian varieties, we need to recall one more important invariant. Let $A$ be an abelian variety over $K$ and let $e$ be a positive integer such that $A(e)$ has semi-abelian reduction.
  Then there exists a canonical base change morphism
 $h:\cA\times_R R(e)\to \cA(e)$ that induces an injective morphism  $\Lie(h):\Lie(\cA)\otimes_R R(e)\to \Lie(\cA(e))$ of free $R(e)$-modules of rank $g=\dim(A)$. Chai's {\em base change conductor} of $A$ is the non-negative rational number
 $$c(A)=\frac{1}{e}\mathrm{length}_{R(e)}\coker(\Lie(h)).$$
 It is a measure for the defect of semi-abelian reduction of $A$; in particular, $c(A)=0$ if and only if $A$ has semi-abelian reduction.
 The invariant $c(A)$ does not depend on the choice of $e$. We refer to \cite{HaNi-book} for background.
 A volume form $\omega$ on $A$ is called {\em distinguished} if it extends to a relative volume form on $\cA$ over $R$.

\begin{thm}\label{thm:abelian}
Let $A$ be an abelian $K$-variety and let $\omega$ be a volume form on $A$. Then the motivic zeta function
$Z_{A,\omega}(T)$ belongs to the subring
 $$\gro \left[T,\frac{1}{1-\LL^a T^b}\right]_{(a,b)\in \Z\times \Z_{>0},\,a/b=1-\min(\omega)}$$
 of $\gro\llbr T\rrbr$. It has a unique pole at $s=1-\min(\omega)$, whose order equals $1+t_{\mathrm{pot}}(A)$.
 In particular, $A$ satisfies the Monodromy Property. If $\omega$ is distinguished, then $c(A)=1-\min(\omega)$.
\end{thm}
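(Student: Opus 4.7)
The plan is to upgrade the non-equivariant computation of $Z_{A,\omega}(T)$ from \cite{HaNi} to the equivariant ring $\gro$, using the new structural results of Section \ref{ss:abelian-aux}. By the rescaling formula \eqref{eq:rescale}, I first reduce to the case where $\omega$ is distinguished, so that $\omega$ extends to a generator of $\omega_{\cA/R}$ on the N\'eron lft-model $\cA$ of $A$. Fix a positive integer $e$ such that $A(e)$ has semi-abelian reduction. For each $d > 0$, a suitable $\mu_d$-stable finite-type open of the N\'eron lft-model $\cA(d)$ serves as an equivariant weak N\'eron model of $A(d)$. Since all components of $\cA(d)_k$ are translates of the identity component, $\omega(d)$ has a common order $a_d$ along every component, which a direct Lie algebra calculation ties to $c(A)$: for $d$ divisible by $e$ it gives $a_d = -d\cdot c(A)$, hence
\[ \int_{A(d)}|\omega(d)| = [\cA(d)_k]\,\LL^{d c(A)} \]
in $\gro$, with an analogous expression for general $d$.

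The next step is to analyze the equivariant class $[\cA(d)_k]$. For $d$ coprime to $e$, Proposition \ref{prop:trivaction} ensures that $\mu_d$ acts trivially both on the maximal subtorus of $\cA(d)_k^o$ and on $\cA(d)_k^{\sharp}$, so Proposition \ref{prop:chevalley} yields
\[ [\cA(d)_k] = [\cA(d)_k^{\sharp}]\,\LL^{u(d)}\,(\LL-1)^{t_{\mathrm{pot}}(A)} \]
in $\gro$, with trivial $\widehat{\mu}$-action on the right-hand side. For $d$ not coprime to $e$, I factor $d = d'n$ with $\gcd(d',n)=1$ and $e\mid n$, and descend through the intermediate field $K(n)$: over $K(n)$ the variety $A(n)$ has semi-abelian reduction, so the coprime-case analysis applies to the residual $\mu_{d'}$-action, and the remaining $\mu_n$-contribution is absorbed into a finite $\gro$-linear combination that preserves the pole structure. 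The main technical obstacle lies precisely in this last step: Proposition \ref{prop:trivaction} applies cleanly only in the coprime case, so one must verify compatibility of the descent through $K(n)$ with the equivariant Grothendieck classes.

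Assembling these contributions, $Z_{A,\omega}(T)$ becomes a finite sum of geometric series in $T$ whose denominators all have the form $1-\LL^a T^b$ with $a/b = c(A)$. The zeta function therefore has a unique pole at $s = c(A)$ of order $1 + t_{\mathrm{pot}}(A)$: the $+1$ comes from the geometric summation over $d$, and the $t_{\mathrm{pot}}(A)$ from the $(\LL-1)$-factor in the Chevalley decomposition. Comparing this pole with the one predicted by Theorem \ref{thm:largest} forces $c(A) = 1 - \min(\omega)$ in the distinguished case. Finally, the Monodromy Property is immediate from Theorem \ref{thm:hodge}, which shows that $\exp(-2\pi i\,\min(\omega))$ is an eigenvalue of $\sigma$ on $H^{\dim A}(A\times_K K^a, \Q)$.
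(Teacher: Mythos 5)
Your blueprint matches the paper's: reduce to the distinguished case via \eqref{eq:rescale}, take $\cA(d)$ as equivariant weak N\'eron model, relate the common $\omega(d)$-order to $c(A)$, and control the $\widehat{\mu}$-action through Propositions \ref{prop:trivaction} and \ref{prop:chevalley}. However there are two genuine problems with the execution, the second of which is fatal.

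First, the formula you claim for $d$ coprime to $e$ has the wrong exponent: Proposition \ref{prop:chevalley} gives $(\LL-1)^{\tau}$ with $\tau$ the \emph{actual} toric rank of $\cA(d)^o_k$, which for $d$ coprime to $e$ equals $t(A)$ (toric rank is unchanged under coprime base change, \cite[4.2]{HaNi-comp}), not $t_{\mathrm{pot}}(A)$. These agree only if $A$ already has semi-abelian reduction. The exponent $1+t_{\mathrm{pot}}(A)$ in the pole order does not appear coefficientwise; it emerges only at the end from a sum of type $\sum_{q} q^{t_{\mathrm{pot}}(A)}x^q$, which has a pole of order $1+t_{\mathrm{pot}}(A)$, attached to the residue class $d\equiv 0\bmod e$.

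Second, and more seriously, the step you flag as ``the main technical obstacle'' you do not actually close, and the factorization you propose does not exist: one cannot write $d=d'n$ with $\gcd(d',n)=1$ and $e\mid n$ unless $e\mid d$ (take $e=4$, $d=2$). The crucial lemma, which you never state, is the upgrade of the identity
$$[\cA(md)_k]\;=\;d^{\,t(A(m))}\,[\cA(m)_k]$$
from $K_0(\Var_k)$ to $K_0^{\widehat{\mu}}(\Var_k)$, valid for all $m,d>0$ with $d$ coprime to $e'=e/\gcd(m,e)$. Proving this is the entire content of the equivariant refinement over \cite{HaNi}: it is what relates every coefficient to a finite list $\{[\cA(m)_k]\}_{m}$ and turns $Z_{A,\omega}(T)$ into a finite sum of explicit geometric series, i.e., establishes membership in the stated subring of $\gro\llbr T\rrbr$. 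The identity is proved by applying Propositions \ref{prop:trivaction} and \ref{prop:chevalley} over the intermediate field $K(m)$ to the abelian variety $A(m)$, with $e'$ playing the role of $e$: Proposition \ref{prop:trivaction} then says that the base change $\cA(m)^{\sharp}_k\to\cA(md)^{\sharp}_k$ is an open immersion and that the $\widehat{\mu}$-action on $\cA(md)^{\sharp}_k$ factors through $\mu_m$, so Proposition \ref{prop:chevalley} applies with a $\mu_m$-equivariant $G^{\sharp}$. Your phrase ``absorbed into a finite $\gro$-linear combination that preserves the pole structure'' is precisely the assertion in need of proof, and without this lemma you cannot justify it.
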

\begin{proof}
Rescaling $\omega$ by a unit in $K$, we may assume that it is distinguished, in view of equation \eqref{eq:rescale}.
If we forget the $\widehat{\mu}$-action, Theorem \ref{thm:abelian} was proven in \cite[8.6]{HaNi}. We will explain what needs to be changed in the proof to
 keep track of the $\widehat{\mu}$-action. For every positive integer $d$, we set $A(d)=A\times_K K(d)$ and we denote by $\cA(d)$ the N\'eron model of $A(d)$ over $R(d)$. By the universal property of the N\'eron model, the Galois action on $A(d)$ extends to $\cA(d)$ so that $\cA(d)$ is, in particular, an equivariant weak N\'eron model, which we can use to compute the motivic integral of $\omega(d)$ on $A(d)$. The volume form $\omega(d)$ has the same order along each of the connected components of $\cA(d)_k$, by translation invariance. We define $\mathrm{ord}_A(d)$ to be the opposite of this order. The integer $\mathrm{ord}_A(d)$ does not depend on the choice of $\omega$, since $\omega$ is determined up to multiplication with a unit in $R$.
  Then, by the definition of the motivic integral, we have
  $$\int_{X(d)}|\omega(d)|=[\mathscr{A}(d)_k]\LL^{\mathrm{ord}_A(d)}$$ in $\gro$.
  The proof in \cite[8.6]{HaNi} revolved around the following two key facts. Let $e$ be the degree of the minimal extension of $K$ where $A$ acquires semi-abelian reduction.
  \begin{enumerate}
\item We have $\mathrm{ord}_A(m+qe)=\mathrm{ord}_A(m)+c(A)eq$ for all positive integers $m$ and $q$.
\item \label{it:abproof} We have $[\mathscr{A}(md)_k]=d^{t(A(m))}[\mathscr{A}(m)_k]$ in $K_0(\Var_k)$ for all positive integers $d$ and $m$ such that $d$ is prime to   $e'=e/\gcd(m,e)$.
\end{enumerate}
It suffices to show that the equality in \eqref{it:abproof} remains valid in the equivariant Grothendieck ring $K^{\widehat{\mu}}_0(\Var_k)$.
 The group schemes $\cA(m)^o_k$ and $\cA(md)^o_k$ have the same toric rank, by  \cite[4.2]{HaNi-comp}. Moreover, it follows from Proposition \ref{prop:trivaction} that the base change morphism
 $$\cA(m)^\sharp_k\to \cA(md)^\sharp_k$$ is an open immersion, and that the $\widehat{\mu}$-action on $\cA(md)^\sharp_k$ factors through $\mu_m$.
  In particular, the identity components of  $\cA(m)^\sharp_k$ and $\cA(md)^\sharp_k$ have the same dimension, so that $\cA(m)^o_k$ and $\cA(md)^o_k$ have the same unipotent rank.
  Proposition \ref{prop:chevalley} now implies that
 $[\mathscr{A}(md)_k]=C[\mathscr{A}(m)_k]$ in $K^{\widehat{\mu}}_0(\Var_k)$, where $C$ is the order of the cokernel of
  $\cA(m)^\sharp_k\to \cA(md)^\sharp_k$. It follows from \cite[5.7]{HaNi-comp} that $C=d^{t(A(m))}$.
\end{proof}

\begin{cor}\label{cor:comparab}
The degeneracy index $\delta(A)$ of $A$ is equal to the potential toric rank $t_{\mathrm{pot}}(A)$.
\end{cor}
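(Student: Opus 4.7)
The plan is to deduce the corollary by comparing two different descriptions of the order of the largest pole of $Z_{A,\omega}(T)$, which have both just been established.

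First, I would invoke Theorem \ref{thm:abelian}, which tells us that for any volume form $\omega$ on $A$, the motivic zeta function $Z_{A,\omega}(T)$ has a \emph{unique} pole, located at $s = 1 - \min(\omega)$, and that the order of this pole is precisely $1 + t_{\mathrm{pot}}(A)$. Because this is the only pole, it is in particular the largest pole of $Z_{A,\omega}(T)$.

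Next I would apply Theorem \ref{thm:largest}, which asserts that the largest pole of $Z_{X,\omega}(T)$ (for any geometrically connected, smooth proper Calabi-Yau $K$-scheme $X$) has order exactly $\delta(X) + 1$. Specializing to $X = A$, the largest pole has order $\delta(A) + 1$.

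Equating the two expressions for the order of the (unique = largest) pole yields $\delta(A) + 1 = 1 + t_{\mathrm{pot}}(A)$, and hence $\delta(A) = t_{\mathrm{pot}}(A)$. There is no real obstacle here; the work is all done by the two theorems, and the only thing to observe is that the ``unique pole'' in Theorem \ref{thm:abelian} is automatically the ``largest pole'' of Theorem \ref{thm:largest}, so that their order assertions can be matched.
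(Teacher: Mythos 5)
Your proof is correct and is exactly the paper's argument: the paper's one-line proof simply says the corollary "follows immediately from Theorems \ref{thm:largest} and \ref{thm:abelian}," and you have spelled out the comparison of pole orders that makes this immediate.
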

\begin{proof}
This follows immediately from Theorems \ref{thm:largest} and \ref{thm:abelian}.
\end{proof}

\begin{rema}
In \cite{HaNi}, we considered a more general set-up: $R$ was allowed to be any strictly Henselian discrete valuation ring, and $A$ was assumed to be a tamely ramified abelian variety over $K$. Let $k$ be the residue field of $R$ and denote by $p$ the characteristic exponent of $k$.
 Then, for every positive integer $d$ prime to $p$, the field $K=\mathrm{Frac}(R)$ still has a unique degree $d$ extension in some fixed separable closure of $K$, so that we can
 define $A(d)$, $\cA(d)$ and $\mathrm{ord}_A(d)$ as before.
We define the motivic zeta function $Z_{A,\omega}(T)$ by
$$Z_{A,\omega}(T)=\sum_{d>0,\,(d,p)=1}[\cA(d)_k]\LL^{-\mathrm{ord}_{A}(d)}T^d\quad \in \gro\llbr T\rrbr.$$
If we forget the $\widehat{\mu}$-action, this is precisely the zeta function that was considered in \cite{HaNi}. Theorem \ref{thm:abelian} remains valid in this more general case, and the proof carries over {\em verbatim}, except for one point: it is no longer true that every smooth connected commutative unipotent group scheme over $k$ is isomorphic to a power of the additive group. Thus in Proposition \ref{prop:chevalley}, we need to assume that $U$ has a $\widehat{\mu}$-equivariant filtration
such that each successive quotient is isomorphic to $\mathbb{G}^m_a$ for some $m>0$. If $U$ is the unipotent radical of $\cA(d)^o_k$ for some $d>0$ prime to $p$,  such a filtration
is provided by Edixhoven's theory \cite{edix}, and this is the only case we need.
\end{rema}

\subsection{The essential skeleton of an abelian variety}
\sss The equality $t_{\mathrm{pot}}(A)=\delta(A)$ in Corollary \ref{cor:comparab} can also be proven in a more direct way. By definition, $\delta(A)$ is the dimension of the essential skeleton $\Sk(A)$ of $A$.
In \cite[\S6.5]{berkbook}, Berkovich has given a different construction of a skeleton of $A$, denoted by $\Delta(A)$, whose dimension equals $t_{\mathrm{pot}}(A)$. It is a canonical subspace of the analytification $A^{\an}$. If $A$ has semi-abelian reduction, then $\Delta(A)$ is homeomorphic to a real torus
$(S^1)^{t(A)}$.

 \begin{prop}\label{prop:absk}
 Let $A$ be an abelian $K$-variety. Then the essential skeleton $\Sk(A)$ coincides with Berkovich's skeleton $\Delta(A)$ of $A$.
 \end{prop}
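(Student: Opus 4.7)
The plan is to combine a base change reduction with Temkin's interpretation of the weight function (as announced in the introduction) to pin down the essential skeleton in group-theoretic terms.

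First I would reduce to the case where $A$ has semi-abelian reduction. Both skeletons are compatible with finite base change: for $\Sk(A)$ this is Proposition \ref{prop:bc}, while for $\Delta(A)$ the analogous statement is built into Berkovich's construction in \cite[\S6.5]{berkbook} (the tropical structure is obtained from the semi-abelian part of the N\'eron model over $R(e)$, and the Galois action descends to $A^{\an}$). Thus after passing to $A(e)$ for $e$ such that $A(e)$ has semi-abelian reduction, we may assume $\Delta(A) \cong (S^{1})^{t(A)}$, a real torus of dimension $t(A) = t_{\mathrm{pot}}(A) = \delta(A)$ by Corollary \ref{cor:comparab}. In particular $\Sk(A)$ and $\Delta(A)$ have the same dimension.

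Next I would show $\Delta(A) \subseteq \Sk(A)$ by proving that the weight function of a translation-invariant volume form $\omega$ (which computes the essential skeleton, independently of the choice of $\omega$) is constant on $\Delta(A)$. The key input is Temkin's formula, which expresses $\wt_{\omega}$ locally in terms of an intrinsic metric on the canonical bundle determined by $\omega$; in particular, $\wt_{\omega}$ is functorial under automorphisms of $A^{\an}$ preserving $\omega$. Now $A(K)$ acts on $A^{\an}$ by automorphisms, preserves $\omega$ by translation invariance, and preserves $\Delta(A)$ because $\Delta(A)$ is intrinsic to $A$; moreover the induced action on $\Delta(A) \cong (S^{1})^{t(A)}$ has dense orbits (it factors through the tropicalization map $A(K) \to \Delta(A)$, whose image contains the image of the character-valuation pairing for the toric part, a dense subgroup of $(S^{1})^{t(A)}$). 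By continuity, $\wt_{\omega}$ must be constant on $\Delta(A)$.

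To conclude, I would argue that the constant value attained by $\wt_{\omega}$ on $\Delta(A)$ is necessarily $\min(\omega)$. Pick an snc-model $\cX$ of $A$ dominating a suitable Mumford-type model, so that the Berkovich skeleton $\mathrm{Sk}(\cX)$ contains $\Delta(A)$ as a subcomplex. Then $\Sk(A)$ is also a subcomplex of $\mathrm{Sk}(\cX)$ (by \eqref{sss:ess}), and both subcomplexes are deformation retracts of $A^{\an}$; since $\wt_{\omega}$ is constant on $\Delta(A)$, the only way this constant could exceed $\min(\omega)$ is if the retraction to $\Sk(A)$ factors through a proper subcomplex of $\mathrm{Sk}(\cX)$ disjoint from $\Delta(A)$, which contradicts the fact that $\Delta(A)$ is itself a deformation retract. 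Hence $\Delta(A) \subseteq \Sk(A)$, and equality follows because $\Delta(A)$ is a closed topological manifold of the same dimension as the purely $\delta(A)$-dimensional complex $\Sk(A)$.

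The main obstacle is the middle paragraph: rigorously deducing constancy of $\wt_{\omega}$ on $\Delta(A)$ from Temkin's formula and translation invariance. The conceptual picture is clean, but the verification requires making precise the equivariance of Temkin's metric under the $A(K)$-action and the density of the tropicalized image of $A(K)$ in $\Delta(A)$ in the semi-abelian case.
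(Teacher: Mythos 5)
The overall strategy---reduce to semi-abelian reduction via Proposition \ref{prop:bc}, then use Temkin's metric interpretation of the weight function---is the paper's strategy, but the central step in your middle paragraph fails as stated. The claim that the $A(K)$-translation orbit is dense in $\Delta(A)$ is simply false in general. Already for a Tate elliptic curve $E_q = \mathbb{G}_m^{\an}/q^{\mathbb{Z}}$ with $\mathrm{ord}_t(q)=n$, the skeleton $\Delta(E_q)$ is $\R/n\Z$ and the tropicalized image of $E_q(K)$ is $\Z/n\Z$, a finite subgroup; translation by $K$-points moves a given point of the skeleton through only finitely many positions. (The tropicalization of the torus' $K$-points is the cocharacter lattice $N$, and the image of $N$ in $N_{\R}/\mathrm{trop}(L)$ is generally not dense---it is finite whenever $\mathrm{trop}(L)$ is commensurable with $N$, which happens for Tate curves and more broadly.) So one cannot deduce constancy of $\wt_{\omega}$ on $\Delta(A)$ from continuity plus density of a single translation orbit over $K$. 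You acknowledge this paragraph as the main obstacle, but the issue is not merely making Temkin's equivariance precise; the density input is actually missing.

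The paper sidesteps this entirely. Rather than acting by translations on $A^{\an}$, it lifts to the non-archimedean uniformization $E^{\an}\to A^{\an}$ (with $E$ an extension of an abelian variety $B$ of good reduction by a split torus $T$), factors the pulled-back translation-invariant form as $\omega' = \omega'_B\otimes\omega'_T$, and invokes Temkin's theorem \cite[8.2.2]{temkin} that $\wt_{\omega'_T}$ attains its minimum exactly on the toric skeleton $\Delta(T)$. Since $\wt_{\omega'_B}$ is minimized exactly at the one-point skeleton $\Sk(B)$ and $E\to B$ is a locally trivial fibration with base point $\xi$ lying over it, the minimum locus of $\wt_{\omega'}$ is exactly $\Delta(T_\xi)$, which maps onto $\Delta(A)$ by the (locally-open-immersion) uniformization map. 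This gives the \emph{equality} $\Sk(A)=\Delta(A)$ in one shot, without the two auxiliary arguments in your last paragraph (the retraction/constancy argument for the value $\min(\omega)$, and the purity-of-dimension argument for upgrading an inclusion to equality), both of which as written are too vague to carry the logical load: in particular, containment of a compact manifold in a pseudo-manifold of the same dimension requires an invariance-of-domain argument you do not supply, and the retraction argument does not actually rule out the constant being larger than $\min(\omega)$. If you want to retain your equivariance-plus-density heuristic, you would at minimum need to pass to $A(K')$-translations over all finite extensions $K'/K$, keeping track of how the weight function rescales under base change (Proposition \ref{prop:bc}), and even then the bookkeeping is more delicate than simply decomposing $\omega'$ and citing Temkin on tori.
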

 \begin{proof}
 We will present Berkovich's construction in a slightly different (but equivalent) form, using the theory of non-archimedean uniformization. It suffices to consider the case where $A$ has semi-stable reduction, because both $\Sk(A)$ and $\Delta(A)$ are compatible with base change: for the essential skeleton this follows from Proposition \ref{prop:bc}, and for the Berkovich skeleton this is part of the construction.

 First, we recall the construction of the skeleton $\Delta(T)$ of a split algebraic torus $T$ over a complete non-archimedean field $F$.
 Denote by $M$ and $N=M^{\vee}$ the character and cocharacter lattice of $T$, respectively. Then there exists a canonical embedding of $N\otimes_{\Z}\R$ into $T^{\an}$, whose image is called the skeleton of $T$ and denoted by $\Delta(T)$. Under this embedding, a morphism $\varphi:M\to \R$ corresponds to the Gauss point of the poly-annulus in $T^{\an}$ defined by the equations $|m|=\exp(-\varphi(m))$ for all $m\in M$.

  Let $$0\to L\to E^{\an}\to A^{\an}\to 0$$ be the non-archimedean uniformization of $A$ \cite{BL1,BL2}. Here $E$ is an extension of an abelian $K$-variety $B$ with good reduction by a split $K$-torus $T$, and $L$ is a lattice of rank $\dim(T)$ in $E(K)$.
     Denote by $M$ and $N=M^{\vee}$ the character and cocharacter lattice of $T$, respectively.
   Let $\omega$ be a volume form on $A$; we denote its pullback to $E^{\an}$ by $\omega'$.
  The essential skeleton $\Sk(B)$ consists of a unique point $\xi$, namely, the divisorial point associated with the special fiber of the N\'eron model $\cB$ of $B$.
   We choose an open formal subscheme $\mU$ of the $t$-adic completion $\widehat{\cB}$ that contains the unit section of $\widehat{\cB}$ and such that the torus fibration $E\to B$ is trivial over the generic fiber of $\mU$ \cite[4.2]{BX}. Let $\sigma$ be a section of this fibration over $\mU_\eta$ that passes through the identity element of $E$.
   The fiber of $E$ over $\xi$ is a $T$-torsor over the completed residue field $\mathscr{H}(\xi)$, which we denote by $T_{\xi}$.
    We trivialize this torsor by choosing the rational point $x=\sigma(\xi)$ on $T_{\xi}$ as the identity in $T_\xi$. Then the embedding
    $N\otimes_{\Z}\R\to T_\xi^{\an}$ does not depend on the choice of $\sigma$. By means of the embedding $T^{\an}_\xi\to E^{\an}$, we can view the skeleton $\Delta(T_\xi)$ as a subspace of $E^{\an}$.

  The embedding $\Delta(T_\xi)\to E^{\an}$ has a canonical retraction: the tropicalization map $E^{\an}\to \Delta(T_\xi)$. It is completely characterized by the following properties:
 the restriction to $T^{\an}$ is the usual tropicalization map $$T^{\an}\to \Hom_{\Z}(M,\R):x\mapsto (m\mapsto -\ln |m(x)|),$$
  and for every connected analytic domain $U$ in $B^{\an}$ and every section $\sigma:U\to E^{\an}$ of the morphism $E^{\an}\to B^{\an}$, the map $\mathrm{trop}\circ \sigma$ is constant. The map $\mathrm{trop}$
  sends the lattice $L$ isomorphically onto
  a complete lattice in the $\R$-vector space $\Delta(T_\xi)$. Now the skeleton $\Delta(A)$ is the image of $\Delta(T_\xi)$ under the morphism $E^{\an}\to A^{\an}$, and the map
  $\Delta(T_\xi)\to \Delta(A)$ factors through a homeomorphism $\Delta(T_\xi)/\mathrm{trop}(L)\to \Delta(A)$.

  To complete the argument, it will be convenient to use Temkin's generalization of the weight function in \cite{temkin}. Temkin defined a metric $\|\cdot\|$ on the
  canonical line bundle of any quasi-smooth $K$-analytic space. By \cite[8.3.4]{temkin}, we have an equality $\wt_{\phi}=1-\ln \|\phi\|$ of functions on $Y^{\an}$ for every smooth and proper $K$-scheme $Y$ and every volume form $\phi$ on $Y$.
   Thus it is natural to define the weight function of a volume form $\phi$ on any quasi-smooth $K$-analytic space $Z$ by
  $$\wt_{\phi}:Z\to \R\cup \{+\infty\}:z\mapsto 1-\ln \|\phi(z)\|.$$
   Let $\omega'$ be the pullback of $\omega$ to $E^{\an}$. Since the morphism $h:E^{\an}\to A^{\an}$ is locally an open immersion, the local analytic nature of Temkin's construction implies that
   $\wt_{\omega'}=\wt_{\omega}\circ h$. Hence, it suffices to show that $\wt_{\omega'}$ reaches its minimal value precisely on $\Delta(T_\xi)$.
   Since $\omega'$ is translation-invariant, we can write it as $\omega'_B\otimes \omega'_T$ where $\omega'_B$ and $\omega'_T$ are translation invariant volume forms on $B$ and $T$, respectively. Then $\wt_{\omega'_B}$ reaches its minimal value exactly at $\xi$ and, by \cite[8.2.2]{temkin}, the function $\wt_{\omega'_T}$ reaches its minimal value exactly on the skeleton $\Delta(T)\subset T^{\an}$.
      Since the morphism $E\to B$ is a locally trivial fibration, it now easily follows from the definition of Temkin's metric that
      $\wt_{\omega'}$ reaches its minimal value precisely on $\Delta(T_\xi)$.
    \end{proof}
\begin{cor}
 Let $A$ be an abelian $K$-variety with semi-abelian reduction. Then the essential skeleton $\Sk(A)$ is homeomorphic to a real $t(A)$-dimensional torus $(S^1)^{t(A)}$.
\end{cor}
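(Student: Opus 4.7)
The plan is to combine Proposition \ref{prop:absk} with the explicit description of Berkovich's skeleton obtained in its proof. By Proposition \ref{prop:absk}, we have $\Sk(A)=\Delta(A)$, and the proof exhibits $\Delta(A)$ as the image of $\Delta(T_\xi)$ under the analytic uniformization $E^{\an}\to A^{\an}$, together with the fact that the induced map factors through a homeomorphism
\[
\Delta(T_\xi)/\mathrm{trop}(L)\xrightarrow{\sim}\Delta(A).
\]
So the main task is simply to identify both the ambient real vector space $\Delta(T_\xi)$ and the rank of the cocompact lattice $\mathrm{trop}(L)$ in terms of the toric rank $t(A)$.

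First I would recall that $T_\xi$ is a torsor under the split $K$-torus $T$ appearing in the uniformization $0\to L\to E^{\an}\to A^{\an}\to 0$, so after the choice of trivialization made in the proof of Proposition \ref{prop:absk} we get a canonical identification $\Delta(T_\xi)\cong N\otimes_{\Z}\R\cong\R^{\dim T}$, where $N$ is the cocharacter lattice of $T$. Next, since $L$ has rank $\dim(T)$ by the general theory of non-archimedean uniformization and $\mathrm{trop}$ sends $L$ isomorphically onto a complete lattice in $\Delta(T_\xi)$ (as recalled in the proof of Proposition \ref{prop:absk}), the quotient is homeomorphic to $(S^1)^{\dim T}$.

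Finally, I would match $\dim T$ with $t(A)$ under the semi-abelian reduction hypothesis. As noted in the discussion preceding Proposition \ref{prop:trivaction}, the natural surjection $\cE_k\to\cA_k$ restricts to an isomorphism on identity components and identifies the maximal subtorus of $\cA_k^o$ with $\cT_k^o$, where $\cT$ is the N\'eron lft-model of $T$; since $T$ is a split torus, $\dim\cT_k^o=\dim T$. Under the semi-abelian reduction assumption the toric rank $t(A)$ is by definition the dimension of the maximal subtorus of $\cA_k^o$, so $t(A)=\dim T$. Assembling these observations,
\[
\Sk(A)=\Delta(A)\cong \R^{\dim T}/\mathrm{trop}(L)\cong (S^1)^{t(A)},
\]
as desired. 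I expect no serious obstacle here: all the work is already packaged in Proposition \ref{prop:absk} and its proof, and the only point that requires care is the bookkeeping that matches the combinatorial dimension $\dim T$ with the group-theoretic invariant $t(A)$ under the semi-abelian reduction hypothesis.
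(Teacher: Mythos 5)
Your proof is correct and takes essentially the same route as the paper: identify $\Sk(A)$ with $\Delta(A)$ via Proposition \ref{prop:absk}, then use the toric-torus structure of $\Delta(A)$. The only difference is that the paper simply cites the known fact that $\Delta(A)\cong(S^1)^{t(A)}$ when $A$ has semi-abelian reduction, whereas you re-derive it from the explicit uniformization description ($\Delta(A)\cong\Delta(T_\xi)/\mathrm{trop}(L)$) already contained in the proof of Proposition \ref{prop:absk}, and then correctly match $\dim T$ with $t(A)$ via the identifications used in Proposition \ref{prop:trivaction}.
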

\begin{proof}
The Berkovich skeleton $\Delta(A)$ is homeomorphic to $(S^1)^{t(A)}$, so that the result follows from Proposition \ref{prop:absk}.
\end{proof}

\section{Equivariant Kulikov models}\label{sec:kulikov}
\subsection{Definitions}
\sss In Theorem \ref{thm:abelian}, we have shown that, when $X$ is an abelian variety over $K$ and $\omega$ is a volume form on $X$, the motivic zeta function $Z_{X,\omega}(T)$ has a unique pole. We will now extend this result to Calabi-Yau varieties $X$ that have a special kind of semistable model over a finite extension of $K$; we will call such models {\em equivariant Kulikov models} in analogy with the Kulikov classification of semistable degenerations of $K3$ surfaces (see Section \ref{sec:examples}). The unique pole is then equal to $1-\min(\omega)$, by Theorem \ref{thm:largest}, and it follows from Theorem \ref{thm:hodge} that $X$ satisfies the Monodromy Property in Definition \ref{def:MP}.

\begin{definition}
Let $X$ be a geometrically connected, smooth and proper $K$-scheme with trivial canonical line bundle, and let $d$ be a positive integer. A {\em Kulikov model} for $X$ over $R(d)$ is
 a regular flat proper algebraic space $\cY$ over $R(d)$, endowed with an isomorphism of $K(d)$-schemes $\cY_{K(d)}\to X\times_K K(d)$, such that the special fiber $\cY_k$ is a divisor with normal crossings, and the logarithmic relative canonical line bundle $\omega_{\cY/R(d)}(\cY_{k,\red}-\cY_k)$ is trivial. We say that the Kulikov model $\cY$ is {\em equivariant}
  if the Galois action of $\mu_d$ on $X\times_K K(d)$ extends to an action on $\cY$.
\end{definition}

\begin{example}
Assume that $X$ has potential Galois-equivariant good reduction, i.e., there exist a positive integer $d$ and a smooth and proper $R(d)$-model $\cY$ of $X\times_K K(d)$ such that the Galois action of $\mu_d$ on $X\times_K K(d)$ extends to $\cY$. Then $\cY$ is an equivariant Kulikov model for $X$.
\end{example}

\begin{remark} If $X$ has an equivariant Kulikov model $\cY$ over $R(d)$, for some $d>0$, then we can also find an equivariant Kulikov model $\cY'$ over $R(d')$ for some $d'>0$ such that $\cY'_k$ is a reduced divisor with strict normal crossings. Indeed, if we denote by $\loga{Y}$ the log space over $S(d)^{\dagger}$ obtained by endowing $\cY$ with the divisorial log structure induced by $\cY_k$, then we can first make the log structure on $\loga{Y}$ Zariski by means of a $\mu_d$-equivariant log blow-up as in \cite[5.6]{niziol}, and then apply a Galois-equivariant version of the semi-stable reduction theorem in \cite[1.8]{saito}. Since log blow-ups are \'etale and log differentials are compatible with fs base change, the result is still a Kulikov model of $X$. We do not include a detailed proof here because we will not need this property in the remainder of the paper.
\end{remark}

\sss \label{sss:kuliK3} If $X$ is a $K3$-surface, then there always exists a positive integer $d$ such that $X$ has a Kulikov model over $R(d)$ whose special fiber is a reduced strict normal crossings divisor \cite[2.1]{liedtke-matsumoto}. The special fibers of such models have been classified by Kulikov and Persson-Pinkham (see Section \ref{ss:semistable}), which explains our choice of terminology. However, we will see in Example \ref{ex:nokulikov} that $X$ may fail to have an equivariant Kulikov model, even when $X$ has potential good reduction.


\begin{thm}\label{thm:abkulikov}
Let $A$ be an abelian variety over $K$. Then $X$ has an equivariant Kulikov model $\cY$ over $R(d)$, for some $d>0$, such that $\cY$ is a regular proper scheme over $R(d)$ and $\cY_k$ is a reduced strict normal crossings divisor.
\end{thm}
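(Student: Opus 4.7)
\emph{Plan of proof.} The approach is to combine Grothendieck's Semi-Stable Reduction Theorem with K\"unnemann's equivariant Mumford-theoretic construction of semi-stable models of abelian varieties.

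First, pick $d>0$ such that $A(d)=A\times_K K(d)$ acquires semi-abelian reduction, which is Grothendieck's theorem already invoked in Section~\ref{ss:abelian-aux}. By the non-archimedean uniformization of Bosch--L\"utkebohmert \cite{BL1,BL2} we then have an exact sequence of $K(d)$-analytic groups
$$0\to L\to E^{\an}\to A(d)^{\an}\to 0,$$
where $E$ is an extension of an abelian $K(d)$-variety $B$ with good reduction by a split torus $T$, and $L\subset E(K(d))$ is a discrete lattice of rank $\dim T$. The Galois group $\mu_d=\Gal(K(d)/K)$ acts compatibly on all terms of this sequence, in particular on $L$ and on the cocharacter lattice $N$ of $T$.

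The second step is to choose an $L$-periodic rational polyhedral decomposition $\Sigma$ of $N\otimes_\Z\R$ that is simultaneously $\mu_d$-invariant and regular (in the sense that it produces a regular total space). Starting from any $L$-periodic decomposition, one obtains a $\mu_d$-invariant refinement by intersecting with its finitely many $\mu_d$-translates (using that $\mu_d$ is finite and that $L$ is $\mu_d$-stable), and a further equivariant, e.g.~barycentric, subdivision yields regularity. Feeding $\Sigma$ into K\"unnemann's Mumford construction then gives a regular projective model $\cY$ of $A(d)$ over $R(d)$ whose special fiber is a reduced strict normal crossings divisor, and by functoriality the $\mu_d$-action on the uniformizing data extends to a $\mu_d$-action on $\cY$ extending the Galois action on the generic fiber.

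Finally, I would verify that $\omega_{\cY/R(d)}(\cY_{k,\red}-\cY_k)\cong \mathcal{O}_{\cY}$. Since $\cY_k$ is reduced this reduces to triviality of the ordinary relative dualizing sheaf $\omega_{\cY/R(d)}$. A translation-invariant volume form on $A(d)$ lifts to a translation-invariant form on $E$, which on the formal Mumford cover $\mathfrak{E}\to\cY$ extends chart-by-chart to a nowhere-vanishing section of $\omega_{\mathfrak{E}/R(d)}$ (an explicit calculation on the affine toric charts defining $\mathfrak{E}$). Since this trivialization is $L$-invariant by construction, it descends to $\cY$.

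The main obstacle is arranging all three constraints on $\Sigma$ ($L$-periodicity, $\mu_d$-invariance, regularity) simultaneously, together with the chart-based verification that the resulting Mumford model has trivial log canonical. Both points are treated in detail in K\"unnemann's paper on projective regular Mumford models of abelian varieties, and the equivariant bookkeeping adds nothing essential beyond the standard constructions; that reference would be the key technical input.
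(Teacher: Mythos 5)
Your proposal follows essentially the same route as the paper: reduce to semi-abelian reduction after base change and then invoke K\"unnemann's Mumford-theoretic construction of projective regular models with reduced snc special fiber. The differences are in which part of the argument each treats carefully. You focus on making the rational polyhedral decomposition $\Sigma$ simultaneously $L$-periodic, $\mu_d$-invariant and regular, and then verify triviality of $\omega_{\cY/R(d)}$ by an explicit chart computation on the formal Mumford cover. The paper instead emphasizes equivariance of the split ample degeneration data $(\cA(d)^o,\mathcal{L},\mathcal{M})$ (one must, in fact, arrange both the degeneration data and the cone decomposition equivariantly, so your observation is a useful complement rather than a conflict), and it obtains triviality of $\omega_{\cY/R(d)}$ more cheaply by noting that the $R(d)$-smooth locus of the K\"unnemann model is the N\'eron model of $A(d)$, which has trivial relative canonical bundle, and that the smooth locus is dense in $\cY_k$; the chart-based argument you outline is essentially what the paper carries out separately in Remark~\ref{rem:abkulikov} for the (harder) non-reduced case. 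One small caution in your sketch: an equivariant barycentric subdivision does not by itself give regularity of the associated toric model, nor does a common refinement automatically admit the $L$-periodic $\mu_d$-invariant convex support function needed for projectivity; both can be arranged, but they require the more careful equivariant subdivision arguments already present in K\"unnemann, which you correctly flag as the key technical input.
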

\begin{proof}
We will use the notations from Section \ref{sec:abelian}.
 If $d$ is a sufficiently divisible positive integer, then $A(d)$ has semi-abelian reduction.  Replacing $d$ by a multiple if necessary,
 we can use Theorem 4.6 in \cite{kunnemann} to produce a model $\cY$ of $A(d)$ over $R(d)$ such that $\cY$ is a regular proper scheme over $R(d)$ and $\cY_k$ is a reduced strict normal crossings divisor. This model has the property that the $R(d)$-smooth locus of $\cY$ is a N\'eron model for $A(d)$, by
  the discussion in Section 4.4 in \cite{kunnemann}. Since the smooth locus of $\cY_k$ is dense, triviality of $\omega_{\cA(d)/R(d)}$ now implies that $\omega_{\cY/R}$ is trivial.
  Finally, we can arrange that the Galois action of $\Gal(K(d)/K)$ on $A(d)$ extends to $\cY$, by starting from a split ample degeneration $(\cA(d)^o,\mathcal{L},\mathcal{M})$ (in the sense of \cite[\S2.1]{kunnemann})  such that $\Gal(K(d)/K)$ acts on $\mathcal{L}$ and $\mathcal{M}$.
\end{proof}

\begin{remark}\label{rem:abkulikov}
One can show that, if $A$ is an abelian $K$-variety with semi-abelian reduction, then $A$ has a Kulikov model over $R$ in the category of schemes. Let $\cP$ be one of the proper regular $R$-models of $A$ constructed in \cite{kunnemann}. Then $\cP_k$ is a strict normal crossings divisor, and one can check that the line bundle $\omega_{\cP/R}(\cP_{k,\red}-\cP_k)$ is trivial, in the following way. Let $\omega$ be a volume form on $A$ that extends to a relative volume form on the N\'eron model. We claim that $\omega$ generates $\omega_{\cP/R}(\cP_{k,\red}-\cP_k)$. This can be checked after pulling back $\omega$ through the \'etale morphism of formal schemes $\widetilde{\cP}_{\mathrm{for}}\to \cP_{\mathrm{for}}$ (notations as in \cite[\S2.13]{kunnemann}).
 Thus it suffices to show that, if $E$ is the semi-abelian $K$-variety that uniformizes $A$ and $\widetilde{\omega}$ is a translation-invariant volume form on $E$ that extends to a relative volume form on the N\'eron lft-model of $E$, then
 $\widetilde{\omega}$ generates
  $\omega_{\widetilde{\cP}/R}(\widetilde{\cP}_{k,\red}-\widetilde{\cP}_k)$ for every relative completion $\widetilde{P}$ constructed as in \cite{kunnemann} by means of Mumford's method. This follows easily from the fact that toric schemes over $R(d)$ have trivial relative logarithmic canonical sheaf.
 It is also worth pointing out that Theorem \ref{thm:abkulikov} and Remark \ref{rem:abkulikov} remain valid if the residue characteristic of $R$ is positive (replacing $K(d)$ by a finite Galois extension of $K$).
\end{remark}

Kulikov models of abelian varieties are related to N\'eron models {\em via} the following result.

\begin{prop}
Let $A$ be an abelian variety over $K$ and let $\cX$ be a Kulikov model for $A$ over $R$. Then the $R$-smooth locus $\Sm(\cX)$ of $\cX$ is canonically isomorphic to the N\'eron model $\cA$ of $A$ (in particular, it is a scheme).
\end{prop}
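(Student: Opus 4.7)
\emph{Plan.} I will show that the canonical morphism $\phi\colon\Sm(\cX)\to\cA$ furnished by the N\'eron mapping property (applied to the smooth separated $R$-scheme $\Sm(\cX)$ whose generic fiber is $A$) is an isomorphism, by proving in turn that $\phi$ is \'etale, an open immersion, and surjective.

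The key observation is that on $\Sm(\cX)$ every component of $\cX_k$ meeting it has multiplicity~$1$, and hence the Kulikov generator $\omega$ of $\omega_{\cX/R}(\cX_{k,\red}-\cX_k)$ restricts to a nowhere-vanishing section of $\omega_{\Sm(\cX)/R}$. Indeed, for any section $s\in\cX(R)=A(K)$ (using properness of $\cX$), picking local coordinates $z_1,\ldots,z_r$ around $s(0)$ cutting out the components through $s(0)$ and writing $t=\prod_j z_j^{N_{i_j}}\cdot u$, the identity $s^{*}t=t$ forces $\sum_j N_{i_j}\,\mathrm{ord}_t(s^{*}z_j)=1$, which gives $r=1$ and $N_{i_1}=1$; this simultaneously shows $\Sm(\cX)$ is a weak N\'eron model of $A$ and that $(\cX_{k,\red}-\cX_k)$ vanishes on $\Sm(\cX)$. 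On the N\'eron side, $\omega_{\cA/R}$ is trivialized by a translation-invariant generator $\omega_{\cA}$ because $\Lie(\cA)$ is free over $R$; after rescaling $\omega$ by a scalar in $K^{\times}$, we may assume $\omega|_A=\omega_{\cA}|_A$. Then $\phi^{*}\omega_{\cA}$ and $\omega|_{\Sm(\cX)}$ are sections of $\omega_{\Sm(\cX)/R}$ that agree on the dense open $A$, hence agree globally by torsion-freeness on the integral scheme $\Sm(\cX)$. Since $\omega|_{\Sm(\cX)}$ is a generator, so is $\phi^{*}\omega_{\cA}$; this forces the natural map $d\phi\colon\phi^{*}\omega_{\cA/R}\to\omega_{\Sm(\cX)/R}$ of line bundles to send a generator to a generator, i.e.~to be an isomorphism, which is the \'etaleness of $\phi$.

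Both $\Sm(\cX)$ and $\cA$ are connected as $R$-schemes, since each contains the dense connected open $A$ (any other connected component would have empty generic fiber, contradicting flatness). An \'etale morphism between $R$-schemes that restricts to the identity on a dense open has an open-closed diagonal $\Delta_\phi$ that is generically surjective onto the connected scheme $\Sm(\cX)\times_{\cA}\Sm(\cX)$, and hence surjective; so $\Delta_\phi$ is an isomorphism, $\phi$ is a monomorphism, and an \'etale monomorphism is an open immersion. For surjectivity, $\Sm(\cX)(R)=A(K)=\cA(R)$ by the weak N\'eron and N\'eron properties, and since $R$ is complete with algebraically closed residue field, Hensel's lemma applied to the smooth scheme $\cA$ shows every closed point $z\in\cA(k)$ is the reduction of some $s\in\cA(R)$. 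The bijection $\Sm(\cX)(R)\to\cA(R)$ then produces $\widetilde{s}\in\Sm(\cX)(R)$ with $\phi\circ\widetilde{s}=s$, so $z=\phi(\widetilde{s}(0))\in\phi(\Sm(\cX))$. Combined with the open-immersion property and the Jacobson property of the finite-type $R$-scheme $\cA$, this yields $\phi(\Sm(\cX))=\cA$, so $\phi$ is an isomorphism.

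The main technical hurdle is the \'etaleness step, which is where the Kulikov hypothesis enters crucially: triviality of the logarithmic relative canonical is exactly what guarantees that $\omega$ restricts to a nowhere-vanishing generator of $\omega_{\Sm(\cX)/R}$, matching a translation-invariant generator pulled back from $\cA$. For an arbitrary regular snc model the comparison between $\phi^{*}\omega_{\cA}$ and $\omega|_{\Sm(\cX)}$ only produces an inclusion of line bundles rather than an isomorphism, and $\Sm(\cX)$ is typically a strict \'etale cover of $\cA$ rather than isomorphic to it, as one already sees by blowing up a smooth point of the minimal regular snc model of an elliptic curve with multiplicative reduction.
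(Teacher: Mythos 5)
Your overall three-step outline (\'etale $\Rightarrow$ open immersion $\Rightarrow$ surjective) is essentially the reverse of the paper's (surjective $\Rightarrow$ \'etale $\Rightarrow$ open immersion), and this reversal is exactly where a gap opens up. The surjectivity step and the open-immersion-from-\'etale step are both fine; the problem is in the \'etaleness argument.

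The issue is the sentence ``after rescaling $\omega$ by a scalar in $K^{\times}$, we may assume $\omega|_A=\omega_{\cA}|_A$.'' This rescaling is by the scalar $c\in K^{\times}$ with $\omega|_A=c\,\omega_{\cA}|_A$, and it destroys the Kulikov generator property unless $c\in R^{\times}$. After rescaling, $\omega|_{\Sm(\cX)}$ is no longer known to be nowhere-vanishing, so the conclusion ``since $\omega|_{\Sm(\cX)}$ is a generator, so is $\phi^{*}\omega_{\cA}$'' is unjustified. What the comparison on the dense open $A$ genuinely gives you, without any rescaling, is: write $d\phi^{\wedge g}(\phi^{*}\omega_{\cA})=f\cdot\omega|_{\Sm(\cX)}$ on $\Sm(\cX)$; then $f$ is constant equal to $c^{-1}$ on $A$, hence $c^{-1}\in\Gamma(\Sm(\cX),\cO)\cap K=R$. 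This shows $\mathrm{ord}_t(c)\leq 0$, but your argument produces no inequality in the other direction, and proving $\mathrm{ord}_t(c)\geq 0$ (equivalently, that $d\phi^{\wedge g}$ is surjective somewhere) has exactly the same content as the \'etaleness you are trying to establish. So the \'etaleness step, as written, is circular.

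The paper breaks this circle by establishing surjectivity of $h\colon\Sm(\cX)\to\cA$ \emph{first} (using only properness, regularity, the N\'eron mapping property, and Hensel's lemma -- essentially your surjectivity paragraph, moved to the front). Once $h$ is surjective and $\Sm(\cX)$, $\cA$ have the same dimension, some irreducible component of $\Sm(\cX)_k$ must dominate a component of $\cA_k$, and at the generic point of such a component $h$ is a dominant morphism between smooth $k$-varieties in characteristic zero, hence generically \'etale. At that point $\alpha=d\phi^{\wedge g}$ is surjective, so the constant $f\in R$ is a unit, so $\alpha$ is an isomorphism everywhere. Your argument can be repaired by adopting this order: first prove surjectivity (your last paragraph verbatim), then use it to exhibit a non-contracted component, then run the line-bundle comparison using both trivializations (your key observation that the Kulikov generator restricts to a generator of $\omega_{\Sm(\cX)/R}$, plus translation-invariance on the N\'eron side) to reduce to checking surjectivity of $\alpha$ at that one point. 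As a minor remark, your open-immersion step also works but is more elaborate than necessary: the paper simply cites the fact that an \'etale morphism of separated $R$-schemes that is an isomorphism on generic fibers with the target a N\'eron model is an open immersion (BLR~3.5/7), whereas you reprove this via connectedness of the diagonal in the fiber product.
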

\begin{proof}
By the universal property of N\'eron models, we know that, for every smooth $R$-scheme $\cY$, every morphism of $K$-schemes $\cY_K\to A$ extends uniquely to a morphism
of $R$-schemes $\cY\to \cA$. The uniqueness of the extension guarantees that this still holds when $\cY$ is a smooth algebraic space over $R$ (by gluing the morphisms we get on \'etale charts). In particular, the isomorphism $\cX_K\to A$ extends uniquely to a morphism of algebraic spaces $h:\Sm(\cX)\to \cA$. We will show that $h$ is an isomorphism.

 The map $\cX(R)\to \cX_K(K)$ is bijective because $\cX$ is proper over $R$. Since $\cX$ is regular, every $R$-point on $\cX$ factors through $\Sm(\cX)$, by \cite[3.1.2]{BLR}.
  On the other hand, we also have that $\cA(R)\to A(K)$ is bijective by the N\'eron mapping property, and the reduction map $\cA(R)\to \cA(k)$ is surjective by smoothness of $\cA$ and the fact that $R$ is henselian. Thus $h$ is surjective, and it is enough to show that it is an open immersion.

   Since $h$ is an isomorphism on the generic fibers, we only need to prove that it is \'etale: then $\Sm(\cX)$ is a scheme \cite[II.6.17]{knutson} so that fact (c) after the statement of \cite[3.5/7]{BLR} implies that $h$ is an open immersion. It suffices to show that the morphism of line bundles
   $$\alpha:h^*\omega_{\cA/R}\to \omega_{\Sm(\cX)/R}$$ is an isomorphism. The source and target of this morphism are trivial line bundles on $\Sm(\cX)$ and the generic fiber  $\cX_K$ is proper, so that it is enough to show that $\alpha$ is surjective on the stalks at some point $x$ of $\Sm(\cX)_k$. Since $k$ has characteristic zero, we can take for $x$ the generic point of any connected component of $\Sm(\cX)_k$ that is not contracted by $h$; such components exist because $h$ is surjective and $\Sm(\cX)$ and $\cA$ have the same dimension. This concludes the proof.
\end{proof}

\subsection{Toroidal models in the Nisnevich topology}
\sss Let $U$ be an algebraic space. A Nisnevich cover of $U$ is a family of \'etale morphisms  $\{f_{\alpha}:V_{\alpha}\to U\}_{\alpha\in A}$
 such that, for every point $u$ in $U$, there exist an element $\alpha$ in $A$ and a point $v$ in $V_{\alpha}$ such that $f_{\alpha}(v)=u$ and the extension of residue fields
 $\kappa(u)\to \kappa(v)$ is trivial. We will express this condition by saying that the point $u$ lifts to a point $v$ on $V_{\alpha}$. Nisnevich covers generate a Grothendieck topology on the category of algebraic spaces, called the Nisnevich topology.

 \sss Even though algebraic spaces are defined as sheaves on the \'etale site, they have very good properties already with respect to the Nisnevich topology: on every Noetherian algebraic space with a finite group action, we can find ``good'' equivariant Nisnevich charts that are affine schemes. To make this precise, we first need to introduce some terminology.
  Let $G$ be a finite group and let $f:V\to U$ be an equivariant morphism of Noetherian algebraic spaces with $G$-action. We say that $f$ is {\em fixed point reflecting} (fpr) if for every point $v$ of $V$, the stabilizer of $G$ at $v$ is equal to the stabilizer of $G$ at $f(v)$.

 \begin{prop}\label{prop:nisnevich}
 Let $U$ be a Noetherian algebraic space with an action of a finite group $G$.
  Then there exists an  fpr $G$-equivariant Nisnevich cover $V\to U$ such that $V$ is an affine scheme.
 \end{prop}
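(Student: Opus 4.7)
The plan is to build $V$ as a finite disjoint union of local equivariant charts $V_{u_i}$, each constructed by inducing up from a suitably refined $H$-equivariant étale affine neighborhood of a point with stabilizer $H$. For $u \in U$ with stabilizer $H := G_u$, I will produce an $H$-equivariant étale affine neighborhood $W'' \to U$ of $u$ carrying an $H$-fixed $\kappa(u)$-rational lift $\tilde{w}$ of $u$, and such that the fpr condition $H_w = G_{\pi(w)}$ holds for every $w \in W''$. The induced scheme $V_u := G \times^H W''$, being a disjoint union of $[G:H]$ copies of $W''$, is then the desired $G$-equivariant affine étale fpr neighborhood of the orbit $G \cdot u$, and the Noetherian hypothesis allows one to extract a finite cover of $U$.

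The construction of $W''$ proceeds in two stages. First, choose an affine Nisnevich chart $\pi_0: W_0 \to U$ admitting a $\kappa(u)$-rational lift $w_0$ of $u$; such a chart exists since every Noetherian algebraic space has a Nisnevich cover by an affine scheme (cf.~\cite{knutson}). To $H$-equivariantize, form
\[
W := \prod_{h \in H} W_0^h \quad \text{over } U,
\]
where $W_0^h := W_0 \times_{U,\,h} U$ is the pullback of $\pi_0$ along the translation $h: U \to U$. Then $W$ is affine, the natural projection $\pi: W \to U$ is $H$-equivariant étale with $H$ acting on $W$ by permuting the factors, and the diagonal tuple $\tilde{w} := (w_0, \ldots, w_0)$ is an $H$-fixed $\kappa(u)$-rational lift of $u$.

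The main obstacle is the second stage: shrinking $W$ to an $H$-stable affine open $W'' \ni \tilde{w}$ on which the fpr condition holds. First, restrict to the $H$-stable open locus $\pi^{-1}(U_{\leq H})$, where
\[
U_{\leq H} := U \setminus \bigcup_{g \in G \setminus H} U^g = \{u' \in U : G_{u'} \subseteq H\}
\]
is open because each fixed locus $U^g$ is closed; on this locus $G_{\pi(w)} \subseteq H$ is automatic. Next, for each $h \in H$, consider the two closed subschemes $W^h$ and $\pi^{-1}(U^h)$ of $W$, with the natural closed immersion $W^h \hookrightarrow \pi^{-1}(U^h)$. Since $\pi$ is étale and $\kappa(\tilde{w}) = \kappa(u)$, the morphism $\pi$ induces an isomorphism of henselian local rings at $\tilde{w}$ and $u$ that intertwines the two $h$-actions; hence $W^h$ coincides with $\pi^{-1}(U^h)$ after henselization at $\tilde{w}$, and by coherence the ideal sheaf defining $W^h$ inside $\pi^{-1}(U^h)$ vanishes on some Zariski-open neighborhood $N_h$ of $\tilde{w}$. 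Taking $\bigcap_{h \in H} N_h$, $H$-symmetrizing, and shrinking to a basic $H$-stable affine open (e.g.\ $D\bigl(\prod_{h \in H} h \cdot f\bigr)$ for an $f$ not vanishing at $\tilde{w}$) yields $W''$. Then for any $w \in W''$ and any $g \in G_{\pi(w)} \subseteq H$, the relation $\pi(w) \in U^g$ forces $w \in W^g$, so $gw = w$; combined with the automatic inclusion $H_w \subseteq G_{\pi(w)}$, this gives $H_w = G_{\pi(w)}$ throughout $W''$.

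Finally, by Noetherianity of $U$, finitely many such $V_{u_1}, \ldots, V_{u_n}$ suffice so that $V := \coprod_i V_{u_i}$ surjects onto $U$; then $V$ is an affine scheme (finite disjoint union of affines) with the obvious $G$-action, and $V \to U$ is $G$-equivariant étale fpr by construction. The Nisnevich-lifting property at every $u' \in U$ is ensured by starting each local construction from a globally defined Nisnevich chart $W_{0,i}$ of $U$ and assembling, for $u'$ in the image of the corresponding $V_{u_i}$, a rational lift in $V_{u_i}$ out of rational lifts of appropriate $G$-translates of $u'$ supplied by $W_{0,i}$.
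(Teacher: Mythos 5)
Your overall strategy is a sound, self-contained alternative to the paper's approach. The paper simply cites Matsuura's Proposition~2.1 for the existence of an fpr $G$-equivariant \'etale affine surjection, inspects that proof for the Nisnevich lifting at a given point, and then runs a Noetherian induction on closed subsets. You instead build the local equivariant charts from scratch: the equivariantization via $\prod_{h\in H}W_0^h$ and the shrinking to the fpr locus via the henselization comparison of $W^h$ with $\pi^{-1}(U^h)$ are both correct and nicely done, and the induction $G\times^H W''$ from the stabilizer to the full group is handled properly (one checks $G_{[g,w]}=gH_wg^{-1}=gG_{\pi(w)}g^{-1}=G_{g\pi(w)}$). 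Note the small point that $\pi^{-1}(U_{\leq H})$ need not be affine; your later insertion of a basic $H$-stable affine open $D(\prod_{h\in H}h\cdot f)$ is what makes $W''$ affine, and that step is fine.

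The genuine gap is in the final paragraph. Choosing finitely many $V_{u_i}$ so that $\coprod_i V_{u_i}\to U$ is \emph{surjective} does not yield a Nisnevich cover: a point can be in the topological image of an \'etale morphism without admitting a lift with trivial residue extension. Your patching sentence does not repair this, because the rational lift you produce lives in $W_i=\prod_h W_{0,i}^h$, not in the open subscheme $W''_i$: the shrinking from $W_i$ to $W''_i$ destroys the global Nisnevich property of the chart, so ``$u'$ in the image of $V_{u_i}$'' gives no control over whether the $W_{0,i}$-assembled lift lands in $W''_i$. There are two standard ways to close the gap, and either would finish your proof. (i)~Observe that for an \'etale map $f\colon V\to U$ the set $V^\circ=\{v:\kappa(v)=\kappa(f(v))\}$ is exactly the open locus where $f$ is a local isomorphism, so its image $f(V^\circ)$ is an \emph{open} subset of $U$ and equals the set of points admitting a rational lift; since each $V_{u_i}$'s rational-lift locus is open and contains the orbit $G\cdot u_i$, finitely many cover $U$ by quasi-compactness. (ii)~Alternatively, follow the paper's route: a rational lift of $u$ in $V_u$ spreads out to a dense open of $\overline{\{u\}}$ (so the rational-lift locus contains a nonempty $G$-stable open of $\overline{G\cdot u}$), and one concludes by Noetherian induction on $G$-stable closed subsets of $U$.
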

 \begin{proof}
 By \cite[2.1]{matsuura}, we can find an fpr $G$-equivariant \'etale surjection $V\to U$ such that $V$ is an affine scheme.
 Let $u$ be a point of $U$.
 An inspection of the proof of
 \cite[2.1]{matsuura} shows that we can choose $V\to U$ in such a way that $u$ lifts to a point on $V$. Then there exists a neighbourhood of $u$ in the Zariski closure of $\{u\}$ such that every point in this neighbourhood lifts to $V$. By Noetherian induction, we can now construct a morphism $V\to U$ as in the statement of the proposition.
 \end{proof}

 \if false
 \begin{prop}\label{prop:nis}
 Let $W\to V$ and $V\to U$ be morphisms of Noetherian algebraic spaces.
 \begin{enumerate}
\item \label{it:compos} If $W\to V$ and $V\to U$ are Nisnevich covers, then so its $W\to U$.
\item \label{it:local} If $W\to V$ and $W\to U$ are Nisnevich covers, then so is $V\to U$.
\item \label{it:basechange} If $U'\to U$ is a flat morphism of Noetherian algebraic spaces such that every point on $U$ lifts to a point on $U'$, then $V\to U$ is a Nisnevich cover if and only if $V\times_U U'\to U'$ is a Nisnevich cover.
\end{enumerate}
 \end{prop}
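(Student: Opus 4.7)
For Part (1), the plan is direct: given $u\in U$, first lift it to $v\in V$ with $\kappa(u)\cong\kappa(v)$ using that $V\to U$ is a Nisnevich cover, then lift $v$ to $w\in W$ with $\kappa(v)\cong\kappa(w)$ using that $W\to V$ is. The composition $W\to U$ is étale (compositions of étale morphisms are étale), and the chain of residue field isomorphisms gives $\kappa(u)\cong\kappa(w)$.

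For Part (2), I would argue étaleness and the point-lifting condition separately. The morphism $W\to V$ is étale and surjective (every point of $V$ is hit because the composition $W\to V\to U$ is surjective and, in fact, $W\to V$ is a Nisnevich cover), hence faithfully flat. Since we are working with Noetherian algebraic spaces, fpqc descent applies to the properties constituting étaleness (flat, locally of finite presentation, unramified), and these descend from $W\to U$ through $W\to V$ to $V\to U$. For the lifting, given $u\in U$ pick $w\in W$ above $u$ with $\kappa(u)\cong\kappa(w)$ from the Nisnevich hypothesis on $W\to U$, and let $v$ be its image in $V$. The tower $\kappa(u)\hookrightarrow\kappa(v)\hookrightarrow\kappa(w)$ composes to an isomorphism, forcing $\kappa(u)\cong\kappa(v)$.

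For Part (3), étaleness is straightforward in both directions: the forward direction uses that étaleness is stable under base change, and the backward direction invokes fpqc descent once more, using that the flat hypothesis together with surjectivity on points (implied by the fact that every $u\in U$ lifts through $U'\to U$, necessarily with residue field extension trivial) makes $U'\to U$ faithfully flat. For the point-lifting in the forward direction, given $u'\in U'$ with image $u\in U$, choose $v\in V$ over $u$ with $\kappa(u)\cong\kappa(v)$; then $\kappa(v)\otimes_{\kappa(u)}\kappa(u')=\kappa(u')$ is a field, producing a unique point $v'\in V\times_U U'$ above $u'$ with $\kappa(v')\cong\kappa(u')$. For the backward direction, given $u\in U$, first lift it to $u'\in U'$ with $\kappa(u)\cong\kappa(u')$ by the hypothesis on $U'\to U$, then lift $u'$ to a point $v'\in V\times_U U'$ with $\kappa(u')\cong\kappa(v')$. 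The image $v\in V$ satisfies $v\mapsto u$, and the composition $\kappa(u)\hookrightarrow\kappa(v)\hookrightarrow\kappa(v')\cong\kappa(u')\cong\kappa(u)$ being the identity forces $\kappa(u)\cong\kappa(v)$.

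The arguments are essentially formal manipulations with the definition of Nisnevich covers, so I do not expect a substantive obstacle. The only subtlety requiring care is the interpretation of the hypothesis in (3): it must be read as lifting with trivial residue field extension (matching the convention used in the definition of Nisnevich cover), as otherwise the backward implication fails—one could have large residue field extensions of $u\in U$ at each $u'$ above it, preventing the construction of a rational point in the fiber $V_u$. Apart from this, the only nontrivial input is fpqc descent of étaleness, which is standard for Noetherian algebraic spaces and was already implicitly used elsewhere in the paper.
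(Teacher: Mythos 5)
Your proposal is correct and takes essentially the same route as the paper, which states parts (1) and (2) are ``clear,'' notes that stability of Nisnevich covers under base change is obvious, and for the reverse implication in (3) composes the lift to $U'$ with the Nisnevich lift to $V\times_U U'$, then projects to $V$, invoking fpqc descent for étaleness of $V\to U$. You simply spell out the residue-field bookkeeping (that a tower of field extensions $\kappa(u)\hookrightarrow\kappa(v)\hookrightarrow\kappa(w)$ with isomorphic ends forces the middle to collapse) that the paper leaves implicit, and you correctly flag that ``lifts'' in the hypothesis of (3) must carry the same trivial-residue-extension meaning as in the definition of Nisnevich cover.
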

 \begin{proof}
Points \eqref{it:compos} and \eqref{it:local} are clear. It is also obvious that the property of being a Nisnevich cover is stable under base change. Thus we only need to prove
 that, in point \eqref{it:basechange}, the morphism $V\to U$ is a Nisnevich cover if $V\times_U U'\to U'$ is a Nisnevich cover. Every point of $U$ lifts to $V\times_U U'$, and this to $V'$. The morphism $V\to U$ is \'etale because this property can be checked after faithfully flat base change. Thus $V\to U$ is a Nisnevich cover.
 \end{proof}
 \fi

  The importance of fpr morphisms lies in the following property.
 \begin{prop}\label{prop:inert}
 Let $G$ be a finite group, acting on an algebraic space $U$ of finite type over a Noetherian scheme $Z$ with trivial $G$-action. Assume that the order of $G$ is invertible on $U$, and
 let $V\to U$ be an fpr $G$-equivariant Nisnevich cover. Then the quotient morphism $V/G\to U/G$ is a Nisnevich cover.
 \end{prop}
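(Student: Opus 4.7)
The plan is to verify the two defining properties of a Nisnevich cover for $\pi\colon V/G \to U/G$: étaleness and the point-lifting property with trivial residue-field extension. Before attacking these, note that since $G$ is finite and $|G|$ is invertible on $U$ (hence on $V$), the sheaf quotients $U/G$ and $V/G$ exist as Noetherian algebraic spaces, with finite, faithfully flat quotient morphisms $p\colon U \to U/G$ and $q\colon V \to V/G$, and $\pi$ is the induced morphism.

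For the point-lifting property, I would proceed as follows. Let $\bar u \in U/G$, choose a preimage $u \in U$, and set $H := G_u$. Using that $V \to U$ is a Nisnevich cover, lift $u$ to $v \in V$ with $\kappa(v) = \kappa(u)$. The fpr hypothesis gives $G_v = G_u = H$, and the $H$-action on $\kappa(v)$ corresponds to that on $\kappa(u)$ under the isomorphism $\kappa(v) \cong \kappa(u)$. The residue fields on the quotients are the corresponding $H$-invariants, $\kappa(\bar u) = \kappa(u)^H$ and $\kappa(\bar v) = \kappa(v)^H$, so $\kappa(\bar v) = \kappa(\bar u)$, as required.

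For étaleness, the strategy is to show that the natural square
\begin{equation*}
\begin{CD}
V @>f>> U \\
@VqVV @VVpV \\
V/G @>\pi>> U/G
\end{CD}
\end{equation*}
is Cartesian. Granting this, étaleness of $\pi$ follows by descent: its base change along the faithfully flat $p$ is the étale morphism $f$. To establish that the square is Cartesian, I would reduce to the case of affine schemes étale-locally on $U/G$. Using Proposition \ref{prop:nisnevich} to find an fpr equivariant affine Nisnevich chart of $U$, together with a standard equivariant slice construction for finite group actions on algebraic spaces, every point $\bar u$ of $U/G$ admits a Nisnevich neighborhood over which $U$ takes the form $G \times^H U_0$ with $H = G_u$ acting on an affine scheme $U_0$. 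By fpr, $V$ pulls back to $G \times^H V_0$ with $V_0 = V \times_U U_0$, and $H$ acts trivially on each fiber of $V_0 \to U_0$ (since the stabilizer of $v \in f^{-1}(u)$ under the $H$-action on $f^{-1}(u)$ is $G_v \cap H = H$). The set-theoretic bijectivity of $V \to U \times_{U/G} V/G$ is a direct consequence of fpr, and étaleness of both sides over $U$ (on the source by hypothesis; on the target by the invariant-theoretic fact that if $|H|$ is invertible and a $H$-equivariant étale extension $B_0/A_0$ is fpr, then $B_0^H/A_0^H$ is étale, using exactness of $H$-invariants) upgrades this to an isomorphism of algebraic spaces.

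The main obstacle is precisely this étaleness of $\pi$, for which the fpr hypothesis is essential: it forces the $G$-orbit structure on $V$ to mirror that on $U$, so that the quotient does not introduce additional ramification. The technical heart is the reduction to the affine-scheme case via the equivariant Nisnevich slice chart, combined with the verification of étaleness through the exactness of $H$-invariants, which is legitimate because $|G|$ is invertible on the spaces in question.
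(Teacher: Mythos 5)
Your overall strategy (étaleness plus point-lifting) is reasonable, but there are two genuine gaps, one of which is precisely the place where the hypothesis on $|G|$ must enter.

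First, in the point-lifting part, you assert as obvious that $\kappa(\bar u) = \kappa(u)^H$ and $\kappa(\bar v) = \kappa(v)^H$, where $H=G_u=G_v$. This equality is \emph{not} automatic: in general, for a prime $\mathfrak{q}$ of $A^G$ below a prime $\mathfrak{p}$ of $A$, the inclusion $\kappa(\mathfrak{q}) \hookrightarrow \kappa(\mathfrak{p})^{G_{\mathfrak{p}}}$ can be strict. The paper proves exactly this equality by reducing (using that the quotient maps are affine and that geometric quotients commute with flat base change) to the affine case with $G=H$, and then invoking the fact that, when $|G|$ is invertible in $A$, any $G$-equivariant surjection $A\twoheadrightarrow\kappa$ induces a surjection $A^G\twoheadrightarrow\kappa^G$. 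That is where the invertibility of $|G|$ is used, and not where you placed it. By omitting any justification for the residue-field identity you have skipped the technical heart of the point-lifting argument.

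Second, your étaleness argument begins by asserting that $p\colon U\to U/G$ is faithfully flat, and then deduces étaleness of $\pi$ by descent along $p$. But quotient maps by finite group actions are \emph{not} flat in general (e.g.\ $\mathbb{Z}/2$ acting on $\mathbb{A}^2$ by $-1$): flatness fails at points with nontrivial stabilizer, which is exactly the interesting case here. So the descent step is not available as stated. The subsequent reduction to a slice presentation $G\times^H U_0$ and the invocation of an ``invariant-theoretic fact'' about $B_0^H/A_0^H$ being étale are not spelled out, and that fact is essentially equivalent to what you are trying to prove, so as written the argument is close to circular. The paper sidesteps this entirely by citing the étaleness of $V/G\to U/G$ from Knutson and Koll\'ar (reducing its own work to the residue-field claim). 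If you want to give a self-contained proof of étaleness via the Cartesian-square approach, it can be done, but you would need to argue étale-locally on $U/G$ with honest affine charts and without appealing to flatness of $p$, and the invariant-theoretic input should be stated and proved, not merely named.
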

 \begin{proof}
  The quotients $U/G$ and $V/G$ are representable by algebraic spaces by \cite[2.2]{matsuura}. The construction in \cite[2.2]{matsuura} also implies that
  they are of finite type over $Z$, and hence Noetherian, because this holds in the case where $U$ and $V$ are schemes \cite[V.1.5]{sga1}.
 The morphism $V/G\to U/G$ is \'etale; this was stated in \cite[p.183]{knutson} and a proof can be found in \cite[2.17]{kollar}.
  Thus we only need to show that every point $u$ in $U/G$ lifts to a point in $V/G$.     Let $u'$ be a point in $U$ that maps to $u$.
  The point $u'$ lifts to a point $v'$ in $V$ because $V\to U$ is a Nisnevich cover. We denote by $v$ the image of $v'$ in $V/G$. Since $V\to U$ is also fpr, the stabilizer subgroups of $G$ at $u'$ and $v'$ coincide; we denote this stabilizer by $H$.

 We claim that $\kappa(u)=\kappa(u')^H=\kappa(v')^H=\kappa(v)$. This implies that $u$ lifts to the point $v$ on $V/G$. To prove our claim,
  we may assume that $U$ and $V$ are affine schemes,
  since the quotient maps $U\to U/G$ and $V\to V/G$ are affine and the formation of geometric quotients commutes with flat base change.
    Then by \cite[V.2.2(i)]{sga1}, we may also assume that $G=H$. Now our claim follows from the fact that, for every equivariant surjective morphism
    $A\to \kappa$ of rings with $G$-action such that the order of $G$ is invertible in $A$, the induced morphism $A^G\to \kappa^G$ is still surjective (this is the only place where we used the assumption that the order of $G$  is invertible on $U$).
  \end{proof}

\begin{definition}\label{def:toroidal}
Let $X$ be a smooth and proper $K$-scheme, and let $\cX$ be a normal proper $R$-model of $X$ in the category of algebraic spaces. We say that $\cX$ is {\em Nis-toroidal} if there exist
\begin{enumerate}
\item a Nisnevich cover of $\cX$ by finitely many schemes $\cU_1,\ldots,\cU_r$;
\item on each scheme $\cU_j$,
a reduced divisor $D_j$ such that $D_j$ is flat over $R$ and such that $\cU_j$, endowed with the divisorial {\em Zariski} log structure associated with $(\cU_j)_k+D_j$, is smooth over
$S^{\dagger}$.
\end{enumerate}
\end{definition}

\subsection{Proof of the Monodromy Property}

\begin{theorem}\label{thm:toroidal}
Let $X$ be a geometrically connected, smooth and proper $K$-scheme, and assume that $X$ has an equivariant Kulikov model over $R(d)$ for some $d>0$.
 Then $X$ has a Nis-toroidal proper $R$-model $\cX$ such that the divisor class $K_{\cX/R}+\cX_{k,\red}$ is torsion, where $K_{\cX/R}$ denotes the relative canonical divisor of $\cX$ over $R$.
\end{theorem}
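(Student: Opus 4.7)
The plan is to construct $\cX$ as the quotient of (a modification of) an equivariant Kulikov model by the Galois action. Fix a positive integer $d$ and an equivariant Kulikov model $\cY$ of $X$ over $R(d)$, and endow it with the divisorial log structure induced by $\cY_k$; denote the resulting log algebraic space by $\loga{Y}$. Then $\loga{Y}$ is log smooth over $S(d)^{\dagger}$, the $\mu_d$-action on $\cY$ lifts to an action on $\loga{Y}$ over $S(d)^{\dagger}$, and the defining property of an equivariant Kulikov model amounts to $\omega_{\loga{Y}/S(d)^{\dagger}}$ being trivial.

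First I would apply the algebraic space version of Gabber's equivariant tamification procedure (reformulated in Section \ref{sec:appendix}) to produce a $\mu_d$-equivariant log modification $\loga{Z}\to \loga{Y}$ such that the $\mu_d$-action on $\loga{Z}$ is \emph{very tame}, meaning that at every geometric point of $\loga{Z}$ the stabilizer in $\mu_d$ acts trivially on the stalk of the characteristic monoid. Such a modification can be produced canonically (and hence equivariantly) by an iterated barycentric-type subdivision of the fan-like structure attached to $\loga{Y}$. Since log modifications of log smooth log algebraic spaces are log étale, $\loga{Z}$ remains log smooth over $S(d)^{\dagger}$, and the line bundle $\omega_{\loga{Z}/S(d)^{\dagger}}$ is still trivial.

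Next I would form the quotient $\loga{X}:=\loga{Z}/\mu_d$ in the category of fine saturated log algebraic spaces. Very tameness of the action guarantees that the quotient morphism $q:\loga{Z}\to \loga{X}$ is Kummer log étale, so $\loga{X}$ is log smooth over $S^{\dagger}$, and its generic fibre is canonically isomorphic to $X$ because the $\mu_d$-action on $\cY_{K(d)}$ is the Galois action. To exhibit the Nis-toroidal structure I would combine Proposition \ref{prop:nisnevich} and Proposition \ref{prop:inert}: choose a $\mu_d$-equivariant fixed point reflecting Nisnevich cover of $\cZ$ by affine schemes, and pass to quotients by $\mu_d$ to obtain a Nisnevich cover of $\cX$ by affine schemes $\cU_j$. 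After a further étale refinement, the local structure theorem for log smooth morphisms with Zariski log structure provides divisors $D_j$ on $\cU_j$, flat over $R$, such that the log structure coincides with the divisorial Zariski log structure attached to $(\cU_j)_k+D_j$, as required by Definition \ref{def:toroidal}.

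Finally, the divisor $K_{\cX/R}+\cX_{k,\red}$ represents the line bundle $\omega_{\loga{X}/S^{\dagger}}$, whose pullback under the log étale map $q$ equals $\omega_{\loga{Z}/S(d)^{\dagger}}$, which is trivial. Since $q$ is finite flat of degree $d$ on underlying algebraic spaces, the norm map $\Pic(\cZ)\to \Pic(\cX)$ yields $\omega_{\loga{X}/S^{\dagger}}^{\otimes d}\cong \mathcal{O}_{\cX}$, so $K_{\cX/R}+\cX_{k,\red}$ is $d$-torsion. The main obstacle will be the first step: carrying out Gabber's tamification construction $\mu_d$-equivariantly in the category of log algebraic spaces while keeping the logarithmic relative canonical bundle trivial, and interpreting the quotient as an algebraic space rather than merely a log scheme. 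This is precisely what the technical results of the appendix are designed to enable.
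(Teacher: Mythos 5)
Your proposal has the right skeleton (endow the Kulikov model with the divisorial log structure, make the $\mu_d$-action very tame, pass to the quotient log algebraic space, read off the Nis-toroidal property and torsion of $K_{\cX/R}+\cX_{k,\red}$), but it omits the decisive \emph{rigidification} step from \cite[Exp.VIII]{gabber}, and this is a genuine gap.

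You state that very tameness means that stabilizers act trivially on the stalks of the characteristic monoid, and you argue that this can be achieved $\mu_d$-equivariantly by a canonical barycentric-type subdivision. This is not the full definition, and the plan breaks down precisely for that reason. Following \cite[VI.3.1]{gabber}, very tameness also requires each stabilizer $G_{\overline{z}}$ to act trivially on the \emph{entire log stratum} containing the image of $\overline{z}$. A log modification (equivariant log blow-up, barycentric subdivision) does make the log structure Nisnevich and $\mu_d$-strict, which handles the characteristic monoids; but it does not change the action on the strata themselves, and the Galois action $\mu_d\curvearrowright \cY_{K(d)}$ typically has fixed points (e.g.\ any $K$-rational point of $X$ gives one), so stabilizers can act nontrivially on the open stratum. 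To kill this obstruction one must add \emph{horizontal} components to the logarithmic boundary (rigidification). The paper's proof uses \cite[VIII.5.3.8]{gabber} to produce, Zariski-locally, an fpr $\mu_d$-equivariant surjection $\loga{W}\to\loga{V}$ of affine log schemes that is locally an open immersion on underlying schemes, whose log structure is divisorial from an snc divisor containing $\cW_{k,\red}$, and on which the $\mu_d$-action is very tame; it is this $\loga{W}$ (not a modification of $\loga{V}$) that one feeds through Proposition \ref{prop:inert} to get the desired Nis-toroidal atlas $\cW/\mu_d\to\cX$. Your ``further \'etale refinement'' invoking a structure theorem for log smooth morphisms cannot substitute for this: without rigidification the quotient log structure need not be divisorial locally on a Nisnevich cover.

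Your final torsion argument also needs a fix. You invoke the norm map to deduce $\omega_{\loga{X}/S^{\dagger}}^{\otimes d}\cong\mathcal{O}_{\cX}$, but the quotient map $\cZ\to\cZ/\mu_d$ is generally not finite locally free (it is finite but usually not flat, e.g.\ at points with nontrivial stabilizer where $\cX$ is not regular), so the norm functor on line bundles is not directly available. The paper instead treats $\omega_{\loga{X}/S^{\dagger}}$ as a rank-one reflexive sheaf, picks a volume form $\omega$ on $X$ as a rational section, and verifies at codimension-one points of $\cX$ that $\mathrm{div}_{\cX}(\omega)$ is a rational multiple of $\cX_k$, by comparing with $\loga{Z}\to \loga{X}\times^{\mathrm{fs}}_{S^{\dagger}}S(d)^{\dagger}$ (which is an isomorphism over the generic points of $\cX_k$) and using triviality of $\omega_{\loga{Z}/S(d)^{\dagger}}$. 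This avoids the flatness issue entirely. Both of these points -- rigidification and the divisor computation -- need to be repaired for the argument to close.
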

\begin{proof}
 We will use two key concepts from Gabber's theory exposed in \cite{gabber}: {\em very tame} group actions on log schemes \cite[Exp.VI]{gabber} and the notion of {\em rigidification} \cite[Exp.VIII]{gabber}. Some care is required here, because the results in \cite{gabber} are formulated for log schemes and we want to work with
  log algebraic spaces. We will explain how the necessary results can be adapted to our set-up.

  Let $\loga{Z}$ be a fine and saturated log algebraic space of characteristic zero and let $G$ be a finite group acting on $\loga{Z}$. Following the definition for log schemes in \cite[VI.3.1]{gabber}, we say that the action of $G$ is very tame if, for every geometric point $\overline{z}$ of $\loga{Z}$, the stabilizer $G_{\overline{z}}$ of $\overline{z}$ acts trivially on the characteristic monoid $\mathcal{M}^{\sharp}_{\loga{Z},\overline{z}}$ and on the entire log stratum of $\loga{Z}$ that contains the image of $\overline{z}$. Assume that $\loga{Z}$ is regular and that the action of $G$ on $\loga{Z}$ is very tame.
  Deligne and, independently, Matsuura have proven that the quotient $\cZ/G$ exists in the category of algebraic spaces (see \cite{matsuura} and \cite[p.183]{knutson}). We denote by $\pi:\cZ\to \cZ/G$ the quotient map. Then we can form the quotient $\loga{Z}/G$ as in \cite[VI.3.2]{gabber}
   by endowing $\cZ/G$ with the log structure $$(\pi_*\mathcal{M}_{\loga{Z}})^G\to  (\pi_*\mathcal{O}_{\cZ})^G=\mathcal{O}_{\cZ/G}.$$
 This log structure is fine and saturated and the log algebraic space $\loga{Z}/G$  is still regular: the result in \cite[VI.3.2]{gabber} for schemes can be generalized to algebraic spaces by taking $G$-equivariant \'etale charts on $\cZ$ as in \cite[2.2]{matsuura} and \cite[p.183]{knutson}.

 Let $\cY$ be an equivariant Kulikov model for $X$ over $R(d)$, for some $d>0$, and denote by $\loga{Y}$ the log algebraic space we obtain by endowing $\cY$ with the divisorial log structure induced by $\cY_k$. We write $S(d)^{\dagger}$ for the scheme $S(d)=\Spec R(d)$ with its standard log structure. Then $\loga{Y}$ is smooth over $S(d)^{\dagger}$, the logarithmic relative canonical line bundle $\omega_{\loga{Y}/S(d)^{\dagger}}\cong \omega_{\cY/S(d)}(\cY_{k,\red}-\cY_k)$ is trivial, and $\mu_d$ acts on $\loga{Y}$.
  We say that the log structure on $\loga{Y}$ is {\em Nisnevich} if the sheaf of monoids $\mathcal{M}_{\loga{Y}}$
  that defines the log structure is the pullback of a sheaf on the Nisnevich site on $\cY$. We can make the log structure on $\loga{Y}$ Nisnevich by means of a canonical (and, in particular, $\mu_d$-equivariant) log blow-up: we first choose an fpr $\mu_d$-equivariant Nisnevich cover $\cU\to \cY$  such that $\cU$ is an affine scheme, using Proposition \ref{prop:nisnevich}. We pull back the log structure on $\loga{Y}$ to define a log scheme $\loga{U}$. We then use the canonical log blow-up in \cite[5.6]{niziol} to make the log structure on $\loga{U}$ Zariski. Log blow-ups are \'etale, and therefore do not affect the smoothness of $\loga{U}$ or the triviality of the logarithmic relative canonical line bundle. By a similar procedure, we can also make the log structure on $\loga{U}$ $\mu_d$-strict, which means that, for every irreducible component $E$ of $\cU_k$ and every $\zeta\in \mu_k$, either $\zeta E=E$ or $\zeta E\cap E=\emptyset$. This can always be achieved by performing a log modification corresponding to a barycentric subdivision of the fan of $\loga{U}$. Since each step in this procedure is canonical, the resulting morphism of log schemes $\loga{V}\to \loga{U}$ descends to a $\mu_d$-equivariant proper \'etale morphism of log algebraic spaces $\loga{Z}\to \loga{Y}$ that is an isomorphism on the generic fibers.
    The log algebraic space $\loga{Z}$ is smooth over $S(d)^{\dagger}$, its log structure is Nisnevich and $\mu_d$-strict, and it has trivial logarithmic relative canonical line bundle $\omega_{\loga{Z}/S(d)^{\dagger}}$.

  We claim that the quotient $\cZ/\mu_d$ is Nis-toroidal. If the action of $\mu_d$ on $\loga{V}$ is very tame, then $\loga{V}/\mu_d$ is regular, and hence smooth over $S^{\dagger}$ because $R$ has equal characteristic zero (see the proof of \cite[3.2.4]{BuNi}).
   Unfortunately, the action of $\mu_d$ on $\loga{V}$ need not be very tame, but we can make it very tame (at least Zariski-locally on $\cV$) by adding horizontal components to the logarithmic boundary; this process is called {\em rigidification} in   \cite[Exp.VIII]{gabber}.

   The proof of Lemma 5.3.8 in \cite[Exp.VIII]{gabber} produces an fpr $\mu_d$-equivariant surjective morphism of affine log schemes $h:\loga{W}\to \loga{V}$ such that the following properties hold:
    \begin{enumerate}
   \item $h$ is locally an open immersion on the underlying schemes;
   \item the log structure on $\loga{W}$ is the divisorial log structure induced by a strict normal crossings divisor on $\cW$ whose support contains the reduced special fiber $\cW_{k,\red}$ (thus $\loga{W}$ is regular and its log structure is Zariski);
   \item the $\mu_d$-action on $\loga{W}$ is very tame.
    \end{enumerate}
    Here we are using that the log structure on $\loga{V}$ is Zariski and that the group $\mu_d$ is split over $R$ to get a morphism $h$ that is locally an open immersion, rather than merely \'etale as in \cite[VIII.5.3.8]{gabber}.

    The very tameness of the $\mu_d$-action implies that $\loga{W}/\mu_d$ is regular and that the projection morphism $\loga{W}\to \loga{W}/\mu_d$ is \'etale \cite[VI.3.2]{gabber}. It also implies that $\loga{W}$ is $\mu_d$-strict; this follows, more precisely, from the fact that the stabilizer at each point $w$ acts trivially on the characteristic monoid $\mathcal{M}^{\sharp}_{\cW,w}$. Hence, the log structure on $\loga{W}/\mu_d$ is Zariski.
         Proposition \ref{prop:inert} implies that the morphism of algebraic spaces $\cW/\mu_d\to \cZ/\mu_d$ is a Nisnevich cover, because $\cW\to \cZ$ is an fpr Nisnevich cover. Thus $\cZ/\mu_d$ is Nis-toroidal; we set $\cX=\cZ/\mu_d$ and we define $\loga{X}$ by endowing $\cX$ with the log structure induced by $\cX_k$.

         It remains to prove that the divisor class $K_{\cX/R}+\cX_{k,\red}$ is torsion. We will prove that it is linearly equivalent to a rational multiple of the special fiber $\cX_k$. We define the rank one reflexive sheaf $\omega_{\loga{X}/S^{\dagger}}$ to be the pushforward to $\cX$ of the logarithmic relative canonical line bundle on the smooth locus of $\loga{X}\to S^{\dagger}$. Note that this smooth locus contains all the codimension one points of $\cX$, because $\cX$ and $\cX_{k,\red}$ are regular at these points.
         Let $\omega$ be a volume form on $X$. Then we must show that
         the divisor of $\omega$, viewed as a rational section of $\omega_{\loga{X}/S^{\dagger}}$, is a rational multiple of $\cX_{k}$.
        We denote by $\times^{\mathrm{fs}}$ the fiber product in the category of fine and saturated log schemes.  The morphism $$f:\loga{Z}\to \loga{X}\times^{\mathrm{fs}}_{S^{\dagger}} S(d)^{\dagger}$$ is an isomorphism over each generic point $\xi$ of $\cX_k$, because $f$ is finite and an isomorphism on the generic fibers, and $\loga{X}$ is log smooth over $S^{\dagger}$ at $\xi$ so that $\loga{X}\times^{\mathrm{fs}}_{S^{\dagger}} S(d)^{\dagger}$ is normal at each point lying over $\xi$. Since the logarithmic relative canonical line bundle is compatible with fine and saturated base change, it suffices to show that the divisor of $\omega\otimes_K K(d)$ on $\loga{Z}$ is a rational multiple of the special fiber of $\cZ$. This follows at once from the fact that $\omega_{\loga{Z}/S(d)^{\dagger}}$ is trivial.
\end{proof}

\begin{theorem}\label{thm:main}
Let $X$ be a geometrically connected, smooth and proper $K$-scheme with trivial canonical line bundle, and assume that $X$ has an equivariant Kulikov model over $R(d)$ for some $d>0$.
 Let $\omega$ be a volume form on $X$. Then $Z_{X,\omega}(T)$ lies in the ring
 $$\gro\left[T,\frac{1}{1-\LL^aT^b}\right]_{(a,b)\in \Z\times \Z_{>0},\,a/b=1-\min(\omega)}.$$
  Thus $1-\min(\omega)$ is the only pole of $Z_{X,\omega}(T)$.
 \end{theorem}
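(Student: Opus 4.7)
The plan is to reduce the computation of $Z_{X,\omega}(T)$ to a direct evaluation on a single, carefully chosen model and then exploit the torsion condition on the logarithmic relative canonical divisor.

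First, by Theorem \ref{thm:toroidal}, the variety $X$ admits a Nis-toroidal proper $R$-model $\cX$ such that the divisor class $K_{\cX/R}+\cX_{k,\red}$ is torsion. Choose a Nisnevich cover of $\cX$ by log smooth charts as in Definition \ref{def:toroidal}, and, on any such chart, write $\cX_k=\sum_{i\in I} N_i E_i$ and $\mathrm{div}_{\cX}(\omega)=\sum_{i\in I}\nu_i E_i$, the latter being the divisor of $\omega$ viewed as a rational section of $\omega_{\loga{X}/S^{\dagger}}$. The torsion of $K_{\cX/R}+\cX_{k,\red}$ forces $\sum \nu_i E_i$ to be a rational multiple of $\sum N_i E_i$, so $\nu_i/N_i$ takes the same constant value for every component $E_i$. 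By the description of the minimal weight in \eqref{sss:ess}, that constant must equal $\min(\omega)-1$.

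Next, I would establish a Nis-toroidal analogue of the explicit formula of Theorem \ref{thm-snc}. On each Nisnevich chart $\cU_j\to \cX$, the associated log algebraic space is smooth over $S^{\dagger}$, so the motivic integral of $\omega(d)$ is computed locally by the formula of Bultot and Nicaise \cite[6.2.1]{BuNi}. To globalize, one works in the category of algebraic spaces, using the motivic integration theory collected in the appendix (Section \ref{sec:appendix}); this is what makes the stratifications $\{E_J^o\}$, the tame Galois covers $\widetilde{E}_J^o$, and the equivariant Grothendieck classes $[\widetilde{E}_J^o]\in \gro$ globally meaningful, even though the log structure on $\cX$ is only chart-by-chart Zariski. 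The resulting formula has the same shape as in Theorem \ref{thm-snc}:
\[
Z_{X,\omega}(T)=\sum_{\emptyset\neq J\subset I}[\widetilde{E}_J^o](\LL-1)^{|J|-1}\prod_{j\in J}\frac{\LL^{-\nu_j}T^{N_j}}{1-\LL^{-\nu_j}T^{N_j}}.
\]
Once this formula is available, the conclusion is immediate: by the first paragraph, each $j\in I$ that contributes non-trivially satisfies $-\nu_j/N_j=1-\min(\omega)$, so every denominator $1-\LL^{-\nu_j}T^{N_j}$ has the form $1-\LL^aT^b$ with $a/b=1-\min(\omega)$. Therefore $Z_{X,\omega}(T)$ lies in the claimed subring, and combined with Theorem \ref{thm:largest}, which produces $1-\min(\omega)$ as a genuine pole, we also get the uniqueness of the pole.

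The principal obstacle is the second step: upgrading the Bultot--Nicaise formula from log smooth $R$-schemes to Nis-toroidal algebraic spaces, and checking that the $\widehat{\mu}$-equivariant Grothendieck classes $[\widetilde{E}_J^o]$ patch correctly across an fpr Nisnevich cover of $\cX$. This requires the change-of-variables formula for motivic integrals on algebraic spaces, its invariance under fpr Nisnevich covers, and a verification that the locally defined strata and their tame Galois covers glue to well-defined algebraic spaces over $k$ with a canonical good $\widehat{\mu}$-action. These ingredients are exactly the content of the appendix, so once they are in place, the specialization to the present torsion situation is formal.
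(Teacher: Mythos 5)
Your overall strategy is close to the paper's, but there is a genuine structural difference in the key step, and the difference matters.

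You propose to first establish a globalized Nis-toroidal analogue of the explicit snc formula (Theorem~\ref{thm-snc}), i.e., to produce global strata $E_J^o$ and tame covers $\widetilde E_J^o$ on $\cX$ and a formula of the same shape. The paper does \emph{not} do this, and the reason is precisely the obstacle you flag at the end: on a Nis-toroidal model the log structure is only chart-by-chart Zariski, the underlying algebraic space $\cX$ is in general \emph{not} snc (it is a quotient $\cZ/\mu_d$ and has worse singularities), and there is no globally defined stratification $\{E_J^o\}$ with canonical tame covers on $\cX$ to patch. The appendix does not supply the gluing you need; nothing in Section~\ref{sec:appendix} constructs global strata or their $\widehat\mu$-equivariant classes over a Nisnevich cover. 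What the appendix does give, and what the paper actually uses, is the \emph{additivity} statement Proposition~\ref{prop:add}: after refining the Nisnevich cover to a partition $V_1,\dots,V_s$ of $\cX_k$, the zeta function splits as $Z_{X,\omega}(T)=Z_1(T)+\cdots+Z_s(T)$, where each $Z_\ell(T)$ is computed on the scheme chart $\cU_{j(\ell)}$ via the log-smooth formula \cite[6.2.2]{BuNi} (note: \cite[6.2.2]{BuNi}, not the snc version \cite[6.2.1]{BuNi} that you cite; $\cU_{j(\ell)}$ is log smooth but typically not snc). Each of those local zeta functions then lies in the desired ring because the torsion condition forces $\nu_i=\alpha N_i$ on each chart. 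This sidesteps globalization entirely: the strata are never glued, only the zeta function itself is glued additively. So your first paragraph (torsion gives $\nu_i/N_i$ constant) and your final conclusion are sound, but the middle step as written would require extra work that the paper deliberately avoids; replacing the putative global formula by the additivity of Proposition~\ref{prop:add} is the missing move. Also, one small point: the paper does not argue the constant equals $\min(\omega)-1$ directly from \eqref{sss:ess} on the Nis-toroidal model; it derives this a posteriori from Theorem~\ref{thm:largest}, which is cleaner since \eqref{sss:ess} is phrased for snc-models over $R$.
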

 \begin{proof}
  We will deduce this result from Theorem \ref{thm:toroidal} and the computation of the motivic zeta function on a log smooth model in \cite{BuNi}.
 Let $\cX$ be a strictly toroidal proper $R$-model of $X$ such that the divisor class $K_{\cX/R}+\cX_{k,\red}$ is torsion. Then
         the divisor $\mathrm{div}_{\cX}(\omega)$ of $\omega$, viewed as a rational section of the rank one reflexive sheaf $\omega_{\loga{X}/S^{\dagger}}$, is equal to $\alpha\cX_{k}$ for some rational number $\alpha$. We will prove Theorem \ref{thm:main} with $1-\min(\omega)$ replaced by $-\alpha$;  it then follows automatically from Theorem \ref{thm:largest} that $\alpha=\min(\omega)-1$.

 Choose $\cU_j$ and $D_j$ as in Definition \ref{def:toroidal}, for  $j\in \{1,\ldots,r\}$. After a suitable refinement of the cover $\{\cU_1,\ldots,\cU_r\}$, we can find a partition of $\cX_k$ into subschemes $V_1,\ldots,V_s$ and, for every $\ell$ in $\{1,\ldots,s\}$, an element $j(\ell)$ in $\{1,\ldots,r\}$ such that $\cU_{j(\ell)}\times_{\cX}V_\ell\to V_\ell$ is an isomorphism.
   Then $Z_{X,\omega}(T)$ satisfies the following additivity property with respect to the Nisnevich cover $\{\cU_1,\ldots,\cU_r\}$ of $\cX$.
 For each $j$, we consider the motivic zeta function $Z^{\widehat{\mu}}_{\cU_j,\omega}(T)$ as defined in \cite[\S6.2]{BuNi}; here we abuse notation by writing $\omega$ for the restriction of $\omega$ to the generic fiber of $\cU_j$. This zeta function is a formal power series in $T$ with coefficients in the localized Grothendieck ring $\mathcal{M}^{\widehat{\mu}}_{(\cU_j)_k}$ of varieties over $(\cU_j)_k$ with good $\widehat{\mu}$-action.  For every $\ell$ in $\{1,\ldots,s\}$, we define the generating series $Z_{\ell}(T)$ by first applying the base change morphism $$\mathcal{M}^{\widehat{\mu}}_{(\cU_{j(\ell)})_k}\to \mathcal{M}^{\widehat{\mu}}_{V_\ell}$$ to the coefficients of $Z^{\widehat{\mu}}_{\cU_j,\omega}(T)$, and then the forgetful morphism $$\mathcal{M}^{\widehat{\mu}}_{V_\ell}\to \mathcal{M}^{\widehat{\mu}}_{k}.$$ Then it follows from Proposition \ref{prop:add} in the appendix  that
 $$Z_{X,\omega}(T)=Z_{1}(T)+\ldots +Z_{s}(T).$$ Thus it suffices to show that each of the zeta functions
 $Z_{\cU_j,\omega}(T)$ lies in $$\mathcal{M}^{\widehat{\mu}}_{(\cU_j)_k}\left[T,\frac{1}{1-\LL^aT^b}\right]_{(a,b)\in \Z\times \Z_{>0},\,a/b=-\alpha}.$$
 To simplify the notation, we will fix an index $j$ in $\{1,\ldots,r\}$ and write $\cU$ and $D$ instead of $\cU_j$ and $D_j$.

 Denote by
 $\loga{U}$ the space $\cU$ endowed with the divisorial Zariski log structure induced by $\cU_k+D$. By the definition of a strictly toroidal model, $\loga{U}$ is
smooth over $S^{\dagger}$.
 Write $\cU_k=\sum_{i\in I}N_i E_i$. For every $i\in I$, we denote by $\nu_i$ the order of $\omega$ along  $E_i$, where we view $\omega$ as a rational section of the line bundle
 $\omega_{\loga{U}/S^{\dagger}}$. Then $\sum_{i\in I}\nu_i E_i$ is the pullback of $\mathrm{div}_{\cX}(\omega)$ to $\cU$, so that $\nu_i=\alpha N_i$ for every $i\in I$.
 Now \cite[6.2.2]{BuNi} implies that $Z_{\cU,\omega}(T)$ lies in the ring
 $$\mathcal{M}^{\widehat{\mu}}_{\cU_k} \left[T,\frac{1}{1-\LL^aT^b}\right]_{(a,b)\in \Z\times \Z_{>0},\,a/b=-\alpha}.$$
 \end{proof}
\begin{cor}\label{cor:MP}
Let $X$ be a geometrically connected, smooth and proper $K$-scheme with trivial canonical line bundle, and assume that $X$ has an equivariant Kulikov model over $R(d)$ for some $d>0$. Then $X$ satisfies the Monodromy Property in Definition \ref{def:MP}.
\end{cor}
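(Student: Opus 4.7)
The plan is to derive this corollary as a direct combination of Theorem \ref{thm:main} and Theorem \ref{thm:hodge}, by taking the finite set $S$ in Definition \ref{def:MP} to be the singleton $\{1-\min(\omega)\}$.

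First, I would apply Theorem \ref{thm:main}: the hypothesis that $X$ admits an equivariant Kulikov model already gives that $Z_{X,\omega}(T)$ lies in the subring
$$\gro\!\left[T,\frac{1}{1-\LL^a T^b}\right]_{(a,b)\in \Z\times \Z_{>0},\,a/b=1-\min(\omega)}$$
of $\gro\llbr T\rrbr$. This is exactly the algebraic condition in Definition \ref{def:MP} for $S=\{1-\min(\omega)\}$, so only the eigenvalue condition remains to be checked.

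Second, I would invoke Theorem \ref{thm:hodge}, which does not require any Kulikov hypothesis and holds in our generality. It asserts that the semisimple part of $\sigma$ acts on the one-dimensional complex vector space $F^n H^n(X\times_K K^a,\C)$ by the scalar $\exp(-2\pi i\min(\omega))$. Since this space is nonzero and $\exp(-2\pi i\min(\omega))=\exp(2\pi i(1-\min(\omega)))$, this number is an eigenvalue of $\sigma$ on $H^n(X\times_K K^a,\C)$, and therefore also on $H^n(X\times_K K^a,\Q)$, because eigenvalues of an endomorphism are unchanged under extension of scalars. This gives the monodromy eigenvalue condition for $s=1-\min(\omega)\in S$.

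There is no genuine obstacle here: the real work is carried by the two theorems just cited, one exploiting the equivariant Kulikov model together with the log-smooth formula of Bultot and the second author to collapse all possible poles into a single one, and the other using the Stewart-Vologodsky limit mixed Hodge structure to pin down the corresponding eigenvalue of $\sigma$. The corollary simply records the lucky fact that both statements produce the same rational number, so that condition (1) and condition (2) of the Monodromy Property can be met with the same one-element set $S$.
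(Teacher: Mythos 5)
Your proposal is correct and follows exactly the same route as the paper, which simply records the corollary as an immediate consequence of Theorems \ref{thm:main} and \ref{thm:hodge}; you have just spelled out the (easy) bookkeeping—take $S=\{1-\min(\omega)\}$, note $\exp(-2\pi i\min(\omega))=\exp(2\pi i(1-\min(\omega)))$, and observe that an eigenvalue on $F^nH^n(X\times_K K^a,\C)$ is in particular an eigenvalue of $\sigma$ on $H^n(X\times_K K^a,\Q)$ in the sense of Definition \ref{def:MP}.
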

\begin{proof}
This is an immediate consequence of Theorems \ref{thm:hodge} and \ref{thm:main}.
\end{proof}

\sss Corollary \ref{cor:MP} yields, in particular, a new proof of the Monodromy Property for abelian varieties, by the existence of equivariant Kulikov models for abelian varieties (Theorem \ref{thm:abkulikov}). However, the proof we have given in Section \ref{sec:abelian} provides finer information on the coefficients of the zeta function, which is why we have included it in the paper.

\begin{exam}\label{ex:nokulikov}
There are examples of geometrically connected, smooth and proper $K$-schemes with trivial canonical line bundle that do not have an equivariant Kulikov model over $R(d)$ for any $d>0$, and in such examples, the motivic zeta function may have more than one pole.
 For instance, consider the closed subscheme $\cY$ of $\mathrm{Proj}\,R[x,y,z,w]$ defined by the homogeneous equation
$$ x^2 w^2 + y^2 w^2 + z^2 w^2 + x^4 + y^4 + z^4 + t w^4 =0.$$
 One checks by direct computation that $ \mathscr{Y} $ is regular.
   Its generic fiber $X=\cY_K$ is a smooth $K3$ surface, and its special fiber $ \mathscr{Y}_k $ a singular $K3$ surface with a unique singularity at $O=(0:0:0:1) $, of type $A_1$. We can construct an snc-model $\cX$ of $X$ by blowing up $\cY$ at $O$. The strict transform $D$ of $\mathscr{Y}_k$ is smooth and intersects the exceptional divisor $E\cong \mathbb{P}^2_k$ transversally along a smooth conic $C$, and we have $\cX_k=D+2E$.

 Let $\omega$ be a volume form on $X$ that extends to a relative volume form on $\cY$ at the generic point of $\cY_k$. Then the motivic zeta function $Z_{X,\omega}(T)$ has two poles, namely, $0$ and $-1/2$. Indeed, applying the formula in Theorem \ref{thm-snc} to the snc-model $\cX$, we find
 $$Z_{X,\omega}(T)=[D^o]\frac{T}{1-T}+[\widetilde{E}^o]\frac{\LL^{-1}T^2}{1-\LL^{-1}T^2}+[C]\frac{\LL^{-1}T^3}{(1-T)(1-\LL^{-1}T^2)}.$$

 In particular, it follows from Theorem \ref{thm:main} that $X$ does not have an equivariant Kulikov model over $R(d)$ for any $d>0$ (although it has good reduction over $R(2)$ in the category of algebraic spaces). However, the surface $X$ still satisfies the Monodromy Property: computing the monodromy zeta function of $X$ on the snc-model $\cX$ using the A'Campo formula from \eqref{sss:acampo}, we see that $-1$ is an eigenvalue of the monodromy transformation $\sigma$ on $H^2(X\times_K K^a,\Q)$.
\end{exam}

\section{Examples}\label{sec:examples}
\subsection{The semi-stable case}\label{ss:semistable}
\sss Let $X$ be a geometrically connected smooth proper $K$-scheme with trivial canonical line bundle, and let $\omega$ be a volume form on $X$. Assume that $X$ has semi-stable reduction.
  In this case, the Monodromy Property in Definition \ref{def:MP} is trivially satisfied: from the formula for the motivic zeta function in Theorem \ref{thm-snc}, we immediately see that all the poles of $Z_{X,\omega}(T)$ are integers. Nevertheless, it is still
interesting to investigate how the geometry of $X$ is reflected in other aspects of the motivic zeta function.

\sss The case of $K3$-surfaces was studied in detail by Stewart and Vologodsky in \cite{StVo}. Their analysis is based on the so-called {\em Kulikov classification} of semi-stable degenerations of $K3$ surfaces. We have already recalled in \eqref{sss:kuliK3} that every $K3$-surface $X$ has a Kulikov model over a finite extension $K'$ of $K$. We do not know if it suffices to take an extension $K'$ where $X$ acquires semi-stable reduction: the algebro-geometric construction of Kulikov models consists of running a Minimal Model Program on a semi-stable model and resolving the singularities by means of small resolutions, and the latter step may require an additional extension of $K'$ (see the proof of \cite[2.1]{liedtke-matsumoto}). In any case, if $X$ has a Kulikov model $\cX$ over $R$, then the possible special fibers $\cX_k$ have been completely classified by Kulikov, Persson and Pinkham. They are subdivided into three types:
\begin{itemize}
\item {\em Type I}. The special fiber $\cX_k$ is smooth. This happens if and only if $\sigma$ acts trivially on $H^2(X\times_K K^a,\Q)$.

\item {\em Type II}. The special fiber $\cX_k$ is a chain of surfaces; the interior surfaces are elliptic ruled, the outer ones are rational, and the intersection curves are elliptic. This happens if and only if $\sigma$ has a Jordan block of rank $2$ on $H^2(X\times_K K^a,\Q)$, but no Jordan block of rank $3$.

 \item {\em Type III}. The special fiber $\cX_k$ is a union of rational surfaces, and its dual intersection complex is a triangulated $2$-sphere. This happens if and only if $\sigma$ has a Jordan block of rank $3$ on $H^2(X\times_K K^a,\Q)$.
\end{itemize}
 Using \eqref{eq:rescale}, we can rescale $\omega$ in such a way that $\min(\omega)=1$.
 Then, in the Type I case, the motivic zeta function $Z_{X,\omega}(T)$ equals $[\cX_k]T(1-T)^{-1}$. In the Type II and III cases, Stewart and Vologodsky gave an elegant formula for the motivic zeta function $Z_{X,\omega}(T)$ in terms of the limit mixed Hodge structure associated with $X$: the coefficients of $Z_{X,\omega}(T)$ are computed in Theorem 1 of \cite{StVo}.

 The Kulikov classification provides in particular a description of the essential skeleton $\Sk(X)$. This description can also be proven directly, by means of the following result (which does not assume that $X$ has a Kulikov model over $R$).
\begin{thm}\label{thm:sktopology}
Let $X$ be a geometrically connected, smooth and projective $K$-variety of dimension $n$ with trivial canonical line bundle.
\begin{enumerate}
\item \label{it:psman1} The essential skeleton $\Sk(X)$ is a connected pseudo-manifold with boundary.
\item \label{it:psman3}  The degeneracy index $\delta(X)$ equals $n$ if and only if there is a Jordan block of monodromy on $H^n(X\times_K K^a,\Q)$ of size $n+1$.
\item \label{it:psman4} If $X$ has semi-stable reduction and $\delta(X)=n$, then $\Sk(X)$ is a closed pseudo-manifold. If, moreover, $h^{i,0}(X)=0$ for $0<i<n$, then $\Sk(X)$ has the rational homology of the $n$-dimensional sphere $S^n$.
\item \label{it:psman2} Assume that $X$ has semi-stable reduction and
 trivial geometric fundamental group (that is, $\pi_1^{\mathrm{\acute{e}t}}(X\times_K K^a)=\{1\}$). Then the profinite completion of $\pi_1(\Sk(X))$ is trivial.
 \end{enumerate}
\end{thm}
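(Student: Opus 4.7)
\textbf{Strategy and Part (\ref{it:psman1}).} The plan is to work throughout with a minimal qdlt-model $\cX_{\min}$ of $X$, whose existence is guaranteed by \cite[Corollary 4]{KNX} and whose Berkovich skeleton coincides with $\Sk(X)$ by \cite[Theorem 24]{KNX}. This reduces the four assertions to combinatorial and Hodge-theoretic questions on the dual complex of $\cX_{\min}$. For connectedness, combine the strong deformation retract $X^{\an}\to \Sk(X)$ of \cite[4.2.4]{NiXu} with connectedness of $X^{\an}$, which follows from geometric connectedness of $X$. The pseudo-manifold with boundary structure is a general fact about dual complexes of minimal dlt degenerations of $K$-trivial varieties: adjunction on a dlt pair with $K$-trivial total space forces each codimension-one face of the dual complex to be incident to at most two maximal faces, which is the defining combinatorial property of a pseudo-manifold with boundary.

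\textbf{Part (\ref{it:psman3}).} The forward direction follows at once from Theorem \ref{thm:hodge}, which produces a Jordan block of rank at least $\delta(X)+1 = n+1$; since the monodromy weight filtration on $H^n(X\times_K K^a, \Q)$ has length at most $n+1$, a block of size $n+1$ is the maximum possible and the inequality is sharp. For the converse, suppose a Jordan block of size $n+1$ exists on $H^n$ with eigenvalue $\lambda$. Such a block requires a nonzero element of $F^n$ lying in the $\lambda$-generalized eigenspace. Since $X$ is Calabi-Yau, $\dim_{\C} F^n H^n(X\times_K K^a, \C) = h^{n,0}(X) = 1$, and by Theorem \ref{thm:hodge} the semisimple part of $\sigma$ acts on this line by $\exp(-2\pi i \min(\omega))$; hence $\lambda = \exp(-2\pi i \min(\omega))$ and the entire $F^n$ part lies at the top of this block. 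The proof of Theorem \ref{thm:hodge} identifies the maximal weight of $F^n H^n$ with $n+\delta(X)$, and equating this with $2n$ (the top of a size-$(n+1)$ block) yields $\delta(X) = n$.

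\textbf{Part (\ref{it:psman4}).} Assume semi-stable reduction and $\delta(X) = n$. After rescaling $\omega$ so that $\min(\omega) = 1$, one can find a semi-stable snc-model for which the essential skeleton coincides with the full Berkovich skeleton and is $n$-dimensional. Part (\ref{it:psman1}) already gives a pseudo-manifold with boundary; the closed property is supplied by adjunction on the CY semi-stable model, where every interior codimension-one face corresponds to a $2$-fold intersection stratum, and $K$-triviality combined with $\dim \Sk(X) = n$ forces it to be incident to exactly two top-dimensional faces. For the rational homology claim, use the Steenbrink weight spectral sequence together with the identification $H^i(\Sk(X), \Q) \cong \mathrm{gr}^W_0 H^i(X\times_K K^a, \Q)$ known for semi-stable degenerations. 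The Calabi-Yau hypotheses $h^{i,0}(X) = 0$ for $0 < i < n$ then force the Hodge pieces of weight zero to vanish in those degrees, so $H^i(\Sk(X), \Q) = 0$ for $0 < i < n$; one-dimensionality in degrees $0$ and $n$ follows from connectedness and Poincar\'e duality on the closed pseudo-manifold.

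\textbf{Part (\ref{it:psman2}) and main obstacle.} For the fundamental group, combine the deformation retract $X^{\an} \to \Sk(X)$ with its induced isomorphism $\pi_1^{\mathrm{top}}(X^{\an}) \cong \pi_1(\Sk(X))$, profinite completion, and Berkovich's comparison $\pi_1^{\et}(X^{\an}) \cong \widehat{\pi_1^{\mathrm{top}}(X^{\an})}$ for smooth proper analytic spaces; a GAGA-style argument then yields $\pi_1^{\et}(X^{\an}) \cong \pi_1^{\et}(X)$. Semi-stable reduction ensures all Galois contributions are tame, so any \'etale cover of $X$ arises from an \'etale cover of $X\times_K K^a$ equipped with a tame Galois action; triviality of $\pi_1^{\et}(X\times_K K^a)$ then forces $\widehat{\pi_1(\Sk(X))} = 1$. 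I expect the most delicate step to be the weight-zero identification in (\ref{it:psman4}), which requires controlling the weight spectral sequence carefully enough to conclude both the vanishing in interior degrees and nontriviality in top degree; the fundamental group comparison in (\ref{it:psman2}) is a close second, since one must keep track of how the Galois action interacts with the retraction to the skeleton.
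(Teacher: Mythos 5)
Your approach to (\ref{it:psman3}) is correct and is genuinely more self-contained than the paper's (the paper reduces to semi-stable reduction via Proposition~\ref{prop:bc} and cites~\cite[4.1.10]{NiXu}): you deduce both directions from the single fact, established in the proof of Theorem~\ref{thm:hodge}, that $F^nH^n$ is one-dimensional and concentrated in weight $n+\delta(X)$, which combined with the Hodge--Tate purity of $\mathrm{gr}^W_{2n}H^n$ forces $n+\delta(X)\geq 2n$ once $\mathrm{gr}^W_{2n}H^n\neq 0$. For parts (\ref{it:psman1}) and (\ref{it:psman4}) the paper simply cites~\cite[4.1.4, 4.1.7, 4.1.10]{NiXu}; your sketches are in the right spirit but leave real work undone.

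Two concrete gaps. First, in (\ref{it:psman4}) you invoke $H^i(\Sk(X),\Q)\cong\mathrm{gr}^W_0H^i(X\times_KK^a,\Q)$ ``known for semi-stable degenerations.'' What is known is that $\mathrm{gr}^W_0H^i$ is the cohomology of the \emph{full} dual complex $\Gamma(\cX_k)$ of a semi-stable snc-model; identifying this with $\Sk(X)$ requires passing to a minimal $dlt$-model (via~\cite{KNX}) and is exactly where the assertion $\delta(X)=n$ and the Calabi--Yau hypothesis enter. Your claim that ``one can find a semi-stable snc-model for which the essential skeleton coincides with the full Berkovich skeleton'' is asserted, not proved, and snc cannot in general be combined with minimality. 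Moreover, ``Poincar\'e duality on the closed pseudo-manifold'' does not give $H^n(\Sk(X),\Q)\cong\Q$ for free: closed pseudo-manifolds are not Poincar\'e duality spaces and may fail to be orientable, so the top-degree statement needs a separate argument (in~\cite{NiXu} it comes from the weight spectral sequence, not from duality on the skeleton).

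Second, in (\ref{it:psman2}) the comparison ``$\pi_1^{\et}(X^{\an})\cong\widehat{\pi_1^{\mathrm{top}}(X^{\an})}$'' is false over the non-algebraically-closed field $K$: there are finite \'etale covers of $X^{\an}$ (e.g.\ the Kummer-type covers coming from $K(d)/K$) that are not topological covers, so the \'etale fundamental group strictly dominates the profinite completion of the topological one. The paper avoids this by first passing to $\widehat{K^a}$: under the semi-stable reduction hypothesis, Proposition~\ref{prop:bc} shows that $\Sk(X)\to\Sk(X\times_K\widehat{K^a})$ is a homeomorphism, and over an algebraically closed complete field Berkovich's GAGA gives the one implication that is actually needed, namely that every nontrivial finite \emph{topological} cover of $(X\times_K\widehat{K^a})^{\an}$ is induced by a nontrivial finite \'etale cover of $X\times_K\widehat{K^a}$. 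Your invocation of ``semi-stable reduction ensures all Galois contributions are tame'' is beside the point (tameness is automatic in characteristic zero); the hypothesis is used to guarantee that the skeleton is unchanged under base change to $\widehat{K^a}$, and your argument omits this step.
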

\begin{proof}
\eqref{it:psman1} This is proven in \cite[4.1.4]{NiXu}.

\eqref{it:psman3} It suffices to prove this statement after base change to some finite extension of $K$, by Proposition \ref{prop:bc}. Thus, we may assume that $X$ has a projective semi-stable model, by the semi-stable reduction theorem. In that case, the result was proven in \cite[4.1.10]{NiXu}.

\eqref{it:psman4} This follows from \cite[4.1.7 and 4.1.10]{NiXu} if $X$ has a projective semi-stable model, and we can reduce to this case by means of Proposition \ref{prop:bc} and the semi-stable reduction theorem.

\eqref{it:psman2}
    We denote by $\widehat{K^a}$ the completion of $K^a$. Then $\pi_1^{\mathrm{\acute{e}t}}(X\times_K \widehat{K^a})=\{1\}$ by the invariance of the fundamental group under change of algebraically closed base field for proper schemes (or for separated schemes of finite type in characteristic zero).
    The essential skeleton $\Sk(X)$ is homotopy equivalent to $X^{\an}$, and the morphism $(X\times_K \widehat{K^a})^{\an}\to X^{\an}$ is a homotopy equivalence because $X$ has semi-stable reduction (this follows, for instance, from Proposition \ref{prop:bc}). Every non-trivial finite topological cover of  $(X\times_K \widehat{K^a})^{\an}$ gives rise to a non-trivial finite \'etale cover of $X\times_K \widehat{K^a}$ by non-archimedean GAGA \cite[3.4.13]{berkbook}. It follows that the profinite completion of $\pi_1(\Sk(X))$ is trivial.
\end{proof}

\begin{cor}\label{cor:skeletonK3}
Let $X$ be a $K3$-surface with semi-stable reduction. Then the maximal size of a Jordan block of monodromy on $H^2(X\times_K K^a,\Q)$ is equal to $\delta(X)+1$. If $\delta(X)=0$ then $\Sk(X)$ is a point; if $\delta(X)=1$ then $\Sk(X)$ is homeomorphic to the interval $[0,1]$; if $\delta(X)=2$ then $\Sk(X)$ is homeomorphic to $S^2$, the two-dimensional sphere.
\end{cor}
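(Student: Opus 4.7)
My strategy is to invoke the Kulikov classification of semistable $K3$ degenerations recalled in Section~\ref{ss:semistable}. Starting from a Kulikov model $\cX$ of $X$ over some $R(d)$ guaranteed by \ref{sss:kuliK3}, semistability of $X$ combined with Proposition~\ref{prop:bc} yields a homeomorphism $\Sk(X(d)) \to \Sk(X)$; and because $\cX_k$ is reduced, the logarithmic relative canonical bundle coincides with $\omega_{\cX/R(d)}$, which is trivial, so picking a nowhere-vanishing global section $\omega$ gives $\nu_i = 0$ for every $i$. Hence $\Sk(X)$ is homeomorphic to the Berkovich skeleton $\Sk(\cX)$, i.e., to the dual intersection complex of $\cX_k$. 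The Kulikov classification partitions the possibilities for $\cX_k$ into Types I, II, III, characterised equivalently by this dual complex being a point, an interval, or a triangulated $2$-sphere, and by the largest Jordan block of $\sigma$ on $H^2(X \times_K K^a, \Q)$ having size $1$, $2$, or $3$.

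Given this setup the topological claims follow in each case. For $\delta(X) = 0$, Theorem~\ref{thm:sktopology}(\ref{it:psman1}) and $0$-dimensionality force $\Sk(X)$ to be a single point. For $\delta(X) = 1$, the same result makes $\Sk(X)$ a connected $1$-dimensional pseudo-manifold with boundary, hence homeomorphic to $[0,1]$ or to $S^1$; since $\pi_1^{\mathrm{\acute{e}t}}(X \times_K K^a) = 1$ for $K3$ surfaces, Theorem~\ref{thm:sktopology}(\ref{it:psman2}) forces the profinite completion of $\pi_1(\Sk(X))$ to be trivial, ruling out $S^1$. For $\delta(X) = 2 = \dim(X)$, Theorem~\ref{thm:sktopology}(\ref{it:psman4}) (using $h^{1,0}(X) = 0$) together with Theorem~\ref{thm:sktopology}(\ref{it:psman2}) produce a closed $2$-dimensional pseudo-manifold with the rational homology of $S^2$ and trivial profinite $\pi_1$-completion, and I would complete the identification $\Sk(X) \cong S^2$ via the Kulikov classification, which says that the dual complex of $\cX_k$ in Type III is, by definition, a triangulated $2$-sphere.

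For the Jordan block equality, semistability makes $\sigma$ unipotent on $H^2$, and the monodromy weight filtration centred at $n = 2$ bounds the maximal Jordan block size above by $n+1 = 3$. Theorem~\ref{thm:hodge} provides the lower bound max size $\geq \delta(X) + 1$, while Theorem~\ref{thm:sktopology}(\ref{it:psman3}) gives max size $= 3 \iff \delta(X) = 2$. Combining these handles $\delta(X) = 2$ (max $= 3$) and $\delta(X) = 1$ (max $\in \{2,3\} \setminus \{3\} = \{2\}$). For $\delta(X) = 0$ the Kulikov reduction above forces $\cX_k$ to be a single smooth component (Type I), so $\sigma$ acts trivially on $H^2$ and the maximal Jordan block has size $1 = \delta(X) + 1$.

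The principal obstacle is the $\delta(X) = 2$ identification $\Sk(X) \cong S^2$: the paper's intrinsic tools only yield a closed $2$-dimensional pseudo-manifold with the correct rational homology and trivial profinite $\pi_1$-completion, and this is not \emph{a priori} a topological manifold. An internal proof avoiding the Kulikov classification would require a finer local analysis, for instance showing that the link of every vertex of $\Sk(X)$ is a single circle.
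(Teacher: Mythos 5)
Your proposal takes a genuinely different route from the paper: you route the argument through the Kulikov classification recalled in Section~\ref{ss:semistable}, whereas the paper deliberately avoids it (see the sentence preceding Theorem~\ref{thm:sktopology}, which announces a proof not relying on the existence of a Kulikov model over $R$). Concretely, the paper reduces the Corollary to Theorem~\ref{thm:sktopology} together with the single new fact that $\delta(X)=0$ if and only if the monodromy action on $H^2$ is trivial; the direction ``$\delta(X)=0\Rightarrow$ trivial monodromy'' is proved with the Clemens--Schmid exact sequence (citing Persson~\cite[2.7.5]{persson}), and the converse follows from Theorem~\ref{thm:hodge}. Your reliance on \eqref{sss:kuliK3} reintroduces exactly the classification that the paper wants to bypass; this is a legitimate argument, but a different one.

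The ``principal obstacle'' you flag in the $\delta(X)=2$ case is in fact resolvable by an elementary Euler characteristic computation, which is what the paper's terse ``This implies that $\Sk(X)$ is homeomorphic to $S^2$'' silently invokes. A closed $2$-pseudo-manifold $P$ (in the standard sense, which includes strong connectedness) has a normalization $\widetilde P$ that is a closed surface, and strong connectedness forces $\widetilde P$ to be connected since the singular set of $P$ lies in the $0$-skeleton. If $P$ is also a rational homology $2$-sphere, then $H_2(P;\Q)\cong H_2(\widetilde P;\Q)\cong\Q$ forces $\widetilde P$ orientable, say of genus $g$, and the identity $\chi(P)=\chi(\widetilde P)-\sum_x(k_x-1)$, where $k_x$ is the number of preimages of the singular point $x$, combined with $\chi(P)=2$, yields $2g+\sum_x(k_x-1)=0$. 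Hence $g=0$ and there are no singular points, so $P\cong S^2$. Neither $\pi_1(\Sk(X))$ nor the Kulikov classification is needed here; you could have closed the gap you identified with this observation, and with it your argument would match the paper's proof for the $\delta(X)=2$ case while still differing from it for $\delta(X)\in\{0,1\}$.
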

\begin{proof}
 Theorem \ref{thm:sktopology} implies that $\delta(X)=2$ if and only if there is a Jordan block of monodromy on $H^2(X\times_K K^a,\Q)$ of size $3$. It also states that, in that case,  $\Sk(X)$ is a $2$-dimensional closed pseudo-manifold and a rational homology $2$-sphere. This implies that $\Sk(X)$ is homeomorphic to $S^2$.

To finish the proof, it suffices to show that $\delta(X)=0$ if and only if the monodromy action on $H^2(X\times_K K^a,\Q)$ is trivial; the remainder of the statement then follows from Theorem \ref{thm:sktopology}. If the monodromy action on $H^2(X\times_K K^a,\Q)$ is trivial, then Theorem \ref{thm:hodge} implies that $\delta(X)=0$. Assume, conversely, that $\delta(X)=0$. We must show that the monodromy action on $H^2(X\times_K K^a,\Q)$ is trivial. We already know that it is unipotent, by the assumption that $X$ has semi-stable reduction. Thus it is enough to prove the triviality of the monodromy after a finite extension of $K$, so that we can assume that $X$ has a projective semi-stable model $\cX$ over $R$. The unique point of $\Sk(X)$ corresponds to an irreducible component $E$ of $\cX_k$. Let $\omega$ be a generator for the module of logarithmic relative canonical forms
on $\cX$ over $R$. Then, by the definition of $\Sk(X)$, we know that $\omega$ restricts to a non-zero canonical form on $E$ and vanishes along all the other components of $\cX_k$.
 Now it follows from the Clemens-Schmid exact sequence that the monodromy action on $H^2(X\times_K K^a,\Q)$ is trivial (see \cite[2.7.5]{persson} -- one can reduce to the case where $\cX$ is defined over an algebraic curve in the same way as in the proof of Theorem \ref{thm:hodge}).
  \end{proof}

\sss Corollary \ref{cor:skeletonK3} has a partial generalization to higher dimensions. Let $X$ be a geometrically connected, smooth and projective $K$-variety of dimension $n$ with trivial canonical bundle and trivial geometric fundamental group. Assume, that $X$ has semi-stable reduction, $h^{i,0}(X)=0$ for $0<i<n$ and  $\delta(X)=\dim(X)$ (the last condition is sometimes expressed by saying that $X$ is {\em maximally degenerate} or {\em maximally unipotent}). Then Kontsevich and Soibelman's non-archimedean interpretation of the SYZ conjecture in the theory of mirror symmetry suggests that $\Sk(X)$ is homeomorphic to the $n$-sphere $S^n$.  If $n=3$, this was proven by Koll\'ar and Xu in  \cite[\S34]{kollar-xu}. Note that, in that case, it suffices to prove that $\Sk(X)$ is a topological manifold, by residual finiteness of the fundamental groups of $3$-manifolds and the Poincar\'e conjecture. Koll\'ar and Xu also proved the $n=4$ case under the  assumption that $X$ has a minimal $dlt$-model that is $snc$.

\subsection{Triple-point free degenerations of $K3$-surfaces}
\sss Example \ref{ex:nokulikov} is a special case of a {\em triple-point free} degeneration of $K3$-surfaces. These are $K3$-surfaces over $K$ that have an snc-model $\cX$ over $R$ such that no three distinct irreducible components of $\cX_k$ intersect; equivalently, the dual intersection complex of $\cX_k$ has dimension at most $1$. Apart from the case of elliptic curves, this seems to be the only class of varieties with trivial canonical line bundle were a classification of snc-models has been made without the assumption of semi-stable reduction: this is the Crauder-Morrison classification in \cite{crauder-morrison}. More precisely, Crauder and Morrison classified triple-point free degenerations of surfaces with {\em numerically} trivial canonical bundle. Unfortunately, it is not clear from their work which combinatorial types of special fibers can really occur for degenerations of $K3$-surfaces.

\sss Minimal triple-point free degenerations $\cX$ of $K3$ surfaces $X$ (in the category of algebraic spaces) with $\delta(X)=0$ are called {\em flowerpot degenerations} because of the particular shape of the dual graph of the special fiber: a {\em pot} (the unique point in the essential skeleton $\Sk(X)$) and, attached to it, a finite number of {\em flowers} (corresponding to chains of surfaces). In the case $\delta(X)=1$, one speaks of {\em chain degenerations}: the essential skeleton $\Sk(X)$ is homeomorphic to a line segment, and in the dual graph of $\cX_k$ we again find a finite number of flowers emanating from $\Sk(X)$. We have verified the Monodromy Property for all the flowerpot degenerations and most of the chain degenerations, and we are currently investigating the remaining cases in collaboration with A.~Jaspers. The results will appear in Jaspers's PhD thesis; see \cite{annelies} for an announcement.

\subsection{Kummer surfaces and Hilbert schemes}

\sss In the remaining paragraphs of this section, we shall discuss several examples of Calabi-Yau varieties that admit equivariant Kulikov models after a suitable extension of $K$.
 We will first consider Kummer K3 surfaces; precisely, we shall prove that the Kummer surface associated to an abelian surface with potential good reduction admits a smooth equivariant Kulikov model after a suitable extension in the base. In the discussion below, we will follow closely \cite[Sec.~4]{Mat}.
 For any field $F$, we denote by $F^a$ a fixed algebraic closure of $F$. For any abelian variety $B$ over $F$, we denote by $\iota_B$ the multiplication by $-1$ on $B$.


\begin{definition}
Let $X$ be a surface defined over a field $F$ of characteristic different from $2$. We say that $X$ is a Kummer surface if there exists an abelian surface $ B$ over $F^a $ such that $ X \times_F F^a $ is isomorphic to the minimal desingularization $ \mathrm{Km}(B) $ of the quotient surface $ B/H $, where $H$ is the group generated by $\iota_B$.
\end{definition}

\begin{remark} Alternatively, one can also construct $ \mathrm{Km}(B) $ in the following way. Let $ \tilde{B} $ denote the blow-up of $B$ in the subscheme $B[2]$ of $2$-torsion points. Then the action of $ H $ lifts uniquely to an action on $ \tilde{B} $ and one has that $ \mathrm{Km}(B) = \tilde{B}/H $.
\end{remark}

\sss\label{subsec:Kummer} Let $A$ be an abelian surface over $K$, and let $ X = \mathrm{Km}(A) $ be the Kummer surface associated to $A$. We assume that $A$ has \emph{potential good reduction}; by this we mean that there exists an integer $ d > 1 $ such that the N\'eron model $\cA(d) $ of $A \times_K K(d)$ is smooth and proper over $R(d)$. Then, by \cite[Lem.~4.2]{Mat}, also $X \times_K K(d) $ admits a smooth and proper $R(d)$-model. For later use, we provide a sketch of the argument.
 Denote by $ \iota $ the extension of $ \iota_{A \times_K K(d)} $ to $\cA (d)$, and by $H$ the group of $R(d)$-automorphisms of $\cA (d)$ generated by $ \iota $. Let $ \widetilde{\cA (d)}  \to \cA (d)$ be the blow-up in the closed subscheme $ (\cA (d))[2] $. Then the $H$-action lifts uniquely to $\widetilde{\cA (d)} $. By \cite[Lem.~1.2]{Mat}, $ \widetilde{\cA (d)} $ is smooth over $R(d)$, and the quotient $ \cX (d) := \widetilde{\cA (d)}/H $ forms a smooth and proper model of $ X \times_K K(d) $. Note, in particular, that the special fiber of $ \cX (d) $ is isomorphic to the Kummer surface associated to $ \cA (d)_k $.  In order to apply the results from Section \ref{sec:kulikov}, we need to check that the construction in (\ref{subsec:Kummer}) is Galois-equivariant.




\begin{prop}\label{prop:Kummer}
Let $A$ be an abelian surface over $K$ with potential good reduction. Then $ X = \mathrm{Km}(A)$ admits a smooth equivariant Kulikov model.
\end{prop}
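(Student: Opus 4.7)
The plan is to show that the construction recalled in \eqref{subsec:Kummer} is automatically $\mu_d$-equivariant, and that the resulting smooth proper $R(d)$-model has trivial relative canonical line bundle, so it qualifies as a smooth equivariant Kulikov model.

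First, I would choose a positive integer $d$ such that $\mathcal{A}(d)$ is a smooth proper abelian scheme over $R(d)$, which exists by the assumption of potential good reduction. The Galois action of $\mu_d = \mathrm{Gal}(K(d)/K)$ on $A\times_K K(d)$ extends uniquely to $\mathcal{A}(d)$ by the N\'eron mapping property, and this extended action commutes with the involution $\iota$, because $\iota$ is already defined over $K$ (so the same uniqueness of extension applied to $\iota$ forces the commutation). In particular, the closed subscheme $(\mathcal{A}(d))[2]$, being intrinsically defined as the kernel of multiplication by $2$, is $\mu_d$-stable.

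Next, I would argue that the blow-up $\widetilde{\mathcal{A}(d)}\to \mathcal{A}(d)$ along the $\mu_d$-stable subscheme $(\mathcal{A}(d))[2]$ carries a canonical lift of the $\mu_d$-action, which still commutes with the lifted involution $\iota$ by functoriality of the blow-up. Therefore, the quotient $\mathcal{X}(d)=\widetilde{\mathcal{A}(d)}/H$ inherits a $\mu_d$-action, and on the generic fiber this action recovers the natural Galois action on $X\times_K K(d)=\mathrm{Km}(A\times_K K(d))$ induced from $\mathcal{A}(d)_{K(d)}=A\times_K K(d)$.

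Finally, I would verify that $\mathcal{X}(d)$ is a (smooth) Kulikov model, i.e.\ that $\omega_{\mathcal{X}(d)/R(d)}$ is trivial. By the recalled construction, $\mathcal{X}(d)$ is smooth and proper over $R(d)$, with special fiber $\mathcal{X}(d)_k\cong \mathrm{Km}(\mathcal{A}(d)_k)$, a smooth K3 surface, so $\omega_{\mathcal{X}(d)_k}\cong \mathcal{O}_{\mathcal{X}(d)_k}$. Since $h^0(\mathcal{X}(d)_k,\omega_{\mathcal{X}(d)_k})=1$ and the relative dualising sheaf is flat, cohomology and base change identifies $H^0(\mathcal{X}(d),\omega_{\mathcal{X}(d)/R(d)})$ with a free $R(d)$-module of rank one whose fiber over $k$ generates $\omega_{\mathcal{X}(d)_k}$; a generator therefore lifts to a global section of $\omega_{\mathcal{X}(d)/R(d)}$, and Nakayama's lemma together with flatness of $\mathcal{X}(d)$ over $R(d)$ shows that this section is nowhere vanishing. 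Since the special fiber is reduced and smooth, we have $\omega_{\mathcal{X}(d)/R(d)}(\mathcal{X}(d)_{k,\red}-\mathcal{X}(d)_k)=\omega_{\mathcal{X}(d)/R(d)}$, so triviality of the latter shows $\mathcal{X}(d)$ is a smooth equivariant Kulikov model for $X$. The only mildly subtle point is this last verification of triviality of the relative canonical; everything else is bookkeeping about Galois-equivariance of canonical constructions.
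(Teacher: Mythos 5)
Your argument is correct and follows essentially the same route as the paper: you verify that the $\mu_d$-action extends to $\mathscr{A}(d)$, commutes with $\iota$ and preserves $(\mathscr{A}(d))[2]$, and then descends through the blow-up and quotient to $\mathscr{X}(d)$. (The paper observes the commutation by noting that the extended $\sigma$ is a group homomorphism, whereas you invoke uniqueness in the N\'eron mapping property; these are interchangeable.) Your additional cohomology-and-base-change verification that $\omega_{\mathscr{X}(d)/R(d)}$ is trivial is sound and makes explicit a point the paper leaves implicit behind the citation to Matsumoto's construction.
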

\begin{proof}
We keep the notation in (\ref{subsec:Kummer}). It suffices to check that the smooth model $ \cX (d) $ is Galois equivariant. To do this, let us fix a generator $\sigma$ of $\mu_d$. Since $\sigma$ acts as a homomorphism on $\cA (d)$, it is straightforward to check that it commutes with the multiplication by $2$ (and with the involution $\iota$), so that $ (\cA (d))[2] $ is invariant under the action of $\sigma$. From this it follows that $ \sigma $ extends uniquely to an automorphism $ \tilde{\sigma} $ of $ \widetilde{\cA (d)} $ under which the exceptional locus is invariant, and commuting with the $H$-action on $ \widetilde{\cA (d)} $. Thus, $ \tilde{\sigma} $ descends to an automorphism of $ \cX (d) $. Since the formation of blow-up, resp.~quotient by $H$, commutes with base change from $K$ to $K(d)$, it is clear that this $\mu_d$-action on $ \cX (d) $ restricts to the obvious action on $ X \times_K K(d) $.
\end{proof}


Combining Proposition \ref{prop:Kummer} and Corollary \ref{cor:MP} yields the following result.

\begin{cor}
Let $A$ be an abelian surface over $K$ with potential good reduction, and let $ X = \mathrm{Km}(A) $. Then $X$ satisfies the Monodromy Property.
\end{cor}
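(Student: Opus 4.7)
The plan is essentially to check that the hypotheses of Corollary \ref{cor:MP} are satisfied for $X=\mathrm{Km}(A)$, after which the conclusion is immediate. First I would verify the basic structural hypotheses: $X$ is a K3 surface, hence a geometrically connected, smooth, proper $K$-scheme with trivial canonical line bundle, so it lies in the class of varieties to which Corollary \ref{cor:MP} applies.

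Next I would invoke Proposition \ref{prop:Kummer} to produce, for a suitable integer $d>0$, a smooth proper $R(d)$-model $\cX(d)$ of $X\times_K K(d)$ on which the Galois action of $\mu_d$ extends. It remains to check that this smooth model is in fact an equivariant Kulikov model in the sense of the definition in Section \ref{sec:kulikov}: the model is regular, flat, proper, and its special fiber $\cX(d)_k\cong \mathrm{Km}(\cA(d)_k)$ is a single smooth reduced divisor, so it is a (trivial) divisor with normal crossings. Since $\cX(d)_{k,\red}=\cX(d)_k$, the logarithmic relative canonical line bundle reduces to the ordinary relative canonical line bundle $\omega_{\cX(d)/R(d)}$, whose restrictions to both the generic fiber (a K3 surface) and the special fiber (again a K3 surface) are trivial; triviality on the total space then follows from properness and standard rigidity, giving an equivariant Kulikov model.

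With these verifications in place, Corollary \ref{cor:MP} applies directly and yields the Monodromy Property for $X$. There is no serious obstacle here, since Proposition \ref{prop:Kummer} has already done the substantial work of exhibiting the equivariant model; the only minor point requiring attention is the triviality of $\omega_{\cX(d)/R(d)}$, which one may alternatively deduce from the fact that it is the descent to $\cX(d)$ of the trivial relative canonical bundle $\omega_{\widetilde{\cA(d)}/R(d)}$ under the quotient by the involution $\iota$, as in the construction recalled in \eqref{subsec:Kummer}.
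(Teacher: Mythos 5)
Your proof follows the paper's route exactly: combine Proposition \ref{prop:Kummer} with Corollary \ref{cor:MP}. Note, though, that Proposition \ref{prop:Kummer} already asserts that the model is a smooth \emph{equivariant Kulikov model}, so your intermediate verification of the Kulikov conditions re-derives content that is part of that proposition rather than adding a new step. One factual misstep in your closing aside: $\omega_{\widetilde{\cA(d)}/R(d)}$ is \emph{not} trivial --- the blow-up $\widetilde{\cA(d)}\to\cA(d)$ along the $2$-torsion (a smooth codimension-two locus) gives $\omega_{\widetilde{\cA(d)}/R(d)}\cong\pi^*\omega_{\cA(d)/R(d)}\otimes\mathcal{O}(E)\cong\mathcal{O}(E)$, where $E$ is the exceptional divisor. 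What is trivial is $\omega_{\cA(d)/R(d)}$, and the triviality of $\omega_{\cX(d)/R(d)}$ is recovered from the Hurwitz formula for the degree-two cover $\widetilde{\cA(d)}\to\cX(d)$, which is ramified precisely along $E$, together with $\iota$-invariance of the translation-invariant $2$-form. Your primary argument (triviality on both fibers plus properness and rigidity) is correct and makes the aside unnecessary, but the aside as stated is wrong.
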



\sss We would like to point out that even if $A$ does not have good reduction over $R$, it could still happen that $X$ has good reduction; this occurs precisely when $\mathrm{Gal}(K^a/K)$ acts on $H^1(A \times_K K^a, \mathbb{Q}_{\ell})$ by multiplication with $-1$. However, in this case, there exists a quadratic twist $B$ of $A$, with good reduction over $R$, and such that $ X = \mathrm{Km}(B) $ (cf.~e.g.~the proof of \cite[Thm.~4.1]{Mat}).

This means that, in order to get non-trivial examples, it suffices to assume that $ [K':K] > 2 $, where $K'$ denotes the minimal extension over which $A$ acquires good reduction. It is easy to find such examples. For instance, if $A$ is the Jacobian of a smooth, projective and geometrically connected curve $C/K$ of genus $2$, it suffices to assume that the stabilization index $e(C)$ is strictly greater than $2$ (see \cite{HaNi-book} for the definition of, and properties of, the invariant $e(C)$).

\sss We can formulate a similar result for Hilbert schemes of $n$ points on K3 and abelian surfaces. Recall that, for any integer $n \geq 1$, this construction yields a $2n$-dimensional smooth and proper variety with trivial canonical sheaf. In the K3 case, these varieties, and their deformations, form one of the main series of known examples of Irreducible Holomorphic Symplectic Varieties (IHSV). A similar statement is true when the underlying surface is abelian, after replacing the Hilbert scheme with its associated generalized Kummer variety.

\begin{prop}
Let $X$ be either a K3 surface or an abelian surface over $K$, and let $X^{[n]}$ denote the Hilbert scheme of $n$ points on $X$. We assume that:
\begin{enumerate}
\item the monodromy action on the cohomology of $X$ is non-trivial.

\item $X$ admits a smooth equivariant Kulikov model over $R(d)$, for some $d > 1$.
\end{enumerate}
Then properties $(1)$ and $(2)$ hold also for $X^{[n]}$.
\end{prop}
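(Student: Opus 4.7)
The plan is to handle the two claims in turn.

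For (2), let $\cY/R(d)$ denote the given smooth equivariant Kulikov model of $X$, and form the relative Hilbert scheme $\cY^{[n]}/R(d)$. By Fogarty's theorem (which extends straightforwardly to the algebraic-space setting), $\cY^{[n]}$ is a smooth proper algebraic space over $R(d)$ of relative dimension $2n$. The $\mu_d$-action on $\cY$ induces by functoriality a $\mu_d$-action on $\cY^{[n]}$, and the $\mu_d$-equivariant identification $\cY_{K(d)} \cong X \times_K K(d)$ yields a $\mu_d$-equivariant identification $\cY^{[n]}_{K(d)} \cong X^{[n]} \times_K K(d)$. To verify triviality of $\omega_{\cY^{[n]}/R(d)}$, I would invoke Beauville's construction in families: a symplectic form (K3 case) or a translation-invariant volume form (abelian case) trivializing $\omega_{\cY/R(d)}$ induces, by functoriality, a global trivializing section of $\omega_{\cY^{[n]}/R(d)}$. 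Canonicity of this construction ensures compatibility with the $\mu_d$-action.

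For (1), I will produce a $\sigma$-equivariant injection $H^*(X \times_K K^a, \Q) \hookrightarrow H^*(X^{[n]} \times_K K^a, \Q)$. Let $\pi\colon X^n \to X^n/S_n$ be the quotient by the symmetric group, and $\rho\colon X^{[n]} \to X^n/S_n$ the Hilbert-Chow morphism; both are defined over $K$, so all induced maps on cohomology are $\sigma$-equivariant. Since $X^n/S_n$ is a $\Q$-homology manifold (being a finite quotient of a smooth variety), K\"unneth and the identification $H^*(X^n/S_n, \Q) \cong H^*(X^n, \Q)^{S_n}$ yield a $\sigma$-equivariant isomorphism
\[
H^*(X^n/S_n \times_K K^a, \Q) \;\cong\; \bigl( H^*(X \times_K K^a, \Q)^{\otimes n} \bigr)^{S_n}.
\]
The linear map $v \mapsto \sum_{i=1}^n 1 \otimes \cdots \otimes v \otimes \cdots \otimes 1$, with $v$ in the $i$-th tensor slot and $1 \in H^0$ the fundamental class, is a $\sigma$-equivariant injection of $H^*(X \times_K K^a, \Q)$ into this symmetric power. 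Composing with $\rho^*$ produces the desired embedding: $\rho^*$ is injective because $\rho$ is a semismall birational morphism from a smooth variety to a $\Q$-homology manifold, so that the decomposition theorem places $\Q_{X^n/S_n}$ as a direct summand of $R\rho_* \Q_{X^{[n]}}$. A non-trivial monodromy action on $H^*(X \times_K K^a, \Q)$ thus propagates to a non-trivial monodromy action on $H^*(X^{[n]} \times_K K^a, \Q)$.

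The main obstacle I foresee lies in (2): establishing triviality of $\omega_{\cY^{[n]}/R(d)}$ in the relative, equivariant, algebraic-space setting. Fiberwise triviality of the canonical bundle of $X^{[n]}$ is classical (Beauville in the K3 case; local product with the generalized Kummer in the abelian case), but one must globalize the trivializing section over $R(d)$ in a manner compatible with the $\mu_d$-action. An alternative, more algebraic route is to exploit the crepancy of the relative Hilbert-Chow morphism $\cY^{[n]} \to \cY^n/S_n$: a trivialization of $\omega_{\cY^n/R(d)}$ descends to the regular locus of $\cY^n/S_n$ and extends to $\cY^{[n]}$ by normality and crepancy.
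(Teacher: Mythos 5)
Your proof follows essentially the same strategy as the paper's: for (2), form the relative Hilbert scheme $\cY^{[n]}$ of a smooth equivariant Kulikov model $\cY$ and use functoriality of the Hilbert functor for equivariance; for (1), embed $H^*(X)$ into $H^*(X^{[n]})$ equivariantly and deduce that nontrivial monodromy propagates. The difference is mainly one of granularity. In (1), the paper simply cites Beauville's Proposition~6 (that $H^2(\cY_k)$ is a $\mu_d$-equivariant direct summand of $H^2(\cY_k^{[n]})$), whereas you reprove the relevant injection from scratch via the Hilbert--Chow morphism, Künneth, and the decomposition theorem; your version works on all of $H^*$ rather than just $H^2$, but this extra generality is not needed and the argument is exactly the standard proof underlying the cited reference. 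In (2), you spend effort worrying about trivializing $\omega_{\cY^{[n]}/R(d)}$ in families and equivariantly, but this is automatic and requires no geometric input about symplectic forms or crepancy: $\cY^{[n]}$ is smooth and proper over $R(d)$ with irreducible special fiber, so the only vertical divisor is principal; hence the restriction map $\Pic(\cY^{[n]})\to \Pic(X^{[n]}\times_K K(d))$ is injective, and since $X^{[n]}$ is Calabi--Yau the relative canonical bundle is already trivial on the generic fiber, so it is trivial on $\cY^{[n]}$. (Also note the definition of equivariant Kulikov model requires only that the $\mu_d$-action extend to $\cY^{[n]}$, not that a choice of trivializing section be $\mu_d$-invariant.) So the main obstacle you flag at the end dissolves; the rest of the argument is sound and matches the paper.
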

\begin{proof}
By our assumptions, there exists a smooth equivariant Kulikov model $\cY$ of $X \times_K K(d)$ over $R(d)$. Then the relative Hilbert scheme $\cY^{[n]}$ is a smooth and proper $R(d)$-model of $X^{[n]} \times_K K(d)$. By standard functorial properties of the Hilbert functor, it is also Galois-equivariant.
  The $\mu_d$-representation $$H^2(\cY_k^{[n]}, \mathbb{Q})\cong H^2(X^{[n]}\times_K K^a,\Q)$$ contains $ H^2(\cY_k, \mathbb{Q}) $ as a direct factor \cite[Prop.~6]{Beauville}. Thus if the monodromy action on the cohomology of $X$ is non-trivial, the same is true for $X^{[n]}$.
\end{proof}

\begin{cor}
Let $X$ be an abelian surface over $K$ with potential good reduction or a K3 surface with a smooth equivariant Kulikov model over $R(d)$ for some $d>0$.
 Then $X^{[n]}$ satisfies the Monodromy Property for every $n>0$.
\end{cor}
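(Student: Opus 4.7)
The corollary is a direct consequence of Corollary \ref{cor:MP} combined with the preceding proposition: the plan is simply to produce a smooth equivariant Kulikov model for $X^{[n]}$ and apply Corollary \ref{cor:MP} to it. I do not need the hypothesis (1) of the preceding proposition, only the construction underlying its condition (2).

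First, I would show that in both cases $X$ itself admits a smooth equivariant Kulikov model over $R(d)$ for some $d>0$. In the K3 case this is exactly the hypothesis. In the abelian surface case with potential good reduction, I choose $d$ such that the N\'eron model $\cA(d)$ of $X\times_K K(d)$ is smooth and proper over $R(d)$. The N\'eron mapping property extends the Galois action of $\mu_d$ on $X\times_K K(d)$ to $\cA(d)$, and the smoothness of the (reduced, principal) special fiber together with the triviality of the canonical bundle of the abelian variety give that
$$\omega_{\cA(d)/R(d)}(\cA(d)_{k,\red}-\cA(d)_k)\cong \omega_{\cA(d)/R(d)}$$
is trivial. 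Hence $\cA(d)$ is a smooth equivariant Kulikov model of $X$.

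Now let $\cY$ denote the resulting smooth equivariant Kulikov model of $X$ over $R(d)$. By the argument in the proof of the preceding proposition concerning condition (2), the relative Hilbert scheme $\cY^{[n]}$ is smooth and proper over $R(d)$, its generic fiber is $\mu_d$-equivariantly identified with $X^{[n]}\times_K K(d)$ (the $\mu_d$-action coming from functoriality of the Hilbert functor), and its reduced principal special fiber is smooth. Combining this with the triviality of $\omega_{X^{[n]}}$ (a standard fact about Hilbert schemes of points on surfaces with trivial canonical bundle, via the crepant Hilbert--Chow morphism), one deduces that $\omega_{\cY^{[n]}/R(d)}(\cY^{[n]}_{k,\red}-\cY^{[n]}_k)$ is trivial. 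Thus $\cY^{[n]}$ is a smooth equivariant Kulikov model of $X^{[n]}$. Since $X^{[n]}$ is geometrically connected, smooth, and proper over $K$ with trivial canonical bundle, Corollary \ref{cor:MP} applies and yields the Monodromy Property.

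There is no genuine obstacle to this argument: all the substantive work has been packaged into Corollary \ref{cor:MP} and the preceding proposition. The only verifications needed are the extension of the Galois action via N\'eron in the abelian case and the triviality of the logarithmic relative canonical bundle on $\cY^{[n]}$, both of which are routine given what has already been proved.
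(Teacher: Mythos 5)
Your proof is correct and follows the paper's (implicit) intent: combine the preceding Proposition with Corollary~\ref{cor:MP}. You helpfully make explicit two things the paper leaves to the reader: that the N\'eron model of an abelian surface with potential good reduction is itself a smooth equivariant Kulikov model (so both hypotheses of the Corollary reduce to the input of the Proposition's condition~(2)), and that the triviality of $\omega_{\cY^{[n]}/R(d)}$ does need a word of justification (e.g.\ via the crepant Hilbert--Chow morphism together with the injectivity of $\Pic(\cY^{[n]})\to\Pic(\cY^{[n]}_{K(d)})$), a point the Proposition's proof glosses over.
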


\subsection{Equivariant deformations}\label{subsec-equidef}
\sss \label{sss:equiv} We will next discuss another class of Calabi-Yau variaties admitting smooth equivariant Kulikov models, which, loosely said, arise by reversing the procedure of semi-stable reduction.
 More precisely, let $\cY_0$ be a connected smooth projective $k$-variety, equipped with an automorphism $\sigma$ of
 finite order $d\geq 2$.  We identify $\sigma$ with the canonical generator $\exp(2\pi i/d)$ of $\mu_d$.
 Let $\cY$ be a smooth and projective $R(d)$-scheme equipped with a lift of the $\mu_d$-action on $\Spec R(d)$ and with a $\mu_d$-equivariant isomorphism
 between $\cY_k$ and $\cY_0$. Then the generic fiber $\cY_{K(d)}$ descends to a geometrically connected smooth projective $K$-scheme $X$ over $K$.
  If $\cY_{0}$ has trivial canonical line bundle, then the same holds for $X$. By construction, $X$ has an equivariant Kulikov model over $R(d)$.
  If $\sigma$ acts non-trivially on the cohomology of $\cY_0$, then the monodromy action on the cohomology of $X$ is non-trivial, so that $X$ has no smooth Kulikov model over $R$.
  The condition that $\sigma$ acts non-trivially on the cohomology of $\cY_0$ is automatically satisfied when $\cY_0$ is a K3 surface or an abelian variety, because in those cases the automorphism group of $\cY_0$ acts faithfully on the cohomology ring of $\cY_0$.

\sss \label{subsec-moduli} Given $\cY_0$, it is straightforward to construct schemes $\cY$ satisfying the above properties: we can simply take
 $\cY=\cY_0\times_{k}R(d)$ endowed with the
 diagonal $\mu_d(k)$-action.  We can also construct more interesting (non-isotrivial) examples using moduli theory.
 Consider a Deligne-Mumford stack $ \mathcal{M} $ which is smooth, separated and of finite type over $k$. Let $x \in \mathcal{M}(k)$ be a point with (finite) stabilizer group $ G_x $. Then we can find a smooth affine $k$-scheme $ U = \mathrm{Spec}~A $ and an \'etale morphism
$$ \phi \colon ([U/G_x],w) \to (\mathcal{M},x) $$
such that $\phi$ induces an isomorphism of stabilizer groups at $w$ (cf.~e.g.~\cite[Thm.~1.1]{AHR}).

\begin{lemma}\label{lemma-equidef}
Assume that $G_x$ is cyclic. Then there exists a stacky curve $[C/G_x]$ in $[U/G_x]$ passing through $w$ and smooth in a neighbourhood of $w$.
\end{lemma}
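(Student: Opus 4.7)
The plan is to linearize the $G_x$-action near the fixed point $w$ and then pull back a one-dimensional character subrepresentation of the tangent space $T_wU$ to obtain $C$. First I would observe that, since $w$ is $G_x$-fixed, the surjection $\mathfrak{m}_w \twoheadrightarrow \mathfrak{m}_w/\mathfrak{m}_w^2 = T_w^*U$ is $G_x$-equivariant. Because $\mathrm{char}(k)=0$ and $G_x$ is finite, the averaging operator $|G_x|^{-1}\sum_{g\in G_x}g$ turns an arbitrary $k$-linear section into a $G_x$-equivariant $k$-linear section $s\colon T_w^*U\to\mathfrak{m}_w\subset A$. Extending $s$ multiplicatively gives a $G_x$-equivariant $k$-algebra map $\mathrm{Sym}(T_w^*U)\to A$, i.e.~a $G_x$-equivariant morphism $\varphi\colon U\to T_wU$ with $\varphi(w)=0$ whose induced map on tangent spaces at $w$ is the identity. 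Hence $\varphi$ is \'etale in some Zariski neighborhood $U'$ of $w$; intersecting $U'$ with its finitely many $G_x$-translates, we may take $U'$ to be $G_x$-invariant.

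Next I would use the cyclicity of $G_x$. Since $G_x$ is cyclic and $k=\mathbb{C}$, every irreducible complex representation of $G_x$ is one-dimensional, and the representation $T_wU$ decomposes as a direct sum of characters $T_wU=\bigoplus_{i=1}^{n}L_i$. Pick any $L_i$; it is a smooth, $G_x$-stable, one-dimensional closed subscheme of $T_wU$ passing through $0$. Let $C\subset U$ be the scheme-theoretic closure of $(\varphi|_{U'})^{-1}(L_i)\subset U'$. Since $L_i$ is $G_x$-stable and $\varphi$ is $G_x$-equivariant, $(\varphi|_{U'})^{-1}(L_i)$ is $G_x$-stable, and hence so is its closure $C$. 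Since $\varphi|_{U'}$ is \'etale and $L_i\subset T_wU$ is smooth of dimension one, the preimage $C\cap U'$ is smooth of dimension one, so $C$ is smooth and one-dimensional in a Zariski neighborhood of $w$. Consequently $[C/G_x]\subset[U/G_x]$ is a stacky curve passing through $w$ and smooth near $w$.

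The key technical content is concentrated in two small points: (i) the existence of the $G_x$-equivariant linearization $\varphi$, which relies on $|G_x|$ being invertible in $k$, and (ii) the existence of a one-dimensional $G_x$-stable subspace of $T_wU$, which is where the cyclicity of $G_x$ is used (for a non-cyclic finite group the tangent representation could fail to contain a character, and one would have to replace the linear slice $L_i$ by a higher-dimensional irreducible summand and take a $G_x$-invariant line inside it, which is not always possible). Both are standard inputs; the remainder of the argument reduces to the fact that the preimage of a smooth subscheme under an \'etale morphism is smooth and that $G_x$-equivariance of a closed subscheme of $U$ descends to a closed substack of $[U/G_x]$.
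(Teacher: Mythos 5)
Your argument and the paper's are essentially the same: linearize the $G_x$-action near $w$ (the paper lifts the equivariant splitting $\mathfrak{m}/\mathfrak{m}^2\cong\oplus L_i$ to an equivariant map $\psi\colon\oplus L_i\to\mathfrak{m}$ using linear reductivity, which is your averaged section $s$ in slightly different clothing), decompose the cotangent space into characters using that $G_x$ is cyclic and $\mathrm{char}\,k=0$, and cut out a line by the corresponding ideal; the paper's $C=Z(\psi(\oplus_{i<n}L_i))$ is literally your $\varphi^{-1}(L_n)$. The one substantive difference: the paper re-indexes so that the surviving character $L_n$ is \emph{non-trivial}, whereas you take ``any $L_i$''. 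That choice matters for everything downstream of the lemma, where one wants the residual $G_x\cong\mu_p$-action on $C$ near $w$ (equivalently, on $\widehat{\mathcal{O}}_{C,w}\cong R(p)$) to be the genuine Galois action, so that the resulting morphism $[C/G_x]\to\mathcal{M}$ yields a non-isotrivial equivariant family in Proposition \ref{prop:equiv-def}; if $L_i$ were the trivial character you would get a trivial $\mu_p$-action on the base. Since $G_x$ acts faithfully on $U$ near the fixed point $w$ and $k$ has characteristic $0$, Cartan's lemma guarantees that $G_x$ acts non-trivially on $T_wU$, so a non-trivial $L_i$ always exists — you should simply add that you pick such an $L_i$. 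With that one-line fix your proof matches the paper's.
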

\begin{proof}
Let $ \mathfrak{m} \subset A $ be the maximal ideal corresponding to $w$. By our assumptions on $G_x$, we can find an isomorphism of $G_x$-representations
$$ \phi \colon \oplus_{i=1}^n L_i \to \mathfrak{m}/\mathfrak{m}^2, $$
where each $L_i$ is an irreducible $1$-dimensional $G_x$-representation. After re-indexing, we can assume that $L_n$ is non-trivial as a representation. We can find an invariant finite dimensional subspace $ W $ in $ \mathfrak{m} $ surjecting onto $ \mathfrak{m}/\mathfrak{m}^2 $. Let $W'$ be the kernel of this map. Since $G_x$ is linearly reductive, there exists an invariant complement $V$ to $W'$ such that the induced $ V \to \mathfrak{m}/\mathfrak{m}^2 $ is an equivariant isomorphism. Hence, we can lift $\phi$ to a homomorphism
$$ \psi \colon \oplus_{i=1}^n L_i \to \mathfrak{m}. $$
The image $\psi(\oplus_{i=1}^{n-1} L_i) $ is an invariant ideal $I$ contained in $\mathfrak{m}$, and we can take $C = Z(I)$.
\end{proof}

\sss We remark that if the order of $G_x$ is prime, one can even assume that $L_n$ is a faithful representation. Also, if $G_x$ is not cyclic, we can instead take any cyclic subgroup $ H \subset G_x $, and obtain a morphism $ [U/H] \to [U/G_x] $. Applying Lemma \ref{lemma-equidef} again, we get
$$ [C/H] \to \mathcal{M}. $$
 Recall (see for instance the discussion in \cite[4.3]{Alper}) that if $ \mathcal{M} $ is representing a reasonable moduli functor, then, a morphism $ [C/H] \to \mathcal{M} $ is equivalent to a family $ f \colon \mathscr{Y} \to C $ in $\mathcal{M}(C)$, where $H$ acts on $\mathscr{Y}$ and $f$ is equivariant. In other words, by base change to the localization and completion at the fixed point $w \in C $, we get the model we are after.


\sss We will now apply the above results to K3 surfaces. For any integer $n > 0$, we denote by $\mathcal{M}_{2n}$ the moduli space parametrizing pairs $(Y,L)$ where $Y$ is a K3 surface and $L$ is a polarization of degree $2n$. By \cite{Riz}, $\mathcal{M}_{2n}$ is a Deligne-Mumford stack which is separated and of finite type over $\mathrm{Spec}~\mathbb{Z} $. It is, moreover, smooth over $\mathrm{Spec}~\mathbb{Z}[\frac{1}{2n}] $.

\begin{prop}\label{prop:equiv-def}
Let $\cY_0$ be a polarized K3 surface over $k$, and let $ G \cong \mu_p $ be a cyclic subgroup of $\mathrm{Aut}(\cY_0) $ fixing the polarization, with $p$ a prime. Then we can find a K3 surface $X$ over $ K = k\llpar t\rrpar $ such that the following hold:
\begin{enumerate}
\item $X$ admits a smooth equivariant Kulikov model $ \cY $ over $R(p)$.

\item There exists a $\mu_p$-equivariant isomorphism $ \cY \times_{R(p)} k \cong \cY_0$.

\item $X$ does not admit a smooth Kulikov model over $R$.

\item $X$ is not isotrivial; that is, $X\times_K K^a$ is not defined over $k$.
\end{enumerate}
\end{prop}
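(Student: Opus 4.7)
The plan is to apply the general construction of \ref{subsec-moduli} to the moduli stack $\mathcal{M}=\mathcal{M}_{2n}$ of polarized K3 surfaces of some degree $2n$ supported by $\cY_0$. Let $x=(\cY_0,L_0)\in\mathcal{M}(k)$; the stabilizer $G_x=\mathrm{Aut}(\cY_0,L_0)$ is finite and contains $G\cong\mu_p$. Choose an \'etale morphism $([U/G_x],w)\to(\mathcal{M},x)$ inducing an isomorphism on stabilizers, and pass to $[U/G]$ via the remark following Lemma \ref{lemma-equidef}.

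The crucial preliminary step is to verify that $G$ acts with a non-trivial character on the tangent space $T_wU\cong T_x\mathcal{M}$. By Kodaira--Spencer,
\[T_x\mathcal{M}\cong\ker\!\left(H^1(\cY_0,T_{\cY_0})\xrightarrow{\smile c_1(L_0)}H^2(\cY_0,\mathcal{O}_{\cY_0})\right),\]
and the K3 identification $T_{\cY_0}\cong\Omega^1_{\cY_0}\otimes\omega_{\cY_0}^{-1}$ provides a $G$-equivariant isomorphism $T_x\mathcal{M}\cong(c_1(L_0)^\perp\cap H^{1,1}(\cY_0))\otimes\chi^{-1}$, where $\chi$ is the character by which $G$ acts on $H^{2,0}(\cY_0)$. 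In the non-symplectic case, $\chi$ is itself faithful and supplies the required non-trivial summand. In the symplectic case ($\chi=1$), faithfulness of $\mathrm{Aut}(\cY_0)$ on $H^2(\cY_0,\Z)$ (a consequence of the Torelli theorem) combined with $G$-invariance of $\C\cdot c_1(L_0)$ forces $G$ to act non-trivially on $c_1(L_0)^\perp\cap H^{1,1}$. Since $p$ is prime, every non-trivial $G$-character is faithful, so Lemma \ref{lemma-equidef} (via the prime-order improvement) yields a smooth curve $C\subset U$ through $w$ whose tangent at $w$ is a faithful $G$-character.

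By the discussion in \ref{subsec-moduli}, the stacky curve $[C/G]\to\mathcal{M}$ corresponds to a $G$-equivariant family of polarized K3 surfaces $\cY_C\to C$ with equivariant identification of the fiber over $w$ with $(\cY_0,L_0)$. Since $G$ acts on a uniformizer of $C$ at $w$ via a faithful character, the formal completion of $C$ at $w$ is $\mu_p$-equivariantly identified with $R(p)$. Let $\cY$ be the resulting smooth proper $\mu_p$-equivariant $R(p)$-scheme and define $X$ as the Galois descent of $\cY_{K(p)}$ to $K$. Properties (1) and (2) follow immediately from the construction; for triviality of $\omega_{\cY/R(p)}$ use that $H^1(\cY_0,\mathcal{O})=0$ implies line bundles on the smooth proper $R(p)$-family $\cY$ are detected by the special fiber.

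For (3), the monodromy on $H^2(X\times_KK^a,\Q)$ coincides with the Galois action of $\mu_p$ on $H^2(\cY_k,\Q)=H^2(\cY_0,\Q)$, which is the given non-trivial $G$-action, so no smooth $R$-model of $X$ can exist. For (4), suppose for contradiction that $\cY_{K^a}\cong Y\times_kK^a$ for some K3 $Y$ over $k$; then $\cY_{K(p)}$ is a $K(p)$-form of $Y\times_k K(p)$ having good reduction over $R(p)$, hence classified by $H^1_{\mathrm{\acute{e}t}}(\Spec R(p),\underline{\mathrm{Aut}}(Y,L_Y))$. This group vanishes because $R(p)$ is strictly henselian and $\mathrm{Aut}(Y,L_Y)$ is finite \'etale over $k$; so $\cY\cong Y\times_k R(p)$ as $R(p)$-families, forcing the Kodaira--Spencer class of $\cY$ at the special fiber to vanish, in contradiction with the non-zero tangent direction chosen above. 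The main obstacle is the preliminary tangent-action computation: in the non-symplectic case $G$ may act trivially on $H^{1,1}(\cY_0)$, and non-triviality of the action on $T_x\mathcal{M}_{2n}$ is rescued only by the character twist $\chi^{-1}$ arising from the K3 identification $T_{\cY_0}\cong\Omega^1_{\cY_0}\otimes\omega_{\cY_0}^{-1}$.
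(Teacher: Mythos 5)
Your proposal follows the same overall strategy as the paper: apply Lemma \ref{lemma-equidef} (via the remark on cyclic subgroups) to the moduli stack $\mathcal{M}_{2n}$, produce a $\mu_p$-equivariant deformation of $\cY_0$ over $R(p)$ whose classifying morphism is non-constant, and descend the generic fiber to $K$; (1) and (2) are then built in, (3) follows from the discussion in \eqref{sss:equiv}, and (4) from non-constancy. Where you go further than the paper is the tangent-space computation. The proof of Lemma \ref{lemma-equidef} silently assumes that some $L_i$ in the decomposition of $\mathfrak{m}/\mathfrak{m}^2$ is non-trivial, and the remark's prime-order improvement needs the given $G\cong\mu_p$ (not merely the full stabilizer $G_x$) to act non-trivially on $T_x\mathcal{M}_{2n}$. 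Your Kodaira--Spencer argument, using $T_{\cY_0}\cong\Omega^1_{\cY_0}\otimes\omega_{\cY_0}^{-1}$ to identify $T_x\mathcal{M}\cong(c_1(L_0)^\perp\cap H^{1,1})\otimes\chi^{-1}$ and then splitting into the symplectic and non-symplectic cases, is a correct and welcome filling of that gap (the paper instead just cites faithfulness of $\mathrm{Aut}(\cY_0)$ on cohomology, which implicitly covers it but only after the linearization-near-$w$ argument is spelled out).

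For (4) you diverge from the paper's route and here there are small lacunae. The paper first notes that the \emph{polarized} K3 $(X,L)\times_K K^a$ cannot be defined over $k$ since the classifying map $\Spec R(p)\to\mathcal{M}_{2n}$ is non-constant, and then deduces the unpolarized statement from the fact that $\mathrm{Pic}(S)\to\mathrm{Pic}(S\times_k K^a)$ is an isomorphism for any K3 $S/k$. Your argument instead descends the hypothetical isomorphism $\cY_{K^a}\cong Y\times_k K^a$ to $K(p)$ using $H^1_{\mathrm{\acute{e}t}}(\Spec R(p),\underline{\mathrm{Aut}}(Y,L_Y))=0$. Two points need attention. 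First, the given isomorphism over $K^a$ is unpolarized; to speak of forms of $(Y,L_Y)$ you must first match the polarizations, which is exactly what the paper's $\mathrm{Pic}$ observation accomplishes — so you would need that step anyway. Second, you assert that a form of $Y\times_kK(p)$ with good reduction is classified by an $H^1$ over $\Spec R(p)$; this presupposes that the form spreads out to a torsor of $\underline{\mathrm{Aut}}(Y,L_Y)$ over $\Spec R(p)$, which amounts to the uniqueness of smooth proper polarized $R(p)$-models. That uniqueness is true (by separatedness of $\mathcal{M}_{2n}$ and Matsusaka--Mumford), but it should be invoked explicitly. With those two patches your argument goes through; as written, the paper's argument is shorter and cleaner, and your Kodaira--Spencer contradiction at the end is consistent with, but not strictly necessary beyond, the non-constancy already established.
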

\begin{proof}
By Lemma \ref{lemma-equidef}, we can find a $\mu_p$-equivariant polarized deformation $ f \colon \cY \to \mathrm{Spec} R(p) $ of $\cY_0$ such that
the associated morphism to the moduli stack of polarized K3 surfaces is not constant.  The generic fiber $ Y = \cY \times_{R(p)} K(p)$ descends to a polarized K3 surface $X$ over $K$ that admits a smooth equivariant Kulikov model over $R(p)$, but not over $R$ (see the general discussion in \eqref{sss:equiv}).
 By construction, the base change to $K^a$ of the polarized K3 surface $X$ is not defined over $k$. But then the surface $X\times_K K^a$ itself is not defined over $k$, either:
 the morphism $\mathrm{Pic}(S)\to \mathrm{Pic}(S\times_k K^a)$ is an isomorphism for every $K3$ surface $S$ over $k$, since $\mathrm{Pic}(S)$ is a constant group scheme over $k$.
\end{proof}



\begin{remark}
It is reasonable to expect that similar results  hold for polarized IHSV-s of K3 type. Indeed, for any such variety $V$ over a field $F$ of characteristic $0$, the representation of $\mathrm{Aut}(V)$ on $H^2(V \times_F F^a,\mathbb{Z}_{\ell})$ is faithful (cf.~e.g.~\cite[Lem.~5]{HaTs}). Moreover, coarse moduli spaces can be be constructed, and share most of the good properties that moduli spaces of polarized K3 surfaces enjoy (cf.~e.g.~\cite[Thm.~3.10]{GHS}), though precise statements for moduli stacks do not seem to have appeared in the literature.
\end{remark}

\section{Appendix: motivic integration on algebraic spaces}\label{sec:appendix}
The aim of this section is to prove that one can use models in the category of algebraic spaces to compute motivic integrals.
 Specifically, we will extend the computation of motivic zeta functions on log smooth models from \cite{BuNi} to algebraic spaces; this is required for the proof of Theorem \ref{thm:main}. On the way, we answer a question raised by Stewart and Vologodsky in \cite[A.4(b)]{StVo-arxiv}. In principle, one can go through the entire theory of motivic integration on schemes over discrete valuation rings and check that all the statements remain valid for algebraic spaces. Here, we will use a shortcut instead, passing through the category of formal schemes.

\subsection{Weak N\'eron models and motivic integrals}
\sss Let $R$ be a complete discrete valuation ring with quotient field $K$ and perfect residue field $k$; we do not require $k$ to have characteristic zero.
 Let $X$ be a connected smooth and proper algebraic space over $K$. We define a {\em weak N\'eron model} for $X$ to be a separated smooth algebraic space $\cU$ over $R$, endowed with an isomorphism $\cU_K\to X$, such that for every finite unramified extension $R'$ of $R$ with quotient field $K'$, the map $\cU(R')\to X(K')$ is bijective. For our purposes, we will only need the case where $X$ itself is a scheme; in general, the existence of weak N\'eron models can be proven in exactly the same way as for schemes, by applying the smoothening algorithm in the proof of \cite[3.4.2]{BLR} to a compactification of $\cU$ over $R$ (see \cite{CLO} for an extension of Nagata's embedding theorem to algebraic spaces). The smoothening algorithm is functorial with respect to \'etale morphisms and carries over to algebraic spaces without difficulties.

\sss Let $\omega$ be a volume form on $X$. For every weak N\'eron model $\cU$ of $X$ and every connected component $C$ of $\cU_k$, we can define the order $\ord_C\omega$ of $\omega$ along $C$ in the same way as for schemes: it is the unique integer $m$ such that $\pi^{-m}\omega$ extends to a generator of $\omega_{\cU/R}$ at the generic point of $C$, where $\pi$ is a uniformizer in $R$.

\sss If $Y$ is an algebraic space of finite type over $k$, then $Y$ has a dense open subspace that is a scheme of finite type over $k$ \cite[II.6.8]{knutson}. Thus, by Noetherian induction, we can
partition $Y$ into finitely many subschemes of finite type over $k$. The sum of the classes of these subschemes in the Grothendieck ring of $k$-varieties $K_0(\Var_k)$ does not depend on the chosen partition, so that we can take this sum as the definition of the class $[Y]$ in $K_0(\Var_k)$.
 If $R$ has equal characteristic, we denote by $\mathcal{M}_k$ the localized Grothendieck ring of $k$-varieties $K_0(\Var_k)[\LL^{-1}]$. If $R$ has mixed characteristic, then $\mathcal{M}_k$ will denote the {\em modified} localized Grothendieck ring from \cite[\S3.8]{NiSe-K0}, obtained by trivializing all universal homeomorphisms.
 The key result in this appendix is the following proposition.

\begin{prop}\label{prop:algspint}
Let $X$ be a connected smooth and proper algebraic space over $K$, let $\omega$ be a volume form $X$, and let $\cU$ be a weak N\'eron model for $X$. Then the element
\begin{equation} \label{eq:motint2}
\sum_{C\in \pi_0(\cU_k)}[C]\LL^{-\ord_C\omega}
\end{equation}
 of $\mathcal{M}_k$ only depends on $X$ and $\omega$, and not on the choice of the weak N\'eron model of $X$. In particular, if $X$ is a scheme, then
 $$\int_X|\omega|=\sum_{C\in \pi_0(\cU_k)}[C]\LL^{-\ord_C\omega}.$$
\end{prop}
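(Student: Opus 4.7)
My plan is to reduce the problem from algebraic spaces to smooth affine $R$-schemes by means of a Nisnevich cover, and then invoke the classical theory of motivic integration on smooth stft formal $R$-schemes applied to the formal completions.

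First, as in the scheme setting, any two weak N\'eron models $\cU_1,\cU_2$ of $X$ over $R$ can be dominated by a third: apply the smoothening algorithm of \cite[3.4.2]{BLR} to the schematic image of the diagonal $X\to \cU_1\times_R\cU_2$. Since this algorithm is a sequence of blow-ups with centers in the special fiber and is formulated \'etale-locally, it carries over without difficulty to the category of algebraic spaces. It therefore suffices to compare the sum \eqref{eq:motint2} for two weak N\'eron models $\cU$ and $\cV$ connected by a morphism $h\colon \cV\to \cU$ over $R$ that is an isomorphism on generic fibers.

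Next, using Proposition \ref{prop:nisnevich} with trivial group action, I would choose Nisnevich covers $\psi\colon P\to \cU$ and $\varphi\colon Q\to \cV$ with $P$ and $Q$ smooth affine $R$-schemes, refining $\varphi$ so that $Q$ dominates $P\times_\cU \cV$. The sum \eqref{eq:motint2} on $\cU$ can be rewritten as $\sum_{n\in \Z}[f_\omega^{-1}(n)]\LL^{-n}$, where $f_\omega\colon \cU_k\to \Z$ is the locally constant function sending a point to the order of $\omega$ along its connected component; this is manifestly additive under any locally closed partition of $\cU_k$. Combining the fact that every Noetherian algebraic space has a dense open subscheme with the Nisnevich lifting property of $\psi_k$, a Noetherian induction produces a locally closed partition $\cU_k=\bigsqcup_\alpha V_\alpha$ together with sections $s_\alpha\colon V_\alpha\hookrightarrow P_k$ of $\psi_k$. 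A parallel partition of $P_k$ subordinate to these sections identifies the sum on $\cU$ with the sum on $P$ (in the scheme sense), and the same procedure for $\varphi$ identifies the sum on $\cV$ with the sum on $Q$. Since $Q\to P$ is a morphism of smooth affine $R$-schemes that is a weak N\'eron model morphism for the common generic fiber, the classical change of variables formula on the formal completions $\widehat{P}$ and $\widehat{Q}$ equates the two scheme-level sums. The ``in particular'' assertion follows by choosing $\cU$ to be a scheme-theoretic weak N\'eron model of the scheme $X$, on which \eqref{eq:motint2} coincides with $\int_X|\omega|$ by the classical theory.

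The main obstacle I expect lies in the additivity step: arranging the locally closed partitions on $\cU_k$ and $P_k$ so that all strata of $P_k$ not lying in the image of any section still combine without double-counting, and matching the corresponding strata on $\cV_k$ and $Q_k$ along the morphism $Q\to P$. A slicker alternative is to first define the motivic integral on any smooth separated formal algebraic space directly, using jet spaces $\cU(R/(\pi^{n+1}))$ (which are algebraic spaces of finite type over $k$ and hence have well-defined classes in $\mathcal{M}_k$), and show that it computes \eqref{eq:motint2}; invariance is then automatic, and the identification with the classical integral on a scheme reduces to a single verification on any Nisnevich chart.
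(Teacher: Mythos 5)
There is a genuine gap in your main argument, in the step where you write that ``$Q\to P$ is a morphism of smooth affine $R$-schemes that is a weak N\'eron model morphism for the common generic fiber, the classical change of variables formula on the formal completions $\widehat{P}$ and $\widehat{Q}$ equates the two scheme-level sums.'' The Nisnevich charts $P\to\cU$ and $Q\to\cV$ are genuinely \'etale (not open immersions), so $P_K$ and $Q_K$ are \'etale $X$-schemes, typically not isomorphic to $X$ or to each other; $P$ and $Q$ are weak N\'eron models of nothing in sight, and the quantity $\sum_{C\in\pi_0(P_k)}[C]\,\LL^{-\ord_C\omega}$ has no reason to coincide with $\sum_{C\in\pi_0(\cU_k)}[C]\,\LL^{-\ord_C\omega}$, since $P_k$ is genuinely larger than $\cU_k$. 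The sections $s_\alpha\colon V_\alpha\hookrightarrow P_k$ let you \emph{read off} the orders $\ord_{C}\omega$ on $\cU_k$ from $P$, but they do not turn $P$ and $Q$ into weak N\'eron models whose integrals you can compare by change of variables, and the morphism $Q\to P$ need not carry the images of your chosen sections on the $Q$-side to the images of your chosen sections on the $P$-side. So the crucial invariance under a morphism $h\colon\cV\to\cU$ of weak N\'eron models of $X$ is never actually established.

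The paper avoids re-proving invariance altogether by importing it from the rigid analytic theory: take the Berkovich/rigid analytification $X^{\rig}$ (available for algebraic spaces by Conrad--Temkin), where the motivic integral $\int_{X^{\rig}}|\omega|$ is already defined and known to be independent of all choices. Then, starting from a weak N\'eron model $\cU$ in algebraic spaces, one manufactures a \emph{formal} weak N\'eron model of $X^{\rig}$ by partitioning $\cU_k$ into smooth connected locally closed subschemes $U_i$, taking the formal completion $\mU_i$ of $\cU$ along $U_i$ (a formal \emph{scheme} by Knutson V.2.5, even though $\cU$ need not be a scheme), and then dilatating each $\mU_i$ to a smooth stft formal scheme $\mV_i$. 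An explicit codimension bookkeeping matches the formal sum over $\coprod_i\mV_i$ with the algebraic sum \eqref{eq:motint2}. Your ``slicker alternative'' of developing motivic integration directly on smooth separated formal algebraic spaces via jet spaces $\cU(R/(\pi^{n+1}))$ is a plausible alternative route, but it is a different and substantial program (jet spaces and the change of variables formula for algebraic spaces), not a patch on the Nisnevich-chart argument; the paper's trick of transiting through the existing rigid theory is precisely what spares one this work.
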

\begin{proof}
By \cite[4.2.1]{conrad-temkin}, we can consider the analytification $X^{\rig}$ in the category of rigid analytic $K$-varieties. This is a smooth and proper rigid $K$-variety, by \cite[2.3.1]{conrad-temkin}. The volume form $\omega$ on $X$ induces a volume form on $X^{\rig}$ that we will still denote by $\omega$. We claim that the expression \eqref{eq:motint2} is equal to the motivic integral
\begin{equation}\label{eq:motint3}\LL^{\dim(X)}\int_{X^{\rig}}|\omega|
\end{equation}
defined in \cite[4.1.2]{motrigid} -- see also \cite[p.266]{NiSe-motint} for a corrigendum. This implies, in particular, that it does not depend on the choice of $\cU$.

 So let us prove our claim. In order to compute the motivic integral \eqref{eq:motint3}, we construct a formal weak N\'eron model $\mV$ for $X^{\rig}$ in the sense of \cite{formner}. This is a smooth formal $R$-scheme of finite type, endowed with an open immersion of rigid $K$-varieties $\mV_\eta\to X^{\rig}$ that is bijective on $K'$-points for every finite unramified extension of $K$. Then for every connected component $C$ of $\mV_k$, one can define the order $\ord_C\omega$ in exactly the same way as before. By \cite[4.3.1]{motrigid}, we have
 $$\int_{X^{\rig}}|\omega|=\LL^{-\dim(X)}\sum_{C\in \pi_0(\mV_k)}[C]\LL^{-\ord_C\omega}$$ in $\mathcal{M}_k$. The factor $\LL^{-\dim(X)}$ comes from a different choice of normalization of the motivic measure than the one we have made in Section \ref{ss:motint}; the assumption in \cite[4.3.1]{motrigid} that $\mV$ is contained in a formal $R$-model of $X^{\rig}$ is redundant, by \cite[2.43]{NiSe-motint}.

By \cite[II.6.8]{knutson} and the assumption that $k$ is perfect, we can find a partition of $\cU_k$ into finitely many connected $k$-smooth subschemes $U_1,\ldots,U_r$.
 If we denote by $\mU_i$ the formal completion of the algebraic space $\cU$ along $U_i$, then $\mU_i$ is a formal scheme, by \cite[V.2.5]{knutson}.
  The reduction of $\mU_i$ (the closed subscheme defined by the largest ideal of definition $J_{\mU_i}$) is precisely $U_i$; thus it is of finite type over $k$, and $\mU_i$ is a smooth special formal $R$-scheme in Berkovich's terminology used in \cite{Ni,NiSe-motint} (special formal $R$-schemes are also called {\em formally of finite type} in the literature).

  For every $i$ in $\{1,\ldots,r\}$, we denote by $\mV_i\to \mU_i$ the {\em dilatation} centered at $U_i$. This means that $\mV_i$ is the maximal open formal subscheme of the blow-up of $\mU_i$ at $U_i$ such that the ideal $J_{\mU_i}\mathcal{O}_{\mV_i}$ is generated by a uniformizer in $R$. The dilatation satisfies a universal property that guarantees, in particular, that the map $\mV_i(R')\to \mU_i(R')$ is bijective for every finite unramified extension $R'$ of $R$ \cite[2.22]{Ni}. Moreover, $\mV_i$ is a smooth separated
  formal $R$-scheme of finite type, because $\mU_i$ and $U_i$ are smooth (see the proof of \cite[4.15]{Ni}). It follows that the disjoint union $\mV$ of the formal $R$-schemes $\mV_i$ is a weak N\'eron model of $X^{\rig}$. If we denote by $c_i$ the codimension of $U_i$ in $\cU_k$, then it is easy to check that $[(\mV_i)_k]=[U_i]\LL^{c_i}$ in $K_0(\Var_k)$ (see again the proof of \cite[4.15]{Ni}). Moreover, if we write $C_i$ for the unique connected component of $\cU_k$ containing $U_i$, then a straightforward computation also shows that $\ord_{(\mV_i)_k}\omega=\ord_{C_i}\omega+c_i$. Hence,
  $$\LL^{\dim(X)}\int_{X^{\rig}}|\omega|=\sum_{D\in \pi_0(\mV_k)}[D]\LL^{-\ord_D\omega}=\sum_{C\in \pi_0(\cU_k)}[C]\LL^{-\ord_C\omega}$$ in $\mathcal{M}_k$, by the scissor relations in the Grothendieck ring.
\end{proof}

 Thus, when $X$ is a scheme, we can use weak N\'eron models in the category of algebraic spaces to compute motivic integrals of volume forms on $X$. This answers the question raised in \cite[A.4(b)]{StVo-arxiv}.

\subsection{Motivic zeta functions and Nisnevich covers}
\sss From now on, we assume that $k$ is an algebraically closed field of characteristic zero, and we fix an isomorphism $R\cong k[[\pi]]$.
 Let $X$ be a smooth and proper $K$-scheme with trivial canonical line bundle, and let $\omega$ be a volume form on $X$.
 Let $\cX$ be a proper algebraic space over $R$ endowed with an isomorphism of $K$-schemes $\cX_K\to X$. By Proposition \ref{prop:nisnevich}, we can find a partition of $\cX_k$ into subschemes $U_1,\ldots,U_r$
  and, for each $j$ in $\{1,\ldots,r\}$, an \'etale morphism of finite type $\cU_j\to \cX$ such that $\cU_j$ is a scheme and $\cU_j\times_{\cX}U_j\to U_j$ is an isomorphism.
  For each $j$, we consider the motivic zeta function $Z^{\widehat{\mu}}_{\cU_j,\omega}(T)$ as defined in \cite[\S6.2]{BuNi}; here we abuse notation by writing $\omega$ for the restriction of $\omega$ to the generic fiber of $\cU_j$. This zeta function is a formal power series in $T$ with coefficients in the localized Grothendieck ring $\mathcal{M}^{\widehat{\mu}}_{(\cU_j)_k}$ of varieties over $(\cU_j)_k$ with good $\widehat{\mu}$-action. We define the generating series $Z_{j}(T)$ by first applying the base change morphism $$\mathcal{M}^{\widehat{\mu}}_{(\cU_{j})_k}\to \mathcal{M}^{\widehat{\mu}}_{U_j}$$ to the coefficients of $Z^{\widehat{\mu}}_{\cU_j,\omega}(T)$, and then the forgetful morphism $$\mathcal{M}^{\widehat{\mu}}_{U_j}\to \mathcal{M}^{\widehat{\mu}}_{k}.$$

\begin{prop}\label{prop:add}
We have $$Z_{X,\omega}(T)=Z_1(T)+\ldots +Z_r(T)$$ in $\mathcal{M}^{\widehat{\mu}}_{k}$.
\end{prop}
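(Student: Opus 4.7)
The plan is to compute $\int_{X(d)}|\omega(d)|$ for each $d>0$ directly on an equivariant weak N\'eron model of $X(d)$ in the algebraic-space category, assembled from the Nisnevich charts $\cU_j$, and to match the contribution of each chart with the coefficient of $T^d$ in $Z_j(T)$. As a preliminary step, I would upgrade Proposition \ref{prop:algspint} to the equivariant setting: given an equivariant weak N\'eron model $\cY$ of $X(d)$ as a separated smooth algebraic space over $R(d)$ with good $\mu_d$-action, the identity
$$\int_{X(d)}|\omega(d)| \;=\; \sum_{C \in \pi_0(\cY_k)} [C]\,\LL^{-\ord_C \omega(d)}$$
should hold in $\mathcal{M}_k^{\widehat{\mu}}$. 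The proof is essentially the dilatation argument of Proposition \ref{prop:algspint} run $\mu_d$-equivariantly: partition $\cY_k$ into smooth $\mu_d$-stable subschemes using Proposition \ref{prop:nisnevich}, form the associated dilatations to obtain a $\mu_d$-equivariant smooth formal weak N\'eron model, and invoke the equivariant change-of-variables formula \cite[6.4]{hartmann}.

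Next, I would construct such a model $\cY_d$ from the Nisnevich data. For each $j$, the $\mu_d$-equivariant smoothening algorithm of \cite[3.4.2]{BLR} applied to $\cU_j\times_R R(d)$ produces an equivariant weak N\'eron model $\cV_j(d)$ of $(\cU_j)_K\times_K K(d)$. Using the canonical closed immersion $U_j\hookrightarrow (\cU_j)_k$ supplied by the isomorphism $\cU_j\times_{\cX}U_j \cong U_j$, let $\cW_j(d)\subset \cV_j(d)$ denote the preimage of $U_j$ under the natural map $\cV_j(d)\to (\cU_j)_k$. I define
$$\cY_d \;:=\; \bigsqcup_{j=1}^r \cW_j(d),$$
mapped to $\cX\times_R R(d)$ via the composition $\cW_j(d)\hookrightarrow \cV_j(d)\to \cU_j\times_R R(d)\to \cX\times_R R(d)$. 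The algebraic space $\cY_d$ is then an equivariant weak N\'eron model of $X(d)$: for any finite unramified extension $R'$ of $R(d)$, every $R'$-point of $X(d)$ specializes to a unique point of $\cX_k$ lying in exactly one $U_j$, and by the Nisnevich isomorphism it lifts uniquely to an $R'$-point of $\cU_j$ specializing to $U_j$, hence (by the weak N\'eron property of $\cV_j(d)$) to a unique $R'$-point of $\cW_j(d)$.

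Applying the equivariant version of Proposition \ref{prop:algspint} to $\cY_d$, the motivic integral splits as a sum over $j$:
$$\int_{X(d)}|\omega(d)| \;=\; \sum_{j=1}^r \sum_{C \in \pi_0((\cW_j(d))_k)} [C]\,\LL^{-\ord_C \omega(d)}.$$
By the very definition of $Z^{\widehat{\mu}}_{\cU_j,\omega}(T)$ in \cite[\S6.2]{BuNi} as a motivic generating series relative to $(\cU_j)_k$, each inner sum equals the image in $\mathcal{M}_k^{\widehat{\mu}}$ of the coefficient of $T^d$ in $Z^{\widehat{\mu}}_{\cU_j,\omega}(T)$ under base change to $U_j\subset (\cU_j)_k$ followed by the forgetful map, that is, the coefficient of $T^d$ in $Z_j(T)$. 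Summing over $d$ gives $Z_{X,\omega}(T)=Z_1(T)+\cdots+Z_r(T)$.

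The main obstacle will be the rigorous extension of equivariant motivic integration to algebraic-space models, together with the verification that the $\mu_d$-actions on the smoothenings $\cV_j(d)$ are compatible with the Nisnevich gluing and that the resulting class in $\mathcal{M}_k^{\widehat{\mu}}$ is intrinsic. Concretely, one must check that the $\mu_d$-equivariant smoothening is functorial with respect to \'etale morphisms, so that the classes $[(\cW_j(d))_k]$ over $U_j\subset \cX_k$ depend only on the strict henselization of $\cX\times_R R(d)$ along $U_j$. This is more an exercise in propagating the formal-scheme/dilatation machinery of Proposition \ref{prop:algspint} through the equivariant Grothendieck-ring formalism, coupled with careful bookkeeping of Nisnevich-local contributions, than a genuine conceptual new difficulty.
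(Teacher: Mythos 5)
Your plan is essentially a weak--N\'eron--model reformulation of the same underlying partition argument, but as written it conflates algebraic spaces with formal schemes in a way that does not go through. The object you call $\cW_j(d)\subset\cV_j(d)$, ``the preimage of $U_j$ under the natural map $\cV_j(d)\to(\cU_j)_k$,'' is not a subspace of $\cV_j(d)$: the only map in sight is $(\cV_j(d))_k\to(\cU_j)_k$, and the preimage of the locally closed stratum $U_j$ inside $(\cV_j(d))_k$ is neither open in $\cV_j(d)$ nor flat over $R(d)$. Consequently $\cY_d:=\bigsqcup_j\cW_j(d)$ cannot be a separated smooth algebraic space over $R(d)$ with generic fiber $X(d)$; it fails the defining requirement of a weak N\'eron model in the algebraic-space category, so your preliminary equivariant extension of Proposition \ref{prop:algspint} cannot be applied to it.

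The fix is exactly to replace $\cW_j(d)$ by the formal completion of $\cV_j(d)$ along the preimage of $U_j$ in $(\cV_j(d))_k$, so that $\cY_d$ becomes a \emph{formal} weak N\'eron model in the sense of Bosch--Schl\"oter (its rigid generic fibers form a partition of $X(d)^{\rig}$ into tubes). Once you do this you are forced back into the formal/rigid machinery of Loeser--Sebag and \cite{BuNi}, since the coefficient of $T^d$ in $Z^{\widehat\mu}_{\cU_j,\omega}(T)$ is \emph{defined} as an integral over the tube of $(\cU_j)_k$ --- and at that point the smoothenings $\cV_j(d)$ are an unnecessary detour. The paper's proof proceeds directly: the formal completion $\mX_j$ of $\cX$ along $U_j$ equals the completion of $\cU_j$ along $\cU_j\times_\cX U_j$ (the Nisnevich condition), the rigid spaces $(\mX_j)_\eta$ partition $X^{\rig}$, and $Z_j(T)$ is by definition the motivic zeta function of $((\mX_j)_\eta,\omega)$; additivity of the motivic measure on $X^{\rig}$ then gives the formula. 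I recommend dropping the weak--N\'eron construction entirely and arguing via that partition; the smoothening step adds work without adding content, and the \'etale-functoriality issue you flag disappears because you only need the formal completion, which is manifestly \'etale-local.
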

\begin{proof}
The proof is similar to the one of Proposition \ref{prop:algspint}. For every $j$ in $\{1,\ldots,r\}$, we denote by $\mX_j$ the formal completion of $\cX$ along the subscheme $U_j$. It is isomorphic to the completion of $\cU_j$ along $\cU_j\times_{\cX}U_j$. Thus $Z_j(T)$ is precisely the motivic zeta function of the pair $((\mX_j)_\eta,\omega)$, and the result follows from the fact that the rigid varieties  $(\mX_j)_\eta$ form a partition of $X^{\rig}$.
\end{proof}

 \end{document}